\newtheorem{prop}{Proposition}
\newtheorem{thm}{Theorem}
\newtheorem{rmk}{Remark}
\newtheorem{alg}{Algorithm}
\newcommand{\xx}{{\bf x}}
\newcommand{\vv}{{\bf v}}
\newcommand{\A}{{\bf A}}
\newcommand{\B}{{\bf B}}
\newcommand{\E}{{\bf E}}
\newcommand{\floor}[1]{\lfloor #1 \rfloor}
\begin{document}
	\title[]{ Efficient ensemble stochastic algorithms for agent-based models with spatial predator-prey dynamics }
	
\author[]{Giacomo Albi, Roberto Chignola \and Federica Ferrarese}
	
		\thanks{
			\\
			Giacomo Albi:
			Department of Computer Science, University of Verona, e-mail: giacomo.albi@univr.it.
			\\
			Roberto Chignola:
			Department of Biotechnology, University of Verona, e-mail: roberto.chignola@univr.it.
			\\
			Federica Ferrarese:
			Department of Mathematics, University of Trento, e-mail: federica.ferrarese@unitn.it.
			}

	\begin{abstract}
		Experiments in predator-prey systems show the emergence of long-term cycles. Deterministic model typically fails in capturing these behaviors, which emerge from the microscopic interplay of individual based dynamics and stochastic effects. However, simulating stochastic individual based models can be extremely demanding, especially when the sample size is large. Hence, we propose an alternative simulation approach, whose computation cost is lower than the one of the classic stochastic algorithms. First, we describe the agent-based model with predator-prey dynamics, and its mean-field approximation.Then, we provide a consistency result for the novel stochastic algorithm at the microscopic and mesoscopic scale.  Finally, we perform different numerical experiments in order to test the efficiency of the proposed algorithm, focusing also on the analysis of the different nature of oscillations between mean-field and stochastic simulations.
	\end{abstract}
	
		\maketitle
		
	\section{Introduction}\label{intro}
	The description of how biological populations interact and evolve in both space and time is central to theoretical ecology, \cite{friedman2017ecological, pielou1969introduction, maynard1974models, renshaw1993modelling, murray2007mathematical,black2012}.
	To this aim, several mathematical models have been proposed to approximate the evolution of large ensembles of interacting species, starting from microscopic stochastic agent-based dynamics, passing from mesoscopic equations, up to macroscopic systems, such as reaction-diffusion systems. This hierarchy of descriptions, at different scales, has been used extensively to model interactions among species and their spatial spread beyond theoretical ecology, and it has a profound impact in several fields such as epidemiology, \cite{boscheri2021modeling,webb1981reaction,allen2008asymptotic}, biology, physiology and medicine, \cite{carrillo2018zoology,shigesada1980spatial,roose2007mathematical}, 
	chemical reactions \cite{hellander2007hybrid,duncan2016hybrid}, and in socio-economic systems opinions, markets, veichular traffic and crowds \cite{pareschi2013interacting,cristiani2014multiscale,albi2019vehicular}.
	In the present manuscript, we will focus on the efficient simulation of predator-prey dynamics in population biology, such as \cite{murray2007mathematical,chen2016non,maynard1974models}.
	One striking feature of population biology is the emergence of temporal cycles of species densities, \cite{murray2007mathematical, nisbet2003modelling, berryman2002population}. Various mechanisms can cause populations to oscillate, the most investigated being the cyclic dynamics that arise from the interactions between predators and preys (see ref. \cite{Blasius2020} and references cited therein). 
	Traditionally, these interactions have been studied in the context of deterministic Lotka-Volterra-type models, \cite{murray2007mathematical, nisbet2003modelling, berryman2002population}. In their simplest form, these are coupled deterministic equations that include specific birth, death, competition and predation processes, and they can show limit cycles behaviors for appropriate choice of parameters values. Long-term persistence of predator-prey cycles have been recently observed in a series of well controlled microcosm experiments with freshwater organisms, \cite{Blasius2020}. Planktonic rotifers cultured together with their prey unicellular green algae showed oscillatory behaviors of both prey and predator densities that persisted for up to approximately 50 cycles or $\sim$300 generations, \cite{Blasius2020}. Interestingly, dominant dynamics characterized by coherent oscillations were interrupted by shorter episodes of irregular non-coherent oscillations, \cite{Blasius2020}. The experiments clearly demonstrate that sustained oscillations in population dynamics do arise even in simple well-controlled ecosystems. They strongly indicate that stochastic forces are at work to drive the reversible shift from coherent to non-coherent oscillatory behaviours of observed populations, \cite{Blasius2020}, but their role in driving population dynamics can not be investigated within the context of deterministic predator-prey models.
	
	This novel experimental evidence, therefore, calls for new theoretical explanations, and we note that these might be found within the theoretical framework developed by McKane and Newman (see \cite{mckane2004stochastic, mckane2005predator}). They studied individual based predator-prey models described by simple stochastic processes of mortality, reproduction and predation, and showed that large cycles of species densities persisted unless the number of individuals was taken to be strictly infinite, \cite{mckane2004stochastic, mckane2005predator}. Biological cycles have been found to arise from a novel resonance effect that include the statistical fluctuations of a given finite-size population, an effect that fade away when different realization of the model are averaged to recover the mean-field behavior described by the classic deterministic predator-prey equations, \cite{mckane2004stochastic, mckane2005predator}. 
	

	Finding the solution of stochastic individual-based models, however, can easily become computationally challenging, above all when the final goal is to investigate the dynamics of large, albeit finite, population sizes over hundreds of species generations. Moreover, depending on the context of application, ad-hoc methods are of paramount importance to capture the essential features of phenomena at various scale, see for example \cite{flegg2015convergence,hellander2017mesoscopic,engblom2009simulation,spill2015hybrid}.
	
	In the present work, to cope with the increasing complexity of the model we rely on Monte Carlo methods, introducing a novel stochastic algorithm for the simulation of the population dynamics with individual based models. We investigate the efficiency of the new method on spatial predator-prey models and we compare the computational costs, also by exploring in part the space of parameters, to that of different stochastic algorithms such as the classic Monte Carlo, the direct method or the $\tau-$leaping approach considered as benchmarks \cite{marchetti2017simulation,gillespie1976general,caflisch1998monte}.

	The fundamental idea behind our work is that with classic approaches the whole population sample must be reconstructed any time interaction between individuals occurs with the updated number of individuals.  However, we show that there exists a maximum number of interactions that can take place before updating the sample, and this allows us to reduce the total number of time steps required for the whole simulation. As the consequence, the new algorithm has a computational cost that is up to 2 orders of magnitude less than those of the classic stochastic algorithms considered for a wide range of parameter values. The simulations obtained with the new algorithm converge to the mean-field solutions for increasing sample size $N$ with error $\propto 1/\sqrt N$, as also observed for classic algorithms, while individual realizations for finite population sizes oscillates as observed by McKane and Newman.
	
	In Section \ref{section2} we present an individual-based stochastic model where predators compete with preys and migration of individuals is allowed. We derive the master equation and the mean-field equations which are obtained when the number of individuals is taken to be strictly infinite.  In Section \ref{section3} we present a novel ensemble stochastic algorithm and we prove its consistency with the classical formulation. In Section \ref{section4} we show different numerical experiments aimed at testing the new algorithm and at comparing it with the classical approaches. We also partly investigate the stochastic persistency of the oscillatory behavior shown by both predator and prey dynamics. Overall our approach can be exploited to efficiently simulate stochastic agent-based models and thus to explore the emergence of long-lasting persistent resonant effects in population dynamics. In  \ref{app:homo} we show how to reduce the model to the homogeneous case, assuming that individuals are not allowed to migrate. In  \ref{sec:algH} we briefly report some of the classical exact and approximated stochastic algorithms and we describe them in the homogeneous case.

	\section{Agent-based models with predator-prey interactions }\label{section2}
	We consider an agent based stochastic model of predation between two species with migration. We will show that for considerably large number of individuals the evolution of this model reduces to  a system of partial differential equations.
	
	\subsection{Spatially heterogeneous predator-prey model}\label{sec:NH}
We consider a stochastic process describing the evolution of individuals distributed in the spatial domain $\Omega = [0,L]$ with $L>0$, divided in $M_c$ spatial cells.  We assume that every cell $C_{\ell}$, with $\ell=1,\ldots,M_c$, has $N_c$ components, where each component is realized in one of the following states: predator ($s_{A,\ell}$), prey ($s_{B,\ell}$), or  empty $(s_{E,\ell})$.
		To describe the stochastic dynamics, we consider cell $C_\ell$ and its nearest cells $C_{\ell\pm1}$, for a fixed time $t>0$ we define different interaction events. 
		\begin{itemize}	
			\item {\em Competition \& Birth events.} We sample a component in cell $C_{\ell}$ and with probability $q_1\in[0,1]$ we let it interacts with another component, randomly chosen among the other $N_c-1$ components in cell $C_{\ell}$, according to the following birth and competition rules with rates $b^r,p^r_1,p^r_2$
			\begin{equation} \label{eq:5a} \begin{split}
					&s_{B,\ell}s_{E,\ell} \xrightarrow{b^r} s_{B,\ell}s_{B,\ell},\quad s_{A,\ell}s_{B,\ell} \xrightarrow{p^r_{1}} s_{A,\ell}s_{A,\ell}, \quad  s_{A,\ell}s_{B,\ell} \xrightarrow{p^r_{2}} s_{A,\ell}s_{E\ell},
			\end{split} \end{equation}	
			where $b^r,p^r_1,p^r_2 >0$ are constant parameters.
			We assume that if interactions occur among two empty components then no change in the populations sizes is accounted. 
			\item {\em Migration event.} We sample one component in cell $C_\ell$ and we assume that  with probability $q_2\in[0,1]$ such that $q_1+q_2 \leq 1$ it interacts with another component, randomly chosen among the $N_c$ positions in one of the nearest cells $C_{\ell \pm 1}$. Changes in the state happen according to migration rates $m^r_1,m^r_2$
			\begin{equation} \label{eq:5b} \begin{split}
					&s_{A,\ell}s_{E,\ell\pm 1} \xrightarrow{m^r_1 } s_{E,\ell} s_{A,\ell\pm 1},\quad s_{B,\ell}s_{E,\ell\pm 1}, \xrightarrow{m^r_2} s_{E,\ell} s_{B,\ell\pm 1},\\
					& s_{A,\ell\pm 1}s_{E,\ell} \xrightarrow{m^r_1} s_{E,\ell\pm 1} s_{A,\ell},\quad s_{B,\ell \pm 1}s_{E,\ell} \xrightarrow{m^r_2} s_{E,\ell \pm 1 } s_{B,\ell},
			\end{split} \end{equation}
			where $m^r_1,m^r_2>0$.
			If interactions occur among predator/prey and empty components or among two empty components then no changes in the populations sizes happen. 
			\item {\em Death event.} We sample a component in cell $C_\ell$, and we assume that with probability $(1-q_1-q_2)$ it changes according to death rates $d^r_1, d^r_2$
			\begin{equation} \label{eq:5c} \begin{split}
					s_{A,\ell}\xrightarrow{d^r_1}s_{E,\ell}, \quad s_{B,\ell}\xrightarrow{d^r_2} s_{E,\ell},
			\end{split} \end{equation}
			where $d_1^r, d_2^r >0$. 
			We suppose that if the selected component is empty then no change is accounted.
		\end{itemize}
		We assume that at most one birth/competition, one migration and one death event can occur at each time step.  
		In order to introduce a master equation for such process, we consider the vector state $\xx=(\A,\B,\E)\in \mathbb{R}^{3M_c}$, where $\A=(A_1,A_2,\ldots,A_{M_c})$, $\B=(B_1,B_2,\ldots,B_{M_c})$, $\E=(E_1,E_2,\ldots,E_{M_c})$ account respectively the number of predators, prey and empty spaces in each cell.
		We first define the stoichiometry matrices $\hat V$, associated to single competition/birth and death events, and  $\hat{V}_M$, associated to migration events
		\begin{equation*}
			\hat{V}=\begin{bmatrix}
				0 &  1 & -1 \\
				1 & -1 & 0 \\
				0 & -1 & 1 \\
				-1 & 0 & 1 \\
				0 & -1 & 1
			\end{bmatrix}, \quad
			\hat{V}_M=\begin{bmatrix}
				1 &  0 & -1 \\
				-1 & 0 & 1 \\
				0 & 1 & -1 \\
				0 & -1 & 1 \\
			\end{bmatrix}.
		\end{equation*}
		Hence, the stoichiometry matrix of the full process over the lattice of $M_c$ cells is defined as follows,
		\begin{equation}\label{eq:stoicNH}
			\tilde{V}= \begin{bmatrix}
				\hat{V} \otimes I \\
				\hat{V}_M \otimes M_- \\
				\hat{V}_M \otimes M_+
			\end{bmatrix},	
		\end{equation}
		where $\otimes$ denotes the Kronecker product, and $I$ is the identity matrix of size $M_c\times M_c$, and $M_-,M_+$ are squared	are square matrices of size $M_c\times M_c$ defined as
		\begin{equation*}
			M_-= \begin{bmatrix}
				0 & 0 & \ldots & 0 & 0 \\
				1 & -1 & 0 &  & \vdots \\
				0 & 1 & \ddots & \ddots & 0 \\
				\vdots & \ddots & \ddots & -1 & 0 \\
				0 & \ldots & 0 & 1 & -1
			\end{bmatrix},\quad
			M_+= \begin{bmatrix}
				-1 & 1 & 0 & \ldots & 0 \\
				0 & -1 & 1 & \ddots & \vdots \\
				0 & 0 & \ddots & \ddots & 0 \\
				\vdots & \ddots & \ddots & -1 & 0 \\
				0 & \ldots & 0 & 0 & 0
			\end{bmatrix}.
		\end{equation*}
		Here, each row of the stoichiometry matrix \eqref{eq:stoicNH} represents the changes in the populations sizes due to the occurrence of the events described in \eqref{eq:5a}-\eqref{eq:5b}-\eqref{eq:5c} and each column represents the predators, preys and empty components in each cell.	
			Then, for any $\ell = 1,\ldots, M_c$, we write the associated transition rates as follows		
			\begin{equation}\label{eq:transition}
				\begin{split}
					& \pi_{\vv_{\ell_{0}}}(\xx)= 2 b^r q_1 \frac{B_\ell}{N_c} \frac{E_\ell}{N_c-1}, \qquad \pi_{\vv_{\ell_{1}}}(\xx) = 2 p_1^r q_1 \frac{A_\ell}{N_c} \frac{B_\ell}{N_c-1},\cr &\pi_{\vv_{\ell_{2}}}(\xx)= 2 p_2^r q_1 \frac{A_\ell}{N_c} \frac{B_\ell}{N_c-1}, \qquad \pi_{\vv_{\ell_{3}}}(\xx) =  d_1^r (1-q_1-q_2) \frac{A_\ell}{N_c},\cr 
					&\pi_{\vv_{\ell_{4}}}(\xx)=  d_2^r (1-q_1-q_2) \frac{B_\ell}{N_c},\quad \pi_{\vv_{\ell_{5}}}(\xx) = m_1^r q_2 \frac{A_{\ell}}{N_c} \frac{E_{\ell-1}}{N_c},\cr
					&\pi_{\vv_{\ell_{6}}}(\xx) = m_1^r q_2 \frac{A_{\ell-1}}{N_c} \frac{E_{\ell}}{N_c},\quad \qquad \pi_{\vv_{\ell_{7}}}(\xx)= m_2^r q_2 \frac{B_{\ell}}{N_c} \frac{E_{\ell-1}}{N_c},\cr &\pi_{\vv_{\ell_{8}}}(\xx)= m_2^r q_2 \frac{B_{\ell-1}}{N_c} \frac{E_{\ell}}{N_c},\quad \qquad \pi_{\vv_{\ell_{9}}}(\xx) = m_1^r q_2 \frac{A_{\ell}}{N_c} \frac{E_{\ell+1}}{N_c},\cr
					& \pi_{\vv_{\ell_{10}}}(\xx) = m_1^r q_2 \frac{A_{\ell+1}}{N_c} \frac{E_{\ell}}{N_c},\quad \qquad\pi_{\vv_{\ell_{11}}}(\xx) = m_2^r q_2 \frac{B_{\ell}}{N_c} \frac{E_{\ell+1}}{N_c},\cr &\pi_{\vv_{\ell_{12}}}(\xx) = m_2^r q_2 \frac{B_{\ell+1}}{N_c} \frac{E_{\ell}}{N_c},
				\end{split}
			\end{equation}
			where  we define the operators $\pi_{\vv_{\ell_{j}}}(\xx)=\pi(\xx+\vv_{\ell_j}|\xx)$ for any $j= 0,\ldots, M$. Here, $M+1=13$ is the total number of events described in \eqref{eq:5a}-\eqref{eq:5b}-\eqref{eq:5c} and $\vv_{\ell_j}$ is the $\ell_j$-th row of the stoichiometry matrix $\tilde{V}$ defined  in \eqref{eq:stoicNH}, $\ell_j=\ell+jM_c$ for $j=0,\ldots,M$.
		We assume that migrations events are not allowed at the boundaries. Hence we introduce $A_{0},E_0,B_0,A_{M_c+1},E_{M_c+1},B_{M_c+1}$ to be equal to zero, in order to properly define the transition rates associated to the boundary cells. 
	The density $P(\xx,t)$ describing the probability of the state $\xx$ evolves according to the master equation as follows
	\begin{equation} \label{eq:3b}
		\frac{dP(\xx,t)}{dt}= \sum_{\ell_j\in \mathcal{J}}\Biggl[ \pi(\xx|{\bf x}-{\bf v}_{\ell_j}) P({\bf x}-{\bf v}_{\ell_j},t)-   \pi({\bf x}+{\bf v}_{\ell_j}|\xx)P({\bf x},t)\Biggr],
	\end{equation}  
	where \begin{equation}\label{eq: setJ}
		\mathcal{J}=\{\ell+jM_c|j=0,\ldots,M \text{ and  }\ell=1,\ldots,M_c\}.
	\end{equation}

		\begin{rmk}~ \begin{itemize}
				\item We assume that each spatial cell contains at most $N_c$ components that can be either preys or predators or empty spaces. Hence, even if predators and preys are restricted to move only to empty neighbor sites, the model can include regions occupied both by predators and preys. 
				In the case of a lattice consisting of a single spatial cell, we refer to  spatial homogeneous model, where migration events are neglected and maximal density per cell is $N_c = N$.  We report in \ref{app:homo} the corresponding agent-based dynamics, and the associated master equation \eqref{eq:3b}.
				\item 	We observe that the vector state $\xx$ can be dimensionaly reduced if we assume to remove the empty components, considering the relation $\E=N_c-\A-\B$. However, we prefer to give a more general description, to allow for further generalization, such as models that may include empty sites with different nature.				
			\end{itemize}
		\end{rmk}
	
	\subsection{Mean-field approximation}
In order to recover the mean-field behavior of the stochastic process,
		we introduce the empirical densities $f_\ell^{N_c}(t),g_\ell^{N_c}(t)$, for $\ell=1,\ldots, M_c$, as the averaged quantities
	\begin{equation}\label{eq:empirical}
		\begin{aligned}
			f_\ell^{N_c}(t)=\frac{\left  \langle A_\ell \right \rangle}{N_c} = \frac{1}{N_c} \sum_{A_\ell=0}^{N_c}\sum_{B_\ell=0}^{N_c}\sum_{E_\ell=0}^{N_c} A_\ell P(\xx,t),\cr
			g_\ell^{N_c}(t)=\frac{\left  \langle B_\ell \right \rangle}{N_c} = \frac{1}{N_c} \sum_{A_\ell=0}^{N_c}\sum_{B_\ell=0}^{N_c}\sum_{E_\ell=0}^{N_c} B_\ell P(\xx,t),
		\end{aligned}
	\end{equation}
	where $\langle\cdot\rangle$ denotes the expected value,
and the empirical density for the empty component can be recovered in each cell computing $1-f_\ell^{N_c}(t)-g_\ell^{N_c}(t)$.
	
	Hence, multiplying the master equation by $A_\ell$ and by $B_\ell$, and summing over all the values of $A_\ell$, $B_\ell$ and $E_\ell$ for any $\ell=1\,\ldots,M_c$ leads to the following Proposition. 
	\begin{prop}\label{prop:1b} By standard assumptions of the mean-field limit we assume that  for any $\ell=1\,\ldots,M_c$, $\left\langle A_\ell B_\ell \right\rangle = \left\langle A_\ell\right\rangle \left\langle B_\ell\right\rangle $, $\left\langle A_\ell^2\right\rangle = \left\langle A_\ell\right\rangle^2$, $\left\langle B_\ell^2\right\rangle = \left\langle B_\ell \right\rangle^2$, $\left\langle A_{\ell \pm 1} B_\ell\right\rangle = \left\langle A_{\ell \pm 1} \right\rangle \left\langle B_\ell \right\rangle$ and $\left\langle  A_\ell B_{\ell \pm 1}\right\rangle = \left\langle A_\ell \right\rangle \left\langle B_{\ell \pm 1} \right\rangle$ then the time evolution of the empirical population densities $f_\ell^{N_c}$, $g_\ell^{N_c}$ is given by
		\begin{equation}\label{eq:A3b}
			\begin{split}
				\frac{d f_\ell^{N_c}}{d \tau} =& 2\tilde{p^r_1}  \frac{ \langle A_\ell \rangle }{N_c} \frac{\langle B_\ell \rangle }{N_c-1} - \tilde{d^r_1}  \frac{\langle A_\ell \rangle }{N_c}\cr 
				&\qquad\qquad\qquad+ \tilde{m}^r_1\left( \frac{	\Delta_\epsilon\langle A_\ell \rangle }{N_c} +  \frac{\langle A_\ell \rangle }{N_c}  \frac{	\Delta_\epsilon\langle B_\ell \rangle }{N_c}  -\frac{\langle B_\ell \rangle }{N_c}  \frac{	\Delta_\epsilon\langle A_\ell \rangle }{N_c} \right),
				\\
				\frac{d g_\ell^{N_c}}{d \tau}=& r \frac{\langle B_\ell \rangle }{N_c}   \left( 1-\frac{\langle B_\ell \rangle }{q N_c}\right)-\alpha \frac{\langle A_\ell \rangle }{N_c} \frac{\langle B_\ell \rangle }{N_c}    \cr
				&	\qquad\qquad\qquad+ \tilde{m}^r_2\left( \frac{	\Delta_\epsilon\langle B_\ell \rangle }{N_c} +  \frac{\langle B_\ell \rangle }{N_c}  \frac{	\Delta_\epsilon\langle A_\ell \rangle }{N_c}  -\frac{\langle A_\ell \rangle }{N_c}  \frac{	\Delta_\epsilon\langle B_\ell \rangle }{N_c}\right),
			\end{split}
		\end{equation}
		where\[
		\Delta_\epsilon h_\ell = \sum_{s\in \{ \ell-1,\ell+ 1\}} \frac{h_s - h_\ell}{\epsilon^2},
		\]
		for any function $h$ is the discrete Laplace operator, $\epsilon$ is the lattice spacing, $\tau = t/N_c$, the parameters are defined according to the following scaling
		\begin{equation}
			\begin{split}
				&\tilde{b}^r=b^r q_1 \quad \tilde{p_1}^r= p^r_1 q_1 \quad \tilde{p_2}^r= p^r_2 q_1 \quad \tilde{d^r_1}=(1-q_1-q_2) d^r_1 \\
				& \tilde{d_2}^r=(1-q_1-q_2) d^r_2 \quad \tilde{m}^r_1=q_2 \epsilon^{-2} m^r_1 \quad \tilde{m}^r_2=q_2 \epsilon^{-2} m^r_2,
			\end{split}
		\end{equation}
		and 	\[
		r=2\tilde{b}^r-\tilde{d_2}^r,\qquad q=1-\frac{\tilde{d_2}^r}{2\tilde{b}^r},\qquad \alpha = 2(\tilde{p}^r_1+\tilde{p}^r_2+\tilde{b}^r).
		\]
		
	\end{prop}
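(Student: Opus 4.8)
The plan is to obtain \eqref{eq:A3b} as evolution equations for the first moments $\langle A_\ell\rangle$ and $\langle B_\ell\rangle$ read off from the master equation \eqref{eq:3b}, then close them with the stated factorization hypotheses and rescale. First I would multiply \eqref{eq:3b} by a generic observable $\phi(\xx)$ and sum over all states; shifting $\xx\mapsto\xx+\vv_{\ell_j}$ in the gain term turns the right-hand side into the generator form
\[
\frac{d\langle\phi\rangle}{dt}=\sum_{\ell_j\in\mathcal{J}}\left\langle\left[\phi(\xx+\vv_{\ell_j})-\phi(\xx)\right]\pi_{\vv_{\ell_j}}(\xx)\right\rangle .
\]
Choosing $\phi=A_\ell$ and $\phi=B_\ell$, the increment $\phi(\xx+\vv_{\ell_j})-\phi(\xx)$ is just the relevant entry ($0$ or $\pm1$) of the stoichiometric row $\vv_{\ell_j}$, so only a handful of events survive for each cell.

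Reading the rows of $\tilde V$ in \eqref{eq:stoicNH} against the rates \eqref{eq:transition}, for $\langle A_\ell\rangle$ I would keep the type-one predation gain ($j=1$), the predator death loss ($j=3$) and the four predator-migration exchanges with $\ell\pm1$ ($j=5,6,9,10$); for $\langle B_\ell\rangle$ the birth gain ($j=0$), the two predation losses ($j=1,2$), the prey death loss ($j=4$) and the four prey-migration exchanges ($j=7,8,11,12$). This yields two exact moment equations whose right-hand sides contain the correlations $\langle A_\ell B_\ell\rangle$, $\langle B_\ell E_\ell\rangle$, $\langle A_{\ell\pm1}E_\ell\rangle$ and the like. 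I would then impose the closure relations of the statement, replacing every such correlation by the product of the corresponding means, and eliminate the empty component through $\langle E_\ell\rangle=N_c-\langle A_\ell\rangle-\langle B_\ell\rangle$.

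For the prey equation the local (non-migration) part then reads $2\tilde b^r\langle B_\ell\rangle\langle E_\ell\rangle/[N_c(N_c-1)]-2(\tilde p_1^r+\tilde p_2^r)\langle A_\ell\rangle\langle B_\ell\rangle/[N_c(N_c-1)]-\tilde d_2^r\langle B_\ell\rangle/N_c$; collecting the terms linear and quadratic in $\langle B_\ell\rangle$ produces the logistic block with net rate $r=2\tilde b^r-\tilde d_2^r$ and carrying-capacity parameter $q=1-\tilde d_2^r/(2\tilde b^r)$ (the identity $r/q=2\tilde b^r$ fixes the quadratic coefficient, with the clean form of \eqref{eq:A3b} invoking the large-$N_c$ identification $N_c-1\approx N_c$ where needed), while the birth cross-term merges with the two predation terms into $\alpha=2(\tilde p_1^r+\tilde p_2^r+\tilde b^r)$. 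The predator equation requires no regrouping once $\tilde p_1^r$ and $\tilde d_1^r$ are identified.

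The step I expect to be the real obstacle is the reorganization of the migration terms into the cross-diffusion form. Each migration is an exchange $s_X s_E\to s_E s_X$, so after closure its net contribution to cell $\ell$ is a sum over $s\in\{\ell-1,\ell+1\}$ of differences $\langle X_s\rangle\langle E_\ell\rangle-\langle X_\ell\rangle\langle E_s\rangle$; substituting $\langle E\rangle=N_c-\langle A\rangle-\langle B\rangle$, the $\langle X\rangle\langle X\rangle$ self-interactions cancel pairwise, the linear remainder assembles into $\Delta_\epsilon\langle A_\ell\rangle$, and the mixed remainder assembles into the antisymmetric combination $\langle A_\ell\rangle\Delta_\epsilon\langle B_\ell\rangle-\langle B_\ell\rangle\Delta_\epsilon\langle A_\ell\rangle$ of \eqref{eq:A3b}. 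The delicate points are to enumerate the in- and out-fluxes across both neighbours without double counting — the boundary-vanishing rows of $M_\pm$ in $\tilde V$ are arranged exactly so that each oriented hop is accounted once — and to carry the cancellation through so that only the discrete Laplacian and the cross term remain. Finally I would pass to $\tau=t/N_c$, so that the factor $N_c$ from $d/d\tau=N_c\,d/dt$ cancels the normalization $1/N_c$ of $f_\ell^{N_c},g_\ell^{N_c}$ in \eqref{eq:empirical}, and insert the rescalings $\tilde b^r=b^rq_1$, $\tilde p_i^r=p_i^rq_1$, $\tilde d_i^r=(1-q_1-q_2)d_i^r$, $\tilde m_i^r=q_2\epsilon^{-2}m_i^r$, the diffusive scaling of $\tilde m_i^r$ combining with the lattice spacing carried by $\Delta_\epsilon$ to deliver \eqref{eq:A3b}.
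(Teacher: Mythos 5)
Your proposal is correct and follows essentially the same route the paper intends: multiplying the master equation \eqref{eq:3b} by $A_\ell$ and $B_\ell$, summing over states to get the first-moment (generator) equations, closing the correlations with the stated factorization hypotheses, eliminating $\langle E_\ell\rangle = N_c-\langle A_\ell\rangle-\langle B_\ell\rangle$, and regrouping into the logistic, predation and cross-diffusion blocks with $r$, $q$, $\alpha$ and the rescaled time $\tau=t/N_c$ --- the paper only sketches this and defers details to the cited references. Your identification of the contributing stoichiometric rows, the cancellation of the self-interaction terms in the migration fluxes, the identity $r/q=2\tilde b^r$ fixing the quadratic coefficient, and the assembly of $\langle A_\ell\rangle\Delta_\epsilon\langle B_\ell\rangle-\langle B_\ell\rangle\Delta_\epsilon\langle A_\ell\rangle$ all match the derivation the proposition rests on.
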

	
	Finally  the mean-field behavior is recovered for $N_c\gg1$ as the result of the following Theorem.
	\begin{thm}
		Consider the discrete mean-field model \eqref{eq:A3b} for $N_c$ individuals. Then taking the limit for $N_c\to \infty$, $\epsilon \to 0$,  the mean-field equations for the densities $f(x,t)$ and $g(x,t)$,
		\begin{equation}\label{eq:6}
			\begin{aligned}
				&{\partial_\tau f}=2 \tilde{p}_1 f g -\tilde{d}_1 f + \tilde{m}_1 (f \Delta g+ (1-g) \Delta f),\\
				&{\partial_\tau g}=r g \left( 1-\frac{g}{q} \right) -\alpha f g +\tilde{m}_2 (g \Delta f +(1-f) \Delta g),
			\end{aligned}
		\end{equation}
		where each entry of the vectors $f({\bf x },t)$ and $g({\bf x},t)$  represents the predators and preys densities in each cell $C_{\ell}$, $\ell=1,\ldots, M_c$ and \[
		\Delta h(x,t) = \lim_{\epsilon \to 0} {\Delta_\epsilon}h
		\] for any function h.
	\end{thm}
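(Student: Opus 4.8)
The plan is to pass to the limit directly in the discrete mean-field system \eqref{eq:A3b}, working term by term after rewriting everything in terms of the densities $f_\ell^{N_c}=\langle A_\ell\rangle/N_c$ and $g_\ell^{N_c}=\langle B_\ell\rangle/N_c$. Two essentially independent mechanisms are at play: the limit $N_c\to\infty$ acts on the local reaction terms (birth, predation, death), while the limit $\epsilon\to 0$ acts on the migration terms through the discrete Laplacian. I would first carry out $N_c\to\infty$ to obtain an $\epsilon$-dependent semi-discrete system in the exact densities, and then let $\epsilon\to 0$ to reach the continuum system; alternatively both limits can be taken jointly once the relevant quantities are bounded uniformly.

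For the reaction terms the only genuine $N_c$ dependence sits in the combinatorial factor $N_c-1$. Writing $\frac{\langle B_\ell\rangle}{N_c-1}=\frac{N_c}{N_c-1}\,g_\ell^{N_c}$ and using $\frac{N_c}{N_c-1}\to 1$, the birth/predation contribution $2\tilde p_1^r f_\ell^{N_c}\frac{N_c}{N_c-1}g_\ell^{N_c}$ converges to $2\tilde p_1 fg$, while $-\tilde d_1^r f_\ell^{N_c}\to -\tilde d_1 f$. The prey equation is already written in density form, so its local part $r g_\ell^{N_c}(1-g_\ell^{N_c}/q)-\alpha f_\ell^{N_c}g_\ell^{N_c}$ passes to the limit by continuity, giving $rg(1-g/q)-\alpha fg$. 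This reproduces the reaction parts of \eqref{eq:6}.

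For the migration terms I would rewrite the increments on $\langle A_\ell\rangle$ and $\langle B_\ell\rangle$ as increments on the densities, so that the bracket in \eqref{eq:A3b} reads $\tilde m_1^r\big(\Delta_\epsilon f_\ell^{N_c}+f_\ell^{N_c}\Delta_\epsilon g_\ell^{N_c}-g_\ell^{N_c}\Delta_\epsilon f_\ell^{N_c}\big)$. Interpreting $f_\ell^{N_c},g_\ell^{N_c}$ as the nodal values at $x_\ell=\ell\epsilon$ of continuous profiles and Taylor expanding gives $\Delta_\epsilon h_\ell=\epsilon^{-2}(h_{\ell-1}+h_{\ell+1}-2h_\ell)\to\partial_{xx}h=\Delta h$, which is exactly the definition $\Delta h=\lim_{\epsilon\to0}\Delta_\epsilon h$ used in the statement. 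Under the diffusive scaling $\tilde m_i^r=q_2\epsilon^{-2}m_i^r$ one identifies a finite limit $\tilde m_i=\lim_{\epsilon\to0}\tilde m_i^r$ (the bare rates being taken of order $\epsilon^2$), so the products $\tilde m_i^r\Delta_\epsilon h_\ell\to\tilde m_i\Delta h$ converge. The elementary identity $\Delta f-g\,\Delta f=(1-g)\Delta f$ then turns the bracket into the cross-diffusion form $\tilde m_1\big(f\Delta g+(1-g)\Delta f\big)$, and symmetrically (via $\Delta g-f\,\Delta g=(1-f)\Delta g$) for $g$; the zero-padding $A_0=A_{M_c+1}=\cdots=0$ imposed at the boundary becomes, in the limit, homogeneous Neumann (no-flux) boundary conditions for $f$ and $g$.

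The routine part is thus purely algebraic, and the main difficulty is to upgrade this formal computation to a rigorous double limit. One must (i) interpolate the lattice data $\{f_\ell^{N_c}\}$, $\{g_\ell^{N_c}\}$ to functions on $\Omega$ and secure enough regularity — for instance uniform (in $N_c,\epsilon$) $C^2$ or $H^2$ bounds obtained from a priori energy/entropy estimates for \eqref{eq:A3b} — to guarantee both the pointwise limit $\Delta_\epsilon f_\ell^{N_c}\to\Delta f$ and the convergence of the nonlinear products $f_\ell^{N_c}\Delta_\epsilon g_\ell^{N_c}$; and (ii) control the interplay of the two scalings, ensuring that the $O(1/N_c)$ corrections discarded in the mean-field factorization of Proposition \ref{prop:1b} are negligible on the diffusive scale $\epsilon^{-2}$, i.e. that $N_c\to\infty$ sufficiently fast relative to $\epsilon\to0$. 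With such uniform bounds and a compactness argument (Arzel\`a--Ascoli, or weak-$H^1$ convergence) in hand, the term-by-term passage to the limit yields \eqref{eq:6}. I expect step (ii), the joint control of the mesoscopic ($N_c$) and spatial ($\epsilon$) scales, to be the genuine obstacle, the remaining manipulations being elementary.
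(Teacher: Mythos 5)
Your proposal is correct, and it is essentially the argument the paper implicitly relies on: the paper itself gives no proof of this theorem, deferring instead to \cite{mckane2004stochastic, mckane2005predator, van1992stochastic}, and your term-by-term formal passage to the limit --- $\frac{N_c}{N_c-1}\to 1$ for the birth/predation terms, the prey reaction part passing through by continuity, and $\Delta_\epsilon h\to\Delta h$ under the diffusive scaling of the migration rates, followed by the algebraic regrouping into $f\Delta g+(1-g)\Delta f$ and $g\Delta f+(1-f)\Delta g$ --- is precisely the standard van Kampen-type computation carried out in those references. Your final paragraph on what a rigorous double limit would require (uniform regularity of the interpolated lattice densities, compactness, and control of the $O(1/N_c)$ moment-closure error of Proposition \ref{prop:1b} against the $\epsilon^{-2}$ diffusive scale) correctly identifies the genuine analytical obstacle, but this goes beyond what the paper or its cited references establish; at the level of rigor intended here, the formal computation in your first three paragraphs already constitutes the proof.
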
	
	We refer to \cite{mckane2004stochastic, mckane2005predator, van1992stochastic}, for a detailed proof of the mean-field limit.

		\begin{rmk}~
			Note that under condition $(1-q_1-q_2) d^r_2<(2q_1 b^r)$ model \eqref{eq:6} corresponds to the well known Lotka-Volterra equations with logistic growth term and diffusion, \cite{murray2007mathematical}. 
	\end{rmk}
	\section{Efficient ensemble stochastic algorithms}\label{section3}
	The dynamics described by model \eqref{eq:6} can be properly simulated with stochastic algorithms. Exact stochastic algorithms predict which is the next firing event and at which time it will fire, as for example in \cite{gillespie1976general}.  However, a direct implementation is often prohibitively expensive since the final goal is the simulation of a large stochastic process. On the other hand, classic approximated algorithms, such as the Monte Carlo algorithm and the $\tau$-leaping method, can speed up the simulations but their efficiency and accuracy are strongly related to the choice of the time step that can vary in time or be inversely proportional to the sample size. The main idea of the procedure is to fix a constant time step, and to allow multiple events to happen at the same time. Hence, we describe the associated stochastic process for the spatial heterogeneous predator-prey dynamics and show its consistency with the previous description. Second we describe the ensemble stochastic algorithm in the spatial homogeneous and heterogeneous setting.
	\subsection{Predator-prey model with ensemble interactions}\label{section3.2}
	At each time step $t\geq0$ consider two fractions $q_1,q_2\in[0,1]$ such that $q_1+q_2 \leq 1$. Assume that the following events can occur.
	\begin{itemize}
		\item {\em Competition \& Birth}. Sample $\floor{q_1N_c}$ components in each cell $C_\ell$, $\ell=1,\ldots,M_c$ assuming that each one interacts with another component, randomly chosen among the $N_c$ positions in cell $C_\ell$ without repetition, according to the rules described in \eqref{eq:5a}. 
		\item {\em Migration}. Sample other $\floor{q_2 N_c}$ components in any cell $C_\ell$, different from the ones previously selected, and let each one to interact with another state, randomly chosen in cell $C_{\ell \pm 1}$ without repetition, according to the migration rules defined in \eqref{eq:5b}. 
		\item {\em Death}. The remaining $N_c -\floor{q_1 N_c} -\floor{q_2 N_c}$ components in any cell $C_\ell$ change according to death rules defined in equation \eqref{eq:5c}. 
	\end{itemize}
	Hence, in any infinitesimal time interval $[t,t+d\tau_{N_c}]$, where $d\tau_{N_c} := dt N_c$, a transition can occur from the state with $\xx=(\A,\B,\E)$ individuals to the states with $\xx+k\vv_{\ell_j}$  individuals for any $k=1,\ldots,N_c$, where $\vv_{\ell_j}$ is the $\ell_j$-th row of the stoichiometry matrix $\tilde{V}$ defined in \eqref{eq:stoicNH} for any $\ell=1,\ldots, M_c$, $j=0,\ldots,M$. For $k = 1,\ldots N$, the transition rates write as follows
		\begin{equation}\label{eq:A6}
			\pi(\xx+k\vv_{\ell_{j}}|\xx) = \prod_{i=1}^{k} \pi^i_{\vv_{\ell_{j}}}(\xx)
		\end{equation} where
		\begin{equation}\label{eq:A0a}
			\begin{split}
				&\pi_{\vv_{\ell_{0}}}^i(\xx) = 2 
				b^r q_1 \frac{ \tilde{B}_\ell^i}{ \tilde{N_c}^i}\frac{ \tilde{E}_\ell^i}{\tilde{N}_c^i},~~\quad \qquad \pi_{\vv_{\ell_{1}}}^i(\xx) = 2 
				p_1^r q_1 \frac{ \tilde{A}_\ell^i}{ \tilde{N}_c^i}\frac{ \tilde{B}_\ell^i}{\tilde{N}_c^i},\\
				&\pi_{\vv_{\ell_{2}}}^i(\xx) = 2 
				p_2^r q_1  \frac{\tilde{A}_\ell^i}{\tilde{N}_c^i}\frac{ \tilde{B}_\ell^i}{\tilde{N}_c^i},~~\quad \qquad
				\pi_{\vv_{\ell_{3}}}^i(\xx) =  
				d_1^r (1-q_1-q_2) \frac{\tilde{A}_\ell^i}{\tilde{N}_c^i},\\
				&\pi_{\vv_{\ell_{4}}}^i(\xx) =  
				d_2^r (1-q_1-q_2) \frac{ \tilde{B}_\ell^i}{\tilde{N}_c^i}, \quad
				\pi_{\vv_{\ell_{5}}}^i(\xx) =  
				m_1^r q_2 \frac{ \tilde{A}_{\ell}^i}{\tilde{N}_c^i}\frac{ \tilde{E}_{\ell-1}^i}{\tilde{N}_c^i},\\
				&\pi_{\vv_{\ell_{6}}}^i(\xx)=  
				m_1^r q_2 \frac{ \tilde{A}_{\ell-1}^i}{\tilde{N}_c^i}\frac{ \tilde{E}_{\ell}^i}{\tilde{N}_c^i},\qquad \quad
				\pi_{\vv_{\ell_{7}}}^i(\xx) =  
				m_2^r q_2 \frac{ \tilde{B}_{\ell}^i}{\tilde{N}_c^i}\frac{ \tilde{E}_{\ell-1}^i}{\tilde{N}_c^i},\\
				&\pi_{\vv_{\ell_{8}}}^i(\xx) =  
				m_2^r q_2 \frac{ \tilde{B}_{\ell-1}^i}{\tilde{N}_c^i}\frac{ \tilde{E}_{\ell}^i}{\tilde{N}_c^i},\qquad \quad
				\pi_{\vv_{\ell_{9}}}^i(\xx) =  
				m_1^r q_2 \frac{ \tilde{A}_{\ell}^i}{\tilde{N}_c^i}\frac{ \tilde{E}_{\ell+1}^i}{\tilde{N}_c^i},\\
				&\pi_{\vv_{\ell_{10}}}^i(\xx) =  
				m_1^r q_2 \frac{ \tilde{A}_{\ell+1}^i}{\tilde{N}_c^i}\frac{ \tilde{E}_{\ell}^i}{\tilde{N}_c^i},\qquad \quad
				\pi_{\vv_{\ell_{11}}}^i(\xx) =  
				m_2^r q_2 \frac{ \tilde{B}_{\ell}^i}{\tilde{N}_c^i}\frac{ \tilde{E}_{\ell+1}^i}{\tilde{N}_c^i},\\
				&\pi_{\vv_{\ell_{12}}}^i(\xx) =  
				m_2^r q_2 \frac{ \tilde{B}_{\ell+1}^i}{\tilde{N}_c^i}\frac{ \tilde{E}_{\ell}^i}{\tilde{N}_c^i}.
			\end{split}
		\end{equation}
		Here we define the operators  $\pi_{\vv_{\ell_{j}}}^i(\xx) = \pi(\xx+i\vv_{\ell_{j}}|\xx+(i-1)\vv_{\ell_{j}})$ for any $i=1,\ldots,N_c$, we use the following notation $\tilde{Y}^i=Y-i+1$ for any state $Y$  in the set $\{A_\ell, B_\ell,E_\ell,A_{\ell\pm 1}, B_{\ell\pm 1},E_{\ell\pm 1},N_c\}$ and since migration outside the boundaries is not allowed, we introduce $A_{0},E_0,B_0,A_{M_c+1},E_{M_c+1},B_{M_c+1}$ to be equal to zero.	
		The master equation associated to this process is derived in the following Proposition.
		\begin{prop}[Consistency]\label{prop:2} 
			The master equation can be written as 
			\begin{equation} \label{eq:master_NH}
				\frac{dP(\xx,s)}{ds}= \sum_{\ell_j\in \mathcal{J}}\Biggl[ \pi(\xx|{\bf x}-{\bf v}_{\ell_j}) P({\bf x}-{\bf v}_{\ell_j},s)-   \pi({\bf x}+{\bf v}_{\ell_j}|\xx)P({\bf x},s)\Biggr],
			\end{equation}
			where $s = t N_c$, the set $\mathcal{J}$ is defined as in \eqref{eq: setJ} and $|\mathcal{J}|=M_c(M+1)$ is the number of events that can occur.
		\end{prop}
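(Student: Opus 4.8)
The plan is to derive \eqref{eq:master_NH} from the forward balance of the ensemble process over the elementary interval $[t,t+d\tau_{N_c}]$ and to show that, once the time is rescaled by $s=tN_c$, only the single-event ($k=1$) channels survive. First I would write the Chapman--Kolmogorov update
\begin{equation*}
P(\xx,t+d\tau_{N_c}) = P(\xx,t)\Bigl(1-\sum_{\ell_j\in\mathcal{J}}\sum_{k=1}^{N_c}W_k(\xx+k\vv_{\ell_j}\,|\,\xx)\Bigr)+\sum_{\ell_j\in\mathcal{J}}\sum_{k=1}^{N_c}W_k(\xx\,|\,\xx-k\vv_{\ell_j})\,P(\xx-k\vv_{\ell_j},t),
\end{equation*}
where $W_k$ is the probability of the compound $k$-fold jump along $\vv_{\ell_j}$ during the step and the parenthesis is the no-transition (diagonal) probability. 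The content of \eqref{eq:A6}--\eqref{eq:A0a}, as I would read it, is that $W_k$ factorises as $\prod_{i=1}^{k}\bigl(\pi^i_{\vv_{\ell_j}}(\xx)\,d\tau_{N_c}\bigr)$, the $i$-th factor being the firing probability of the $i$-th interaction drawn without replacement, which is exactly what the depletion convention $\tilde Y^i=Y-i+1$ encodes.

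The decisive step is an order count in $d\tau_{N_c}$. Because each of the $k$ sub-interactions contributes its own factor $d\tau_{N_c}$, one has $W_k=\pi(\xx+k\vv_{\ell_j}|\xx)\,(d\tau_{N_c})^k=O\bigl((d\tau_{N_c})^{k}\bigr)$. Forming the difference quotient, dividing by $d\tau_{N_c}$ and letting it go to zero then kills every $k\ge2$ term, since it retains a factor $(d\tau_{N_c})^{k-1}\to0$, while the $k=1$ terms survive with unit coefficient; identifying $d\tau_{N_c}=ds$ and hence $s=tN_c$ turns the quotient into $dP/ds$. I would then record that the surviving rates $\pi(\xx+\vv_{\ell_j}|\xx)=\pi^1_{\vv_{\ell_j}}(\xx)$ of \eqref{eq:A0a} agree with the rates \eqref{eq:transition} of the original model up to the replacement $N_c-1\mapsto N_c$ in the birth/competition denominators, an $O(1/N_c)$ gap that is precisely the consistency being claimed. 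Finally, counting channels gives $|\mathcal{J}|=M_c(M+1)$: the $M+1=13$ event types of \eqref{eq:5a}--\eqref{eq:5c} in each of the $M_c$ cells, with the boundary convention $A_0=E_0=\dots=0$ simply zeroing the out-of-domain migration terms.

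The hard part will be the order count just described: one has to verify that \emph{all} genuinely multi-event contributions are higher order, not only the $k\ge2$ repetitions of a single channel but also the mixed products in which two or more distinct events (birth, migration, death, in possibly different cells) combine within one step to land on the same $\xx$. Controlling these cross terms --- equivalently, showing that the product $\prod_{\ell_j,k}(1-W_k)$ contributes only its linear part to the diagonal and that no mixed gain term is $O(d\tau_{N_c})$ --- is what makes the reduction to the single-step form \eqref{eq:master_NH} rigorous rather than a formal inspection.
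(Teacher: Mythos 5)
Your proposal follows essentially the same route as the paper's proof: the same Chapman--Kolmogorov balance over $[t,t+d\tau_{N_c}]$ with the no-transition term \eqref{eq:proof_1} and gain term \eqref{eq:proof_2}, the same factorization $\pi(\xx+k\vv_{\ell_j}|\xx)\,d\tau_{N_c}^k=\prod_{i=1}^{k}\pi^i_{\vv_{\ell_j}}(\xx)\,d\tau_{N_c}$ from \eqref{eq:A6}, and the same order count that isolates the $k=1$ terms after dividing by $d\tau_{N_c}$ and rescaling $s=tN_c$. Your two closing observations --- the $O(1/N_c)$ discrepancy between the $N_c-1$ and $N_c$ denominators in \eqref{eq:transition} versus \eqref{eq:A0a}, and the need to check that mixed jumps along distinct channels $\vv_{\ell_j}\neq\vv_{\ell'_{j'}}$ are also $O(d\tau_{N_c}^2)$ --- are points the paper leaves implicit, and flagging them strengthens rather than departs from its argument.
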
 
		\begin{proof}
			The probability to be in the state with $\xx$ individuals at time $t+\tau_{N_c}$ is given by two contributions:
			\begin{itemize}
				\item	the probability of staying in the state $\xx$ at time $t+\tau_{N_c}$ that is
				\begin{equation}\label{eq:proof_1}
					1- \sum_{\ell_j\in\mathcal{J}} \sum_{k=1}^{N_c} \pi(\xx+k\vv_{\ell_j}|\xx)d\tau_{N_c}^k;
				\end{equation}
				\item  the probability of moving from the state with $\xx$ individuals to the state with $\xx+k\vv_{\ell_j}$ individuals in the time interval $[t,t+\tau_{N_c}]$, for any $k=1,\ldots,N_c$ and $\ell_j\in\mathcal J$ that is  
				\begin{equation}\label{eq:proof_2}
					\sum_{\ell_j\in\mathcal{J}} \sum_{k=1}^{N_c} \pi(\xx|\xx-k\vv_{\ell_j})d\tau_{N_c}^k.
				\end{equation}
			\end{itemize}
			Indeed, recall that for any $i=1,\ldots,N_c$ a transition from the state with $\xx+i\vv_{\ell_j}$ individuals to the state with $\xx+(i-1)\vv_{\ell_j}$ individuals in the infinitesimal time interval $[t,t+\tau_{N_c}]$ occurs with probability $\pi_{\ell_j}^i(\xx) d\tau_{N_c}$
			and hence the probability of moving from the state with $\xx$ to the state with $\xx+k\vv_{\ell_j}$ individuals, for any $k=1,\ldots,N_c$, is 
			\[
			\prod_{i=1}^{k} \left\lbrace \pi_{\vv_{\ell_j}}^i(\xx) d\tau_{N_c}\right\rbrace = \pi(\xx+k\vv_{\ell_j}|\xx) d\tau_{N_c}^k,
			\]
			that is exactly the expression that appears in equation \eqref{eq:proof_1}. Similarly we can derive equation \eqref{eq:proof_2}. Hence,
			\begin{equation}\label{eq:proof_3}
				\begin{split}
					P(\xx,t+d\tau_{N_c}) =& \sum_{\ell_j\in \mathcal{J}}\sum_{k=1}^{N_c} \Big[ P(\xx-k\vv_{\ell_j},t) \pi(\xx|\xx-k\vv_{\ell_j}) d\tau_{N_c}^k \Big] \\ &+P(\xx,t) \Biggl ( 1-\sum_{\ell_j\in \mathcal{J}}\sum_{k=1}^{N_c} \Big[  \pi(\xx+k\vv_j|\xx) d\tau_{N_c}^k \Big]\Biggr).
			\end{split}\end{equation}
			Rewrite explicitly the term for $k=1$ in equation \eqref{eq:proof_3} to have
			\begin{equation}\label{eq:proof_4}
				\begin{split}
					&P(\xx,t+d\tau_{N_c}) - P(\xx,t) =  d\tau_{N_c}\sum_{\ell_j\in \mathcal{J}} \Big[ P(\xx-\vv_j,t) \pi(\xx|\xx-\vv_j) - P(\xx,t) \pi(\xx+\vv_j|\xx)  \Big]\\
					&+ d\tau_{N_c}^k\sum_{\ell_j\in \mathcal{J}}\sum_{k=2}^{N_c} \Big[ P(\xx-k\vv_j,t) \pi(\xx|\xx-k\vv_j)  - P(\xx,t) \pi(\xx+k\vv_j|\xx)  \Big].
				\end{split}
			\end{equation}
			Dividing both sides of equation \eqref{eq:proof_4} by $d\tau_{N_c}$ and letting $d\tau_{N_c}\to 0$ we obtain the consistency with the master equation \eqref{eq:3b}, where the time variable is scaled by a factor $N_c$, i.e. $s= t N_c$.
		\end{proof}
		\begin{rmk}
			We observe that thanks to the consistency result of Proposition \ref{prop:2}, the corresponding mean-field approximation is equivalent to \eqref{eq:6}. According to \eqref{eq:6} the time scale of the mean-field dynamics is such that $$\tau = s /{N_c} = N_c t/N_c = t,$$ 
			then the time scales of the ensemble agent-based dynamics and of the mean-field dynamics are equivalent.
		\end{rmk}
		
	\subsection{Efficient Monte-Carlo methods}
 The idea of the efficient Monte Carlo algorithm is to allow multiple events to occur at the same time step that is supposed to be fixed and constant for any choice of $N_c$. Therefore, as we will see in the numerical experiments, its accuracy is comparable with the one of classic approximated algorithms but its efficiency is improved. In the following we will focus first on the spatial homogeneous case, where a single cell is accounted ($M_c=1$) and $N_c = N$, second we will consider the full dynamics with spatial interaction. We refer to \ref{app:homo} for further details on the homogeneous case.
	
	\paragraph{Spatially homogeneous case} 
Divide a priori the time interval considering a constant time step $\tau$ that is independent of the sample size. Introduce a parameter $\mu \in [0,1]$ and at each time $t$ assume that simultaneously $\floor{\mu N}$ individuals interact two by two. If the two individuals are
	\begin{itemize}
		\item a prey and an empty space: with probability $b^r \tau$ a new prey born and occupies the empty space;
		\item a predator and a prey: with probability $p_1^r \tau$ then a new predator born and with probability $p_2^r \tau$ the prey dies and a new empty space is added to the system.
	\end{itemize}
	Assume that the remaining $N-\floor{\mu N}$ individuals are subjected to death events that happen with probability $d_1^r\tau$ in the predators population and with probability $d_2^r\tau$ in the preys one.
	Algorithm \ref{alg_MC_new} defines the details of the efficient Monte Carlo algorithm. 
	\begin{alg}[Efficient ensemble stochastic algorithm - homogeneous]~ \label{alg_MC_new}
		\begin{enumerate}
			\item[\texttt 1.] Define the sample, the initial time $t=0$, the final time $T$, the time step $\tau$ and a parameter $\mu \in [0,1]$ .   
			\item[\texttt 2.] \texttt{while} $t<T$ 
			\begin{enumerate}
				\item Birth and competition events happen between $\floor{\mu N }$ individuals selected two by two \[
				s_Bs_E\xrightarrow{b}s_Bs_B,\quad s_As_B\xrightarrow{p_1} s_As_A, \quad s_As_B \xrightarrow{p_2}s_As_E,
				\]
				according to probability $b=b^r\tau$, $p_1^r\tau$ and $p_2^r\tau$, respectively.
				\item The remaining $N-\floor{\mu N }$ individuals are subjected to death events \[
				s_A\xrightarrow{d_1} s_E, \quad s_B \xrightarrow{d_2}s_E,
				\]
				according to probability  $d_1=d_1^r\tau$ and $d_2=d_2^r\tau$, respectively.
				\item Update the sample.
				\item Set $t \leftarrow t+\tau$.
			\end{enumerate}
			\texttt{repeat}
		\end{enumerate}
	\end{alg}
	\paragraph{Spatially heterogeneous case}\label{sec:algNH}
We extend Algorithm \ref{alg_MC_new} to the spatially heterogeneous case. Hence, the sample is divided in cells and populations are subjected also to migration events. Migrations can occur either between cell $C_\ell$ and cell $C_{\ell+1}$ or between cell $C_\ell$ and cell $C_{\ell-1}$ for any $\ell=1,\ldots, M_c$.  The spatially heterogeneous 
	Algorithm \ref{alg_MC_new} reads as follows
		\begin{alg}[Efficient ensemble stochastic algorithm]~ \label{alg_MC_new_heter}	
			\begin{enumerate}
				\item[\texttt 1.] Define the sample, the initial time $t=0$, the final time $T$, the time step $\tau$ and two parameters $q_1,q_2 \in [0,1]$ such that $q_1+q_2 \leq 1$.   
				\item[\texttt 2.] \texttt{while} $t<T$ 
				\begin{enumerate}
					\item In each cell $C_\ell$, $\floor{q_1 N_c}$ individuals interact two by two and are subjected to birth and competition events \eqref{eq:5a} that occur with probabilities $b=b^r\tau$, $p_1=p_1^r\tau$ and $p_2=p_2^r\tau$; 
					\item In each cell $C_\ell$,  $\floor{q_2N_c/2}$ individuals interact with $\floor{q_2N_c/2}$ individuals sampled in cell $C_{\ell+1}$ and other $\floor{q_2N_c/2}$ interact with $\floor{q_2N_c/2}$ individuals sampled in cell $C_{\ell-1}$ according to \eqref{eq:5b}, and migrate with probability $m_1=m_1^r \tau$ for predators and $m_2=m_2^r\tau$ for preys;
					\item In each cell $C_\ell$, the remaining $N_c-\floor{q_1 N_c} -\floor{q_2 N_c}$ individuals are subjected to death events \eqref{eq:5c} according to probabilities $d_1=d_1^r\tau$ and $d_2=d_2^r\tau$;
					\item Update the sample;
					\item Set $t\leftarrow t+\tau$. 
				\end{enumerate}
			\end{enumerate}
	\end{alg}
	\newpage
	\section{Numerical experiments}\label{section4}
	\subsection{Test 1: Validation}\label{section4.1}
	In this section we present a comparison between the numerical solutions of the mean-field equations and the stochastic simulations. In both the homogeneous and heterogeneous case, stochastic simulations have been performed with the efficient version of the Monte Carlo algorithm presented in Section \ref{section3}. In the homogeneous case, the numerical solutions of the mean-field equations are computed using the Matlab function $\mathtt{ode45}$, \cite{MATLAB:2017}, while in the heterogeneous case with a combination of finite difference methods and numerical methods for ODEs assuming periodic boundary conditions, \cite{hundsdorfer2013numerical}.  The parameters choice for all the tests in Section \ref{section4.1} is specified in Table \ref{tab:all_parameters}. The sample size $N$ and the total number of individuals $N_c$ in any cell change in any test and will be defined later. 
	\begin{table}[!h]
		\begin{center}
			\caption{Model parameters for the different scenarios.}\label{tab:all_parameters}
			\begin{tabular}{c|cccccccccc}
				&$b^r$ & $d_1^r$ & $d_2^r$ & $p_1^r$ & $p_2^r$ & $m_1^r$ & $m_2^r$&$\mu$ & $q_1$& $q_2$\\
				\hline
				Homogeneous & 0.1 & 0.1&  0 &0.25&0.05&-&-&0.5&-&-\\
				\hline
				Heterogeneous & 0.1 & 0.1&  0 &0.25&0.05&0.5&0.5&-&0.3&0.3\\
				\hline
			\end{tabular}			
		\end{center}
	\end{table}
	\\Figure \ref{fig:figure1} shows a comparison between stochastic and mean-field solutions in the homogeneous case assuming the sample size to be $N=1000$ and the initial predators and preys density to be $A_0=N/4$ and $B_0=N/2$ respectively.  Note that the solution of the mean-field equations \eqref{eq:6}, without spatial dependence, converges to the stable equilibrium
	\vspace{0.5cm} \begin{equation}\label{equilibrium}
		f^*= \frac{2 \tilde{b}^r \tilde{p}^r_1 -\tilde{b}^r \tilde{d}^r_1 -\tilde{p}^r_1 \tilde{d}^r_2}{2 \tilde{p}^r_1 (\tilde{p}^r_1+\tilde{p}^r_2+\tilde{b}^r)}, \qquad g^*=\frac{\tilde{d}^r_1}{2 \tilde{p}^r_1},
	\end{equation} 
	that for the parameters choice of Table \ref{tab:all_parameters} takes the values $f^*=g^*=0.2$.
	
	Figure \ref{fig:figure2} shows that the error of the efficient Monte Carlo algorithm is proportional to $1/\sqrt{N}$, as the one of classic algorithms in both the predators (left) and preys (right) cases.  The errors $e_f^N$ and $e_g^N$ are defined as 
	\begin{equation} \label{error}
		\begin{split}
			e_f^N=\Vert f(t)-f^N(t)\Vert_{\infty},\qquad e_g^N=\Vert g(t)-g^N(t)\Vert_{\infty},
		\end{split}
	\end{equation}
	where $f(t)$, $g(t)$ denote the mean-field solutions and $f^N(t)$, $g^N(t)$ denote the stochastic solutions obtained with the efficient Monte Carlo algorithm at time $t$ for a certain value of $N$. Recall that $f(t)$, $f^N(t)$ refer to the predators population while $g(t)$, $g^N(t)$ to the preys one. 
	\begin{figure}[H]
		\centering
		\includegraphics[width=0.49\linewidth]{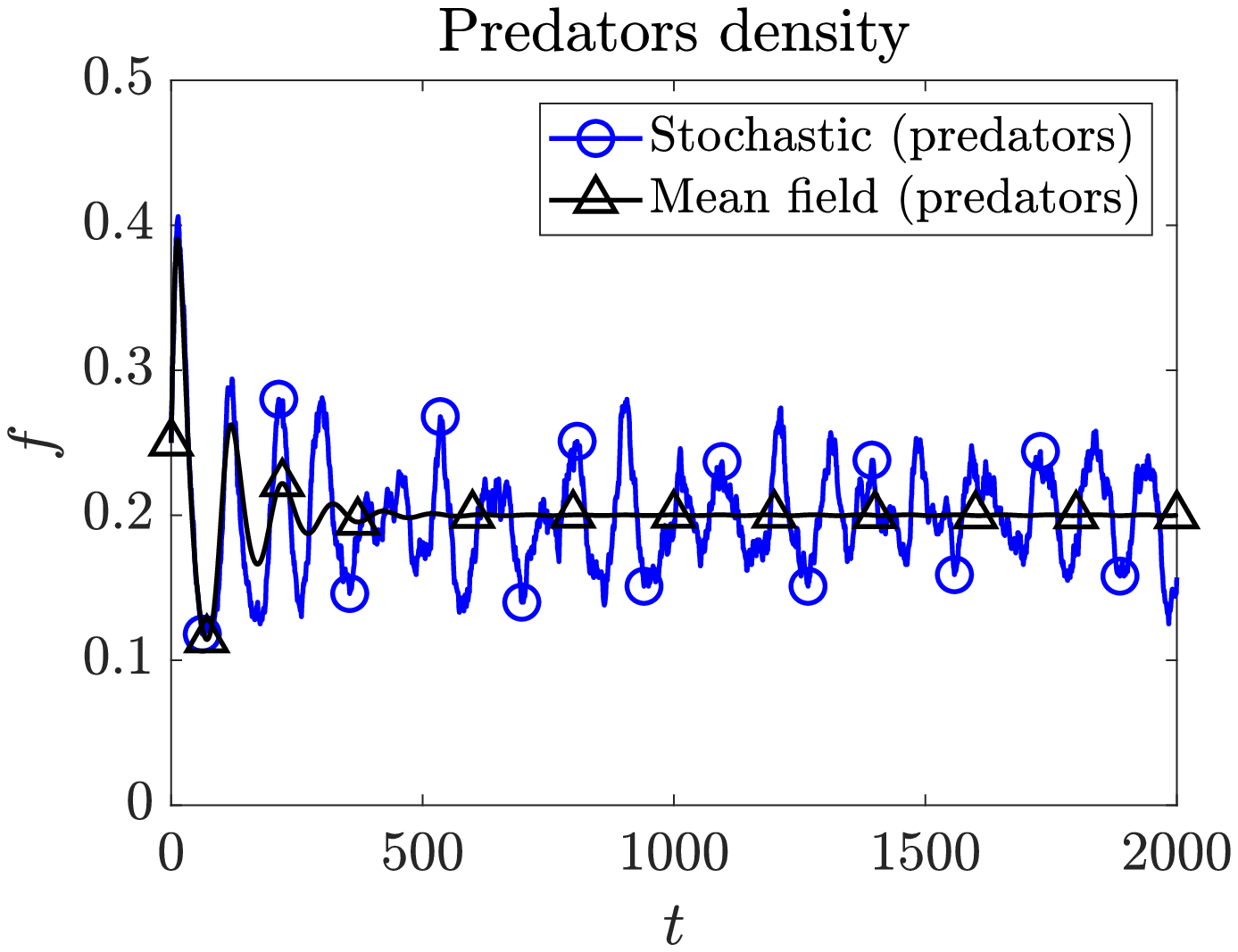}
		\includegraphics[width=0.49\linewidth]{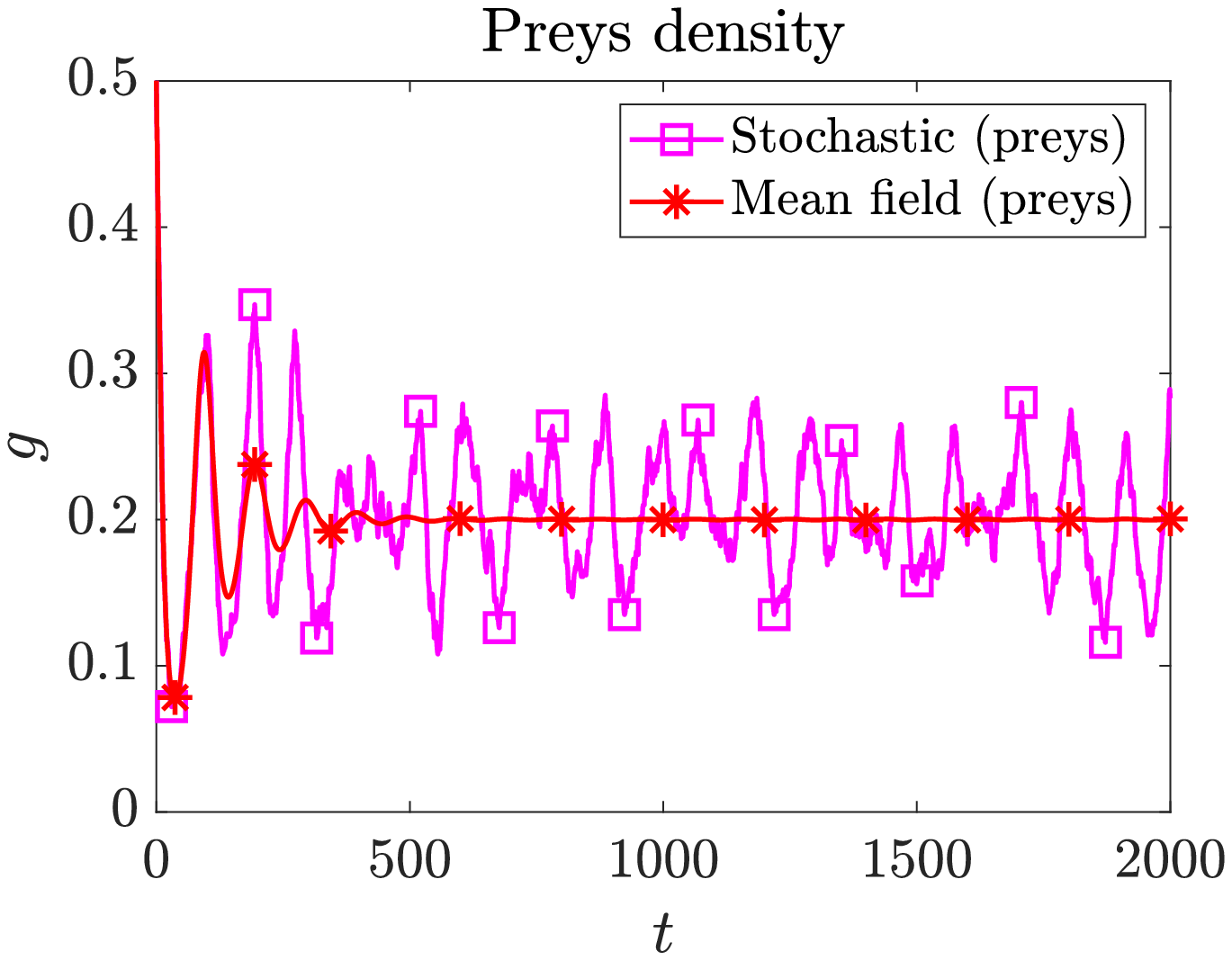}
		\caption{Homogeneous predator-prey model: simulation of the processes described in \eqref{eq:1a}-\eqref{eq:1b} with the efficient Monte Carlo algorithm and solutions of the spatial homogeneous mean-field equations \eqref{eq:6} for $N=1000$. Markers have been added just to indicate different lines.}
		\label{fig:figure1}
	\end{figure}
	\begin{figure}[H]
		\centering
		\includegraphics[width=0.49\linewidth]{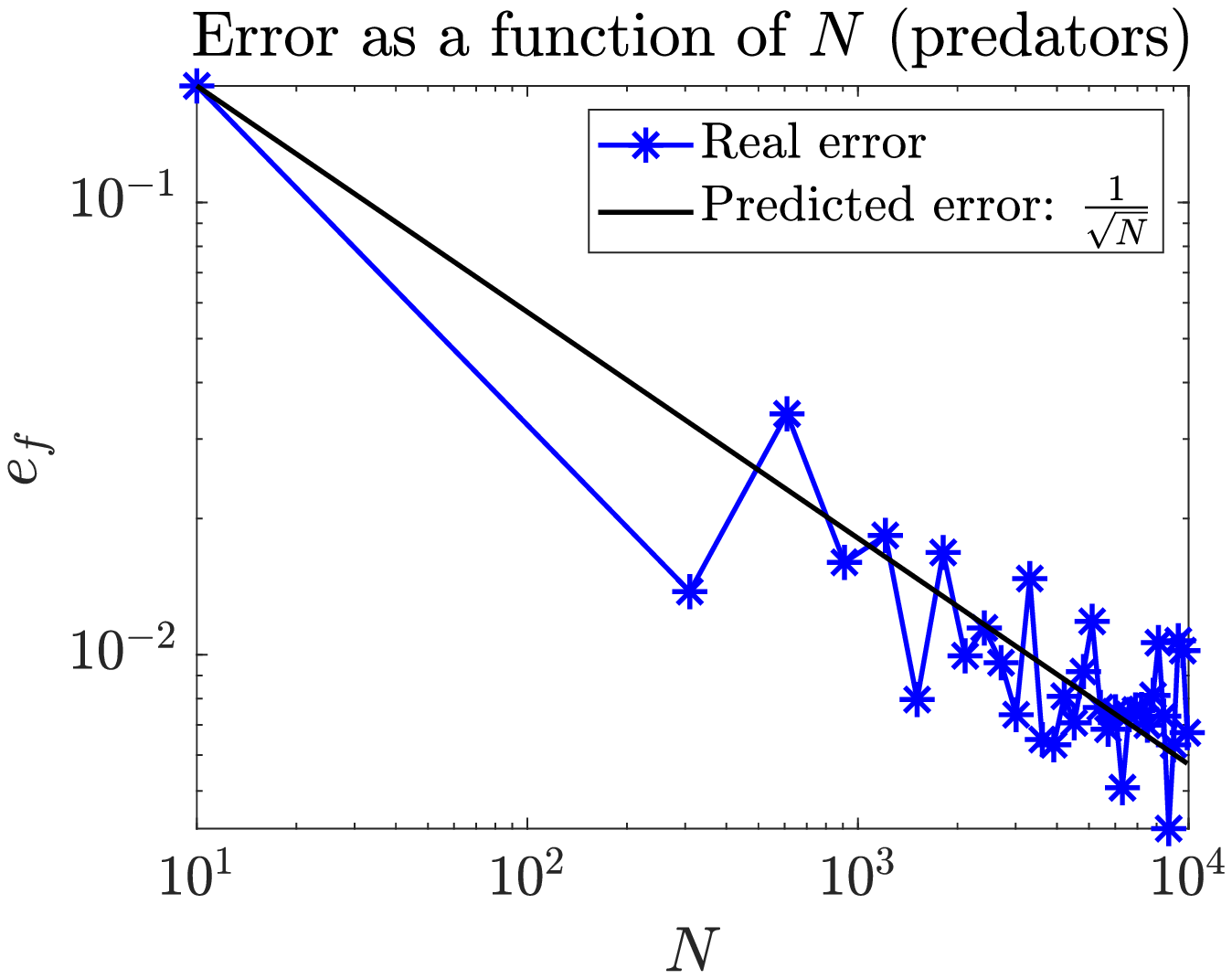}
		\includegraphics[width=0.49\linewidth]{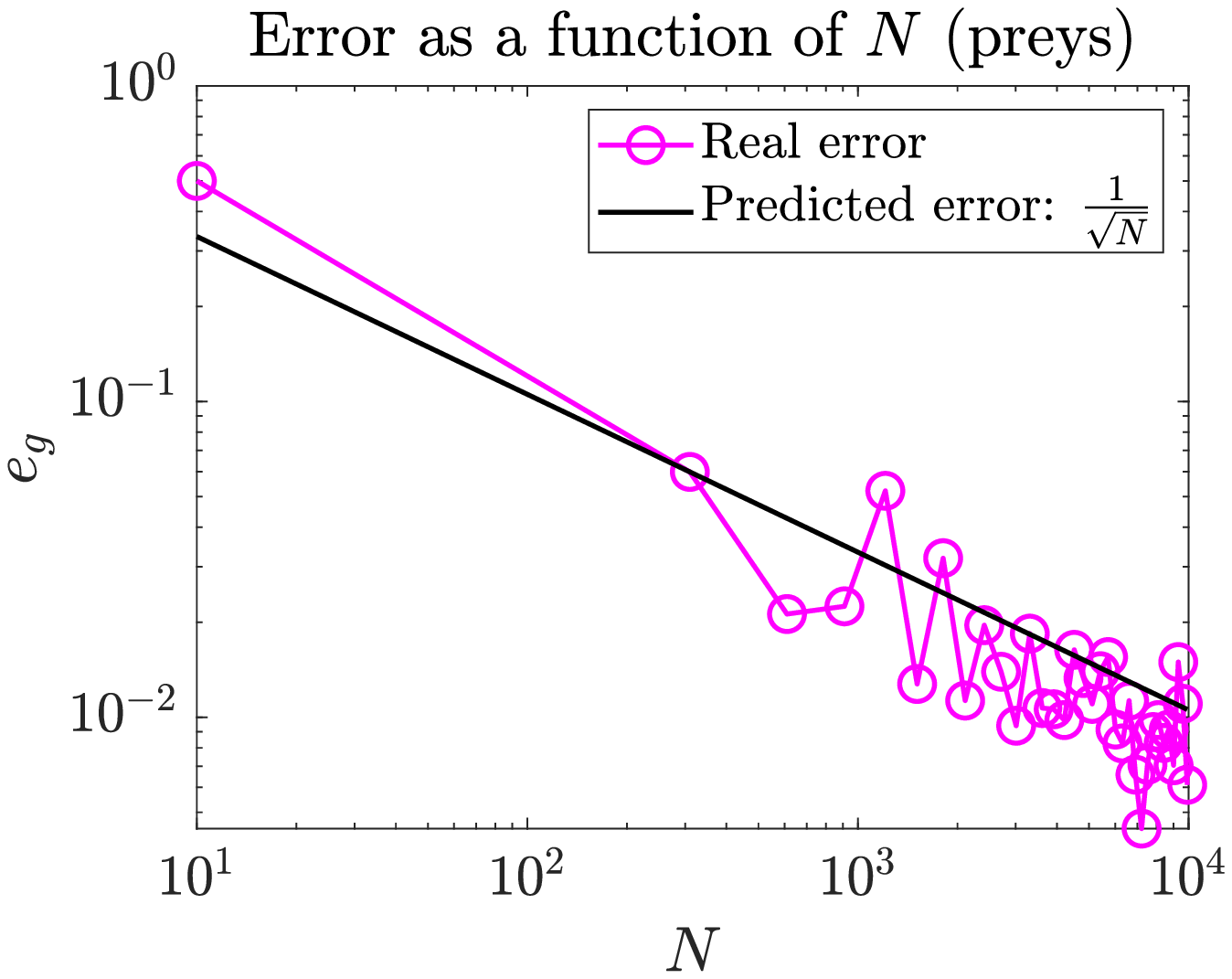}
		\caption{Homogeneous predator-prey model: efficient Monte Carlo algorithm's error computed as in equations \eqref{error} for $N=[10,\ldots,10^4]$. The error is proportional to $1/\sqrt{N}$ as the one of the classic Monte Carlo algorithm.  Markers correspond to the values $e_f^N$, $e_g^N$ for a fixed $N$.}
		\label{fig:figure2}
	\end{figure}
	Let us now focus on the heterogeneous model and consider first the one dimensional case. Assume that the dynamics evolves in an interval area of land $[0,L_x]$, $L_x>0$, divided in $M_c$ cells. Figure \ref{fig:figure3} shows three snapshots taken at time $t=5$, $t=50$ and $t=100$ in which the stochastic and mean-field solutions are compared. At time $t=5$, $A_0=N_c/4$ predators and $B_0=N_c/2$ preys are concentrated in the same central cells. At time $t=50$, preys migrate in regions where the predators concentration is lower and predators decrease their size and start to migrate to reach the regions occupied by preys. At time $t=100$, preys continue their migration increasing their size in the regions where the predators concentration is lower.  
	\begin{figure}[h!]
		\centering
		\includegraphics[width=0.327\linewidth]{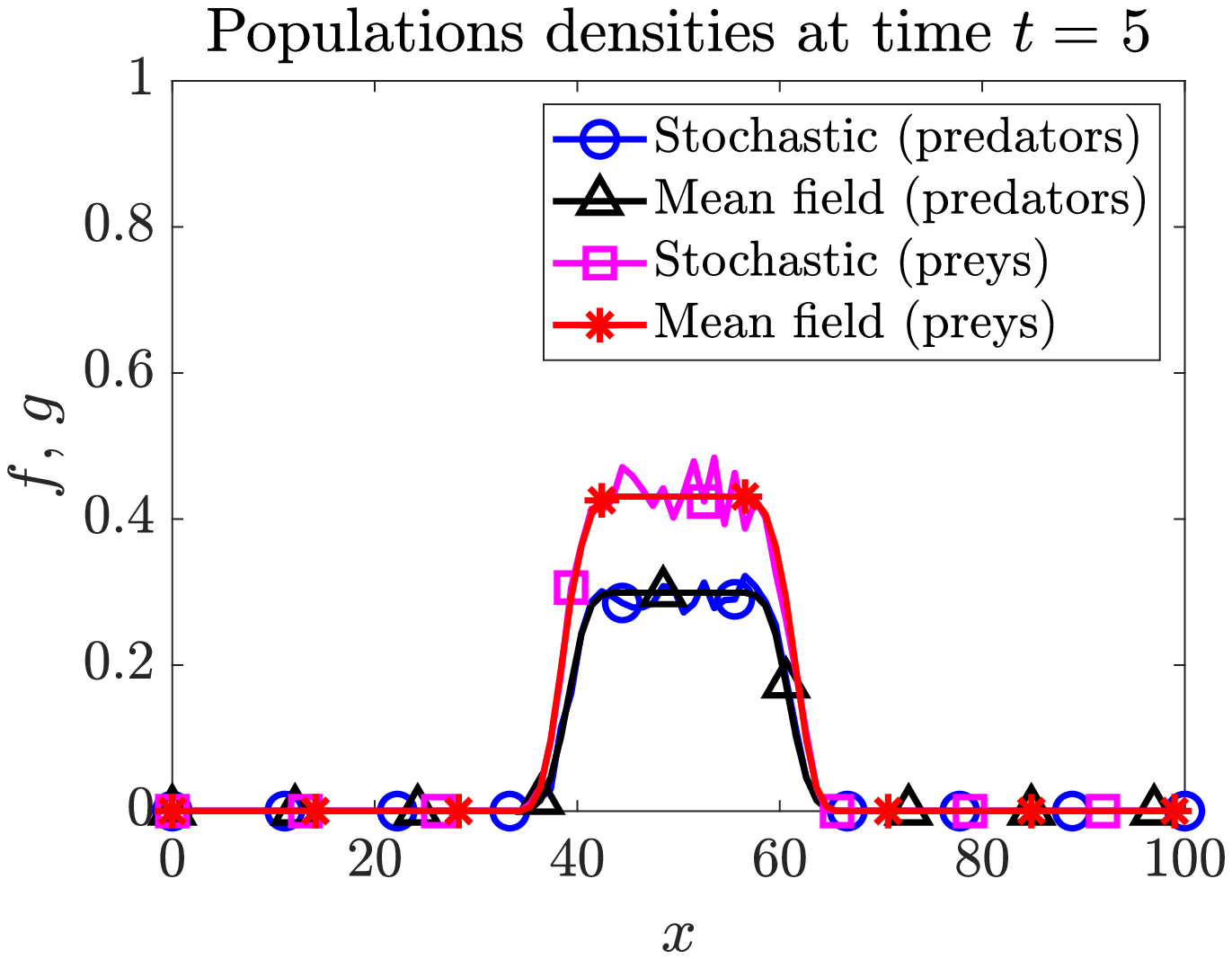}
		\includegraphics[width=0.327\linewidth]{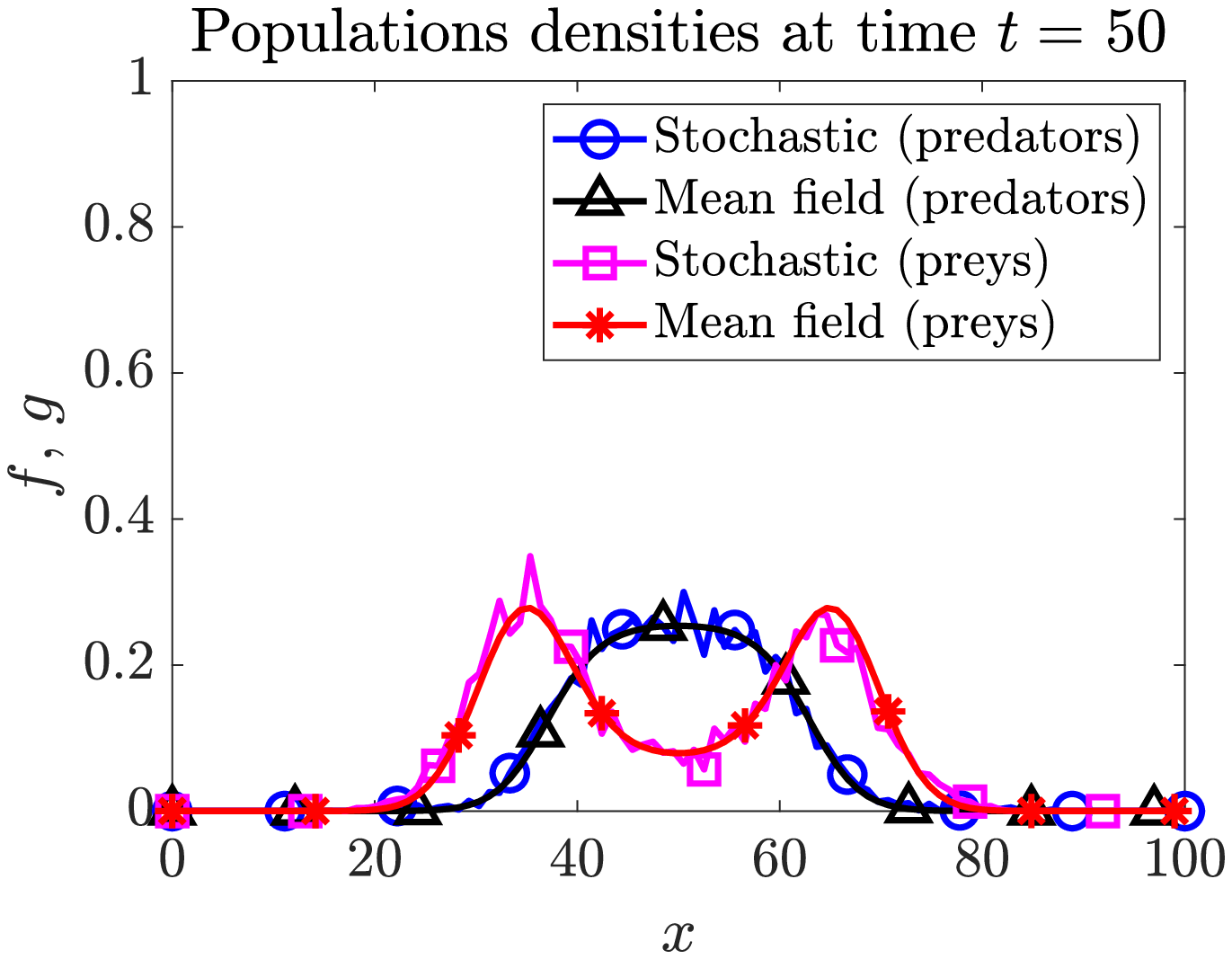}
		\includegraphics[width=0.327\linewidth]{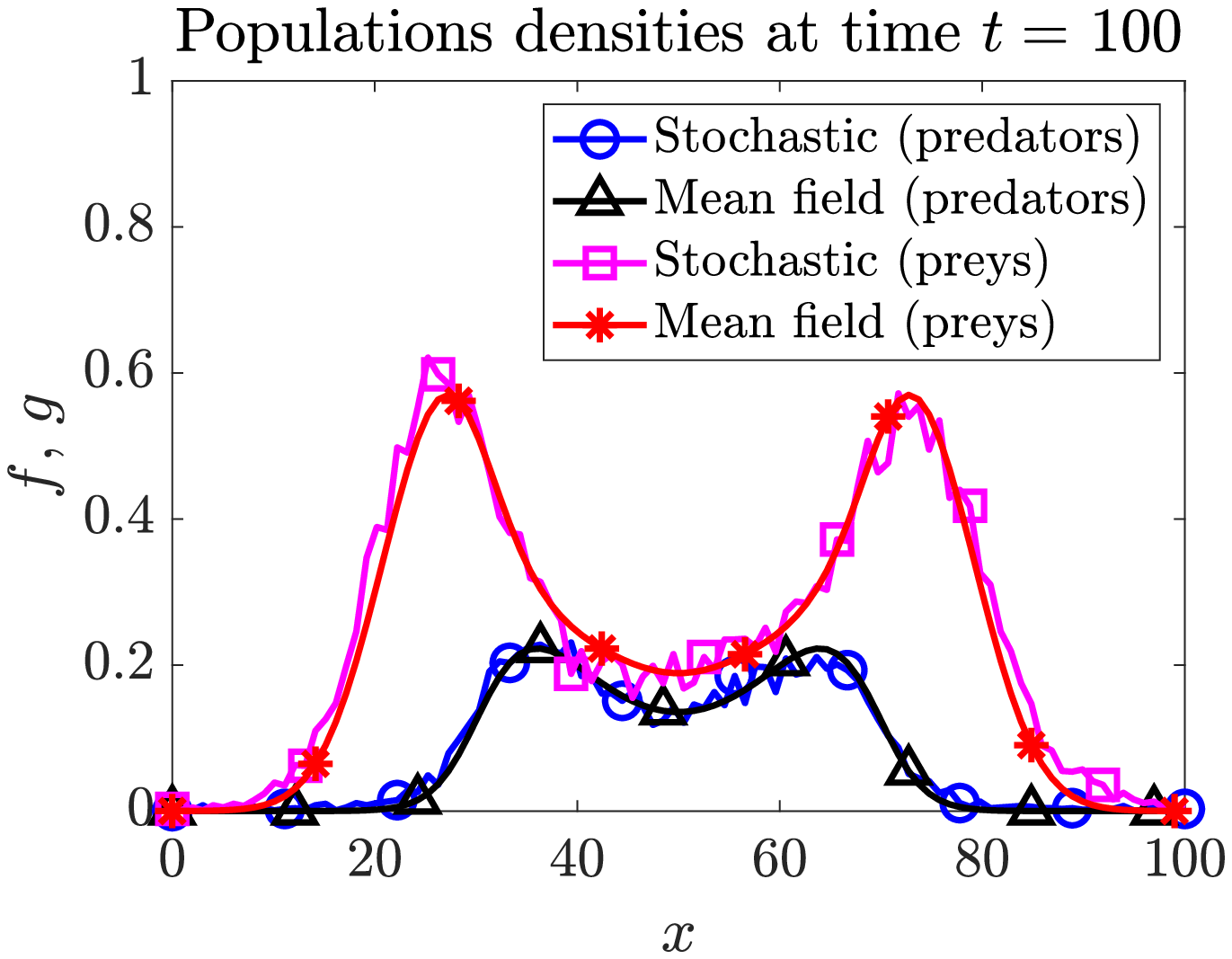}
		\caption{Heterogeneous one dimensional predator-prey model: simulation of the processes described in \eqref{eq:5a}-\eqref{eq:5b}-\eqref{eq:5c} with the efficient Monte Carlo algorithm and  solutions of the mean-field equations \eqref{eq:6} for $N_c=1000$. This figure shows three snapshots taken at time $t=5$ (left), $t=50$ (centre), $t=100$ (right). Markers have been added just to indicate different lines.}
		\label{fig:figure3}
	\end{figure}
	Figure \ref{fig:asymptotic_1d} shows the asymptotic behavior of the two populations at the mean-field and stochastic level. Predators and preys migrate in the whole available space reaching in each cell the value given by the equilibrium \eqref{equilibrium}. 
	\begin{figure}[h!]
		\centering
		\includegraphics[width=0.49\linewidth]{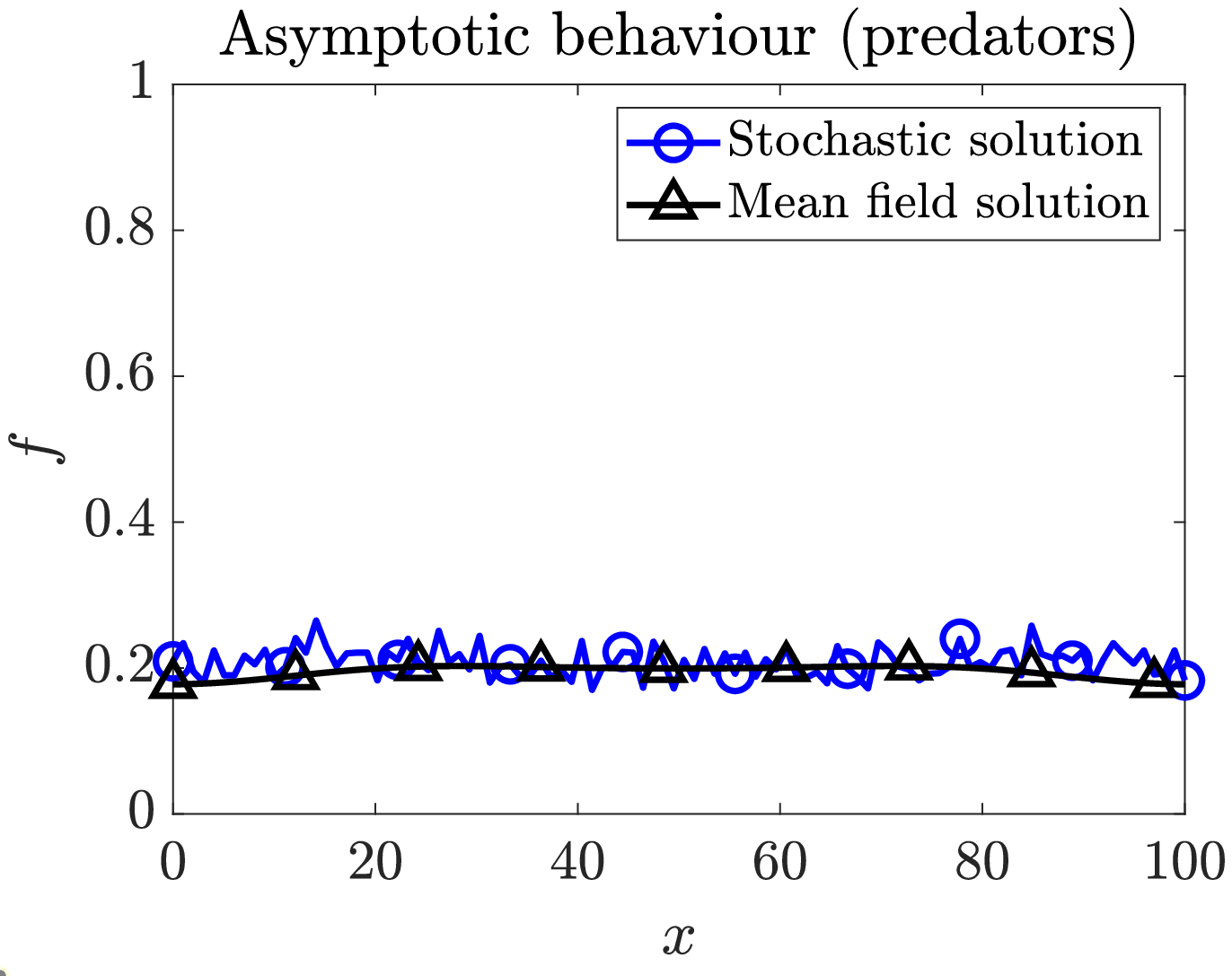}
		\includegraphics[width=0.49\linewidth]{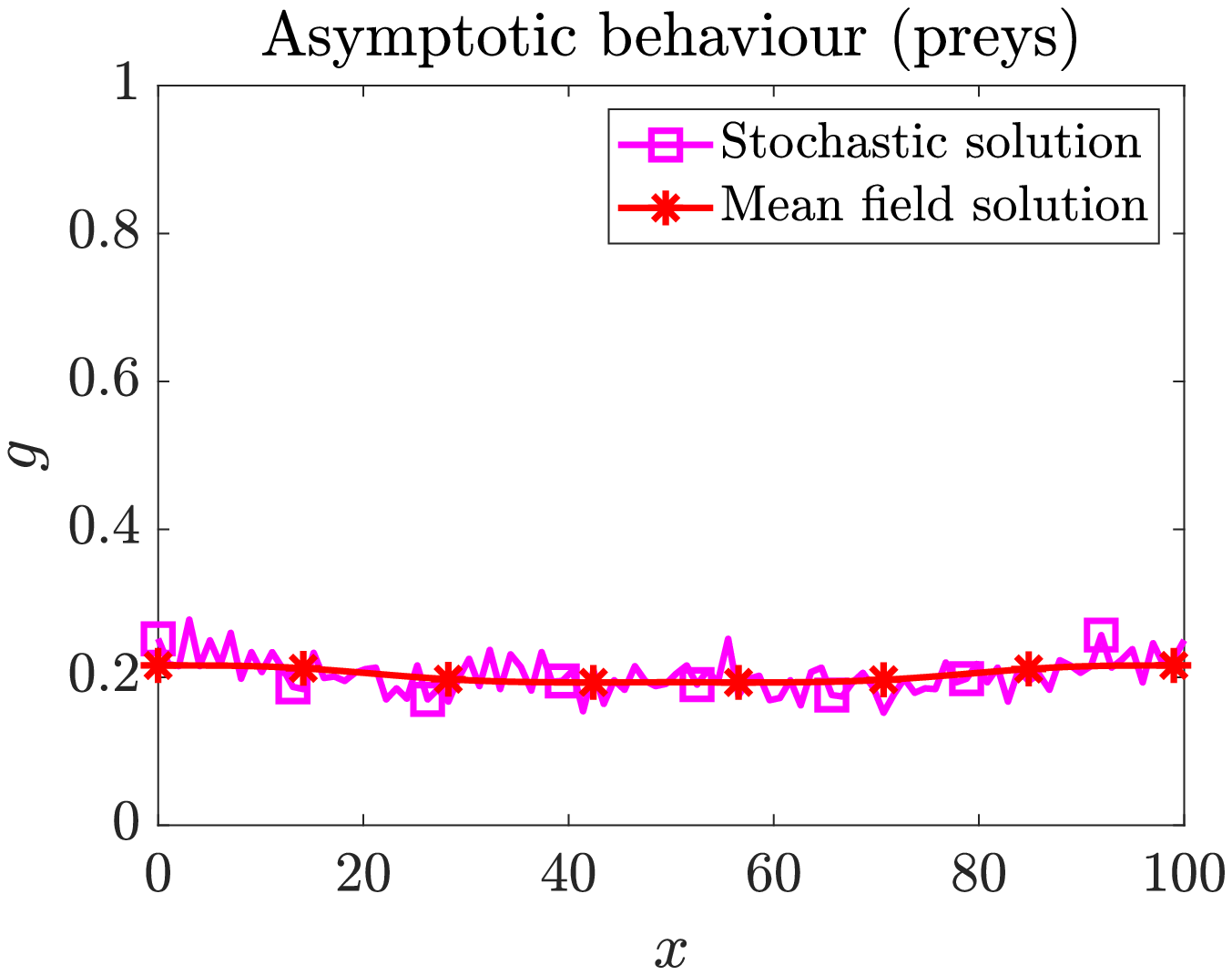}
		\caption{Heterogeneous one dimensional predator-prey model: asymptotic behavior (at time $t=500$) of predators (on the left) and preys (on the right) populations at the mean-field and stochastic level for $N_c=1000$. Markers have been added just to indicate different lines. }
		\label{fig:asymptotic_1d}
	\end{figure} 
	Figure \ref{fig:figure4} shows that the error of the efficient Monte Carlo algorithm as a function of $N_c$ is proportional to $1/\sqrt{N_c}$, as the one of classic algorithms.  The errors are computed as 
	\begin{equation} \label{errorNH}
		\begin{split} 
			&e_f^{N_c}=\left\langle  \max_{t} \vert f(x,t)-f^{N_c}(x,t) \vert \right\rangle_x,\,\,\,
			e_g^{N_c}=\left\langle  \max_{t} \vert g(x,t)-g^{N_c}(x,t) \vert \right\rangle_x,
		\end{split}
	\end{equation}
	where $\left\langle \cdot \right\rangle $ denotes the expected value with respect to $x$.   
	\begin{figure}
		\centering
		\includegraphics[width=0.49\linewidth]{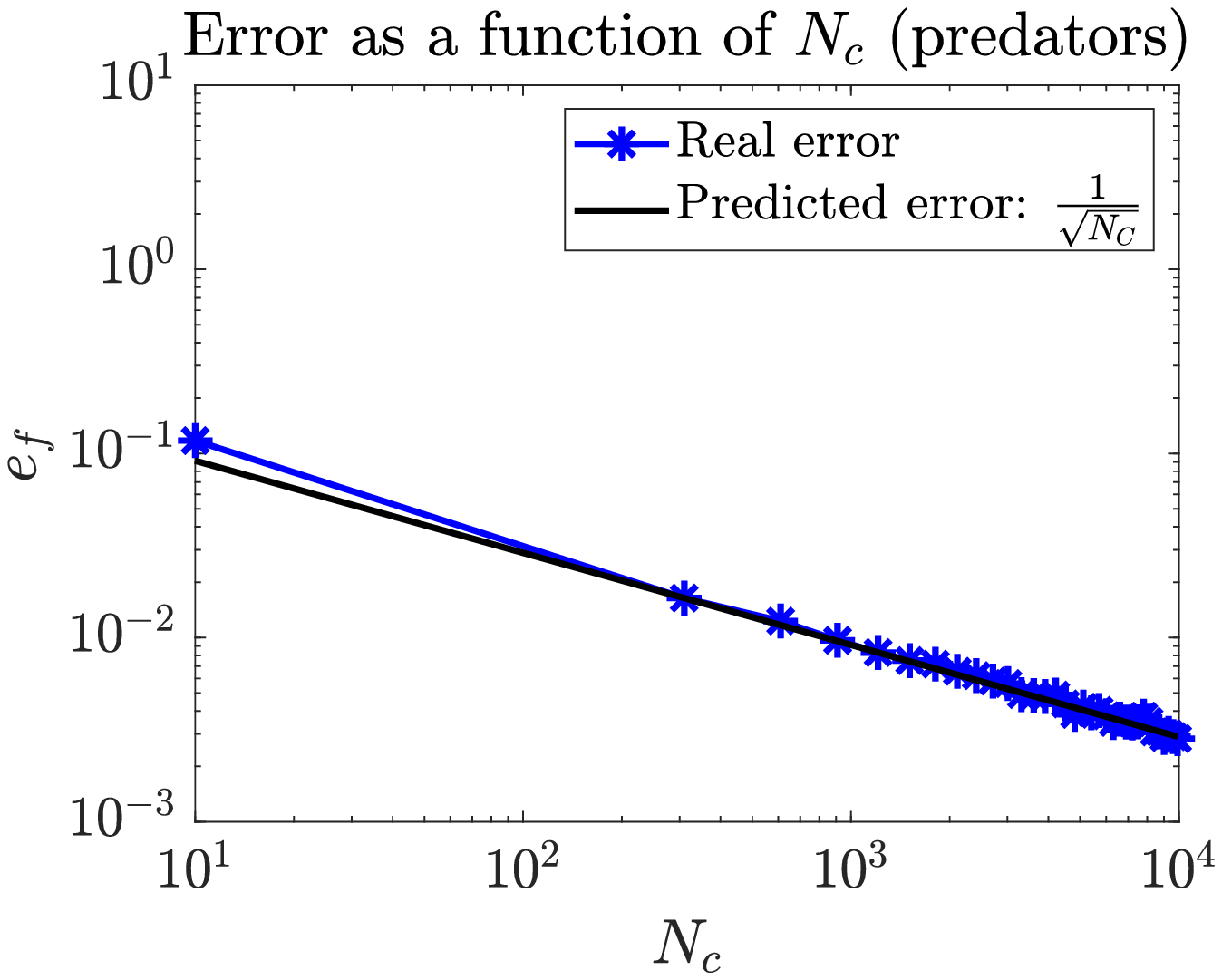}
		\includegraphics[width=0.49\linewidth]{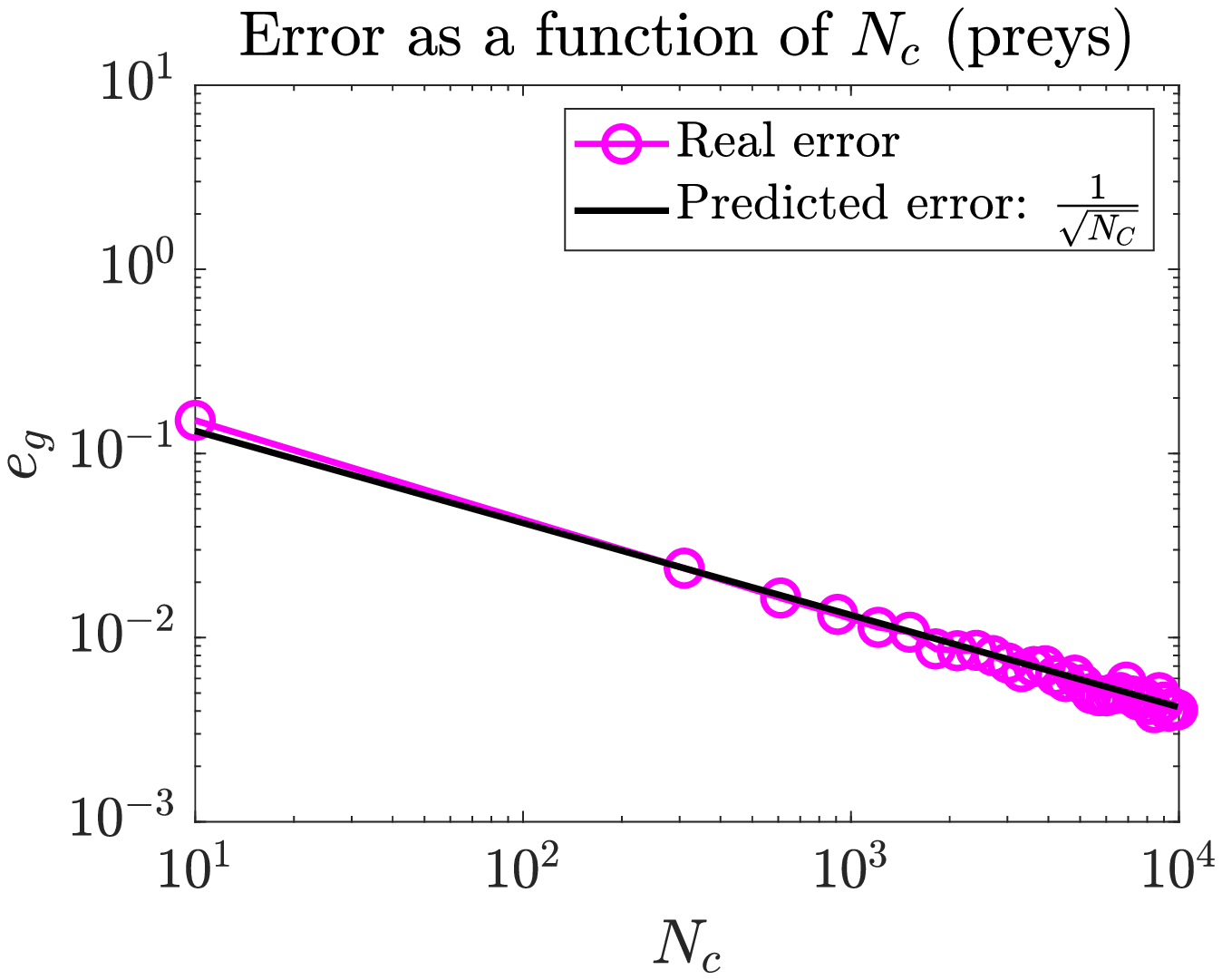}
		\caption{Heterogeneous one dimensional predator-prey model: efficient Monte Carlo algorithm's error computed as in equations \eqref{errorNH} for $N_c=[10,\ldots,10^4]$. The error is proportional to $1/\sqrt{N_c}$, as the one of the classic Monte Carlo algorithm. Markers correspond to the values $e_f^{N_c}$, $e_g^{N_c}$ for a fixed $N_c$.}
		\label{fig:figure4}
	\end{figure}
	Let us now focus on the two dimensional case. Assume that the dynamics evolves in a square area of land $[0,L_x]\times[0,L_y]$, $L_x,L_y>0$ divided in $C_{\ell_x\ell_y}$ cells, for $\ell_x=1,\ldots,M_{c}^x$ and $\ell_y=1,\ldots,M_c^y$. Populations can born, compete, die and migrate in one of the four nearest cells.
	The error computed as in equation \eqref{errorNH} assuming $x=(x,y)$ is still proportional to $1/\sqrt{N_c}$, as shown in Figure \ref{fig:figure5}.
	\begin{figure}
		\centering
		\includegraphics[width=0.49\linewidth]{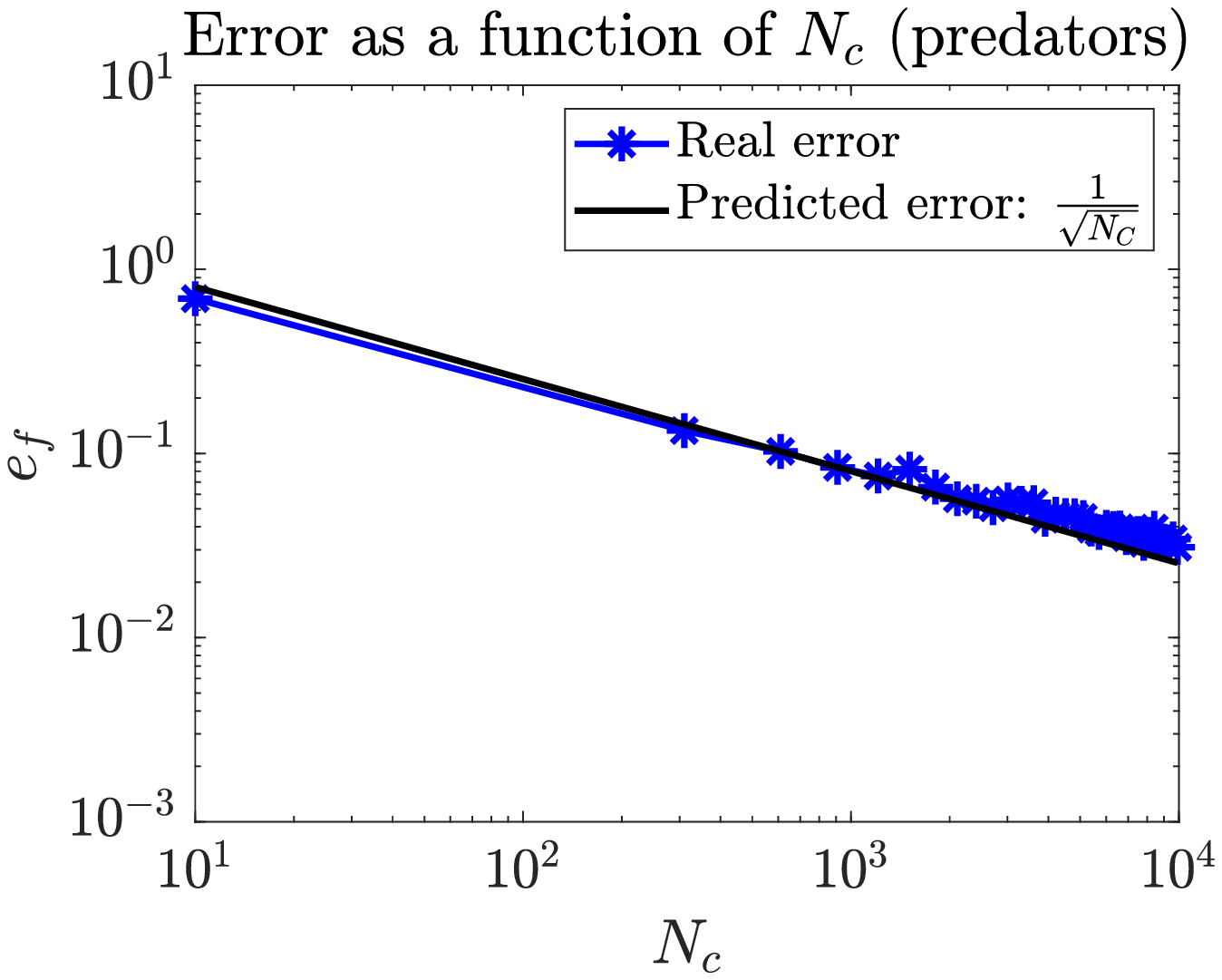}
		\includegraphics[width=0.49\linewidth]{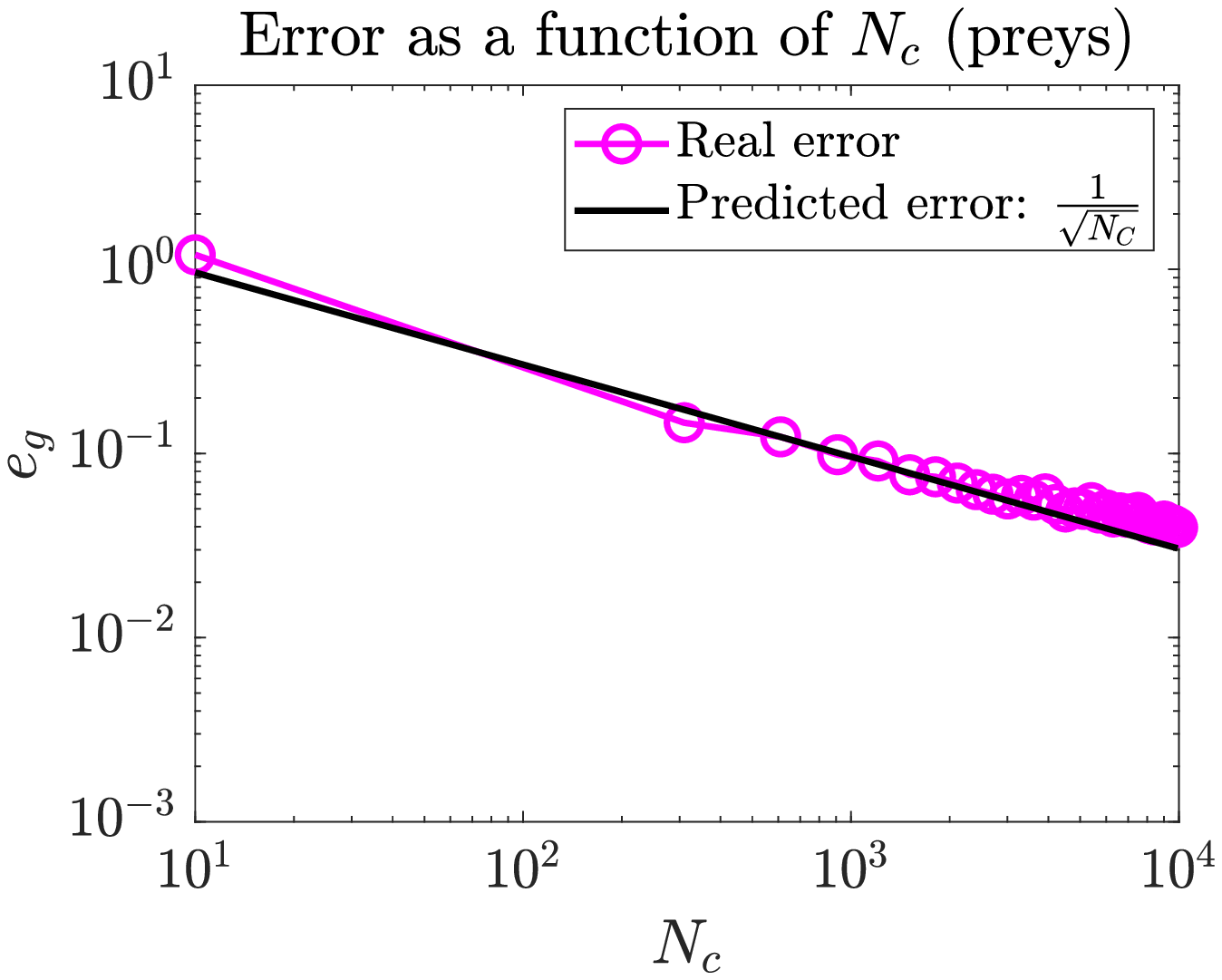}
		\caption{Heterogeneous two dimensional predator-prey model: efficient Monte Carlo algorithm's error computed as in equations \eqref{errorNH} for $N_c=[10,\ldots,10^4]$. The error is proportional to $1/\sqrt{N_c}$, as the one of the classic Monte Carlo algorithm. Markers correspond to the values $e_f^{N_c}$, $e_g^{N_c}$  for a fixed $N_c$.}
		\label{fig:figure5}
	\end{figure}
	Figure \ref{fig:figure6} and Figure \ref{fig:figure7} show three snapshots describing the time evolution of preys and predators densities at the mean-field and stochastic level in the heterogeneous two dimensional case for $N_c=1000$. 
	At time $t=5$, $B_0=N_c/2$ preys are concentrated in the central cells and surrounded by $A_0=N_c/4$ predators.  At time $t=100$ predators migrate in the central cells while preys reduce their size and start to migrate in the regions where the predators concentration is lower. At time $t=150$ preys are still migrating and increasing their size. Predators on the contrary are reducing their size and migrating to reach the regions in which preys are mainly concentrated.   
	\begin{figure}
		\centering
		\includegraphics[width=0.49\linewidth]{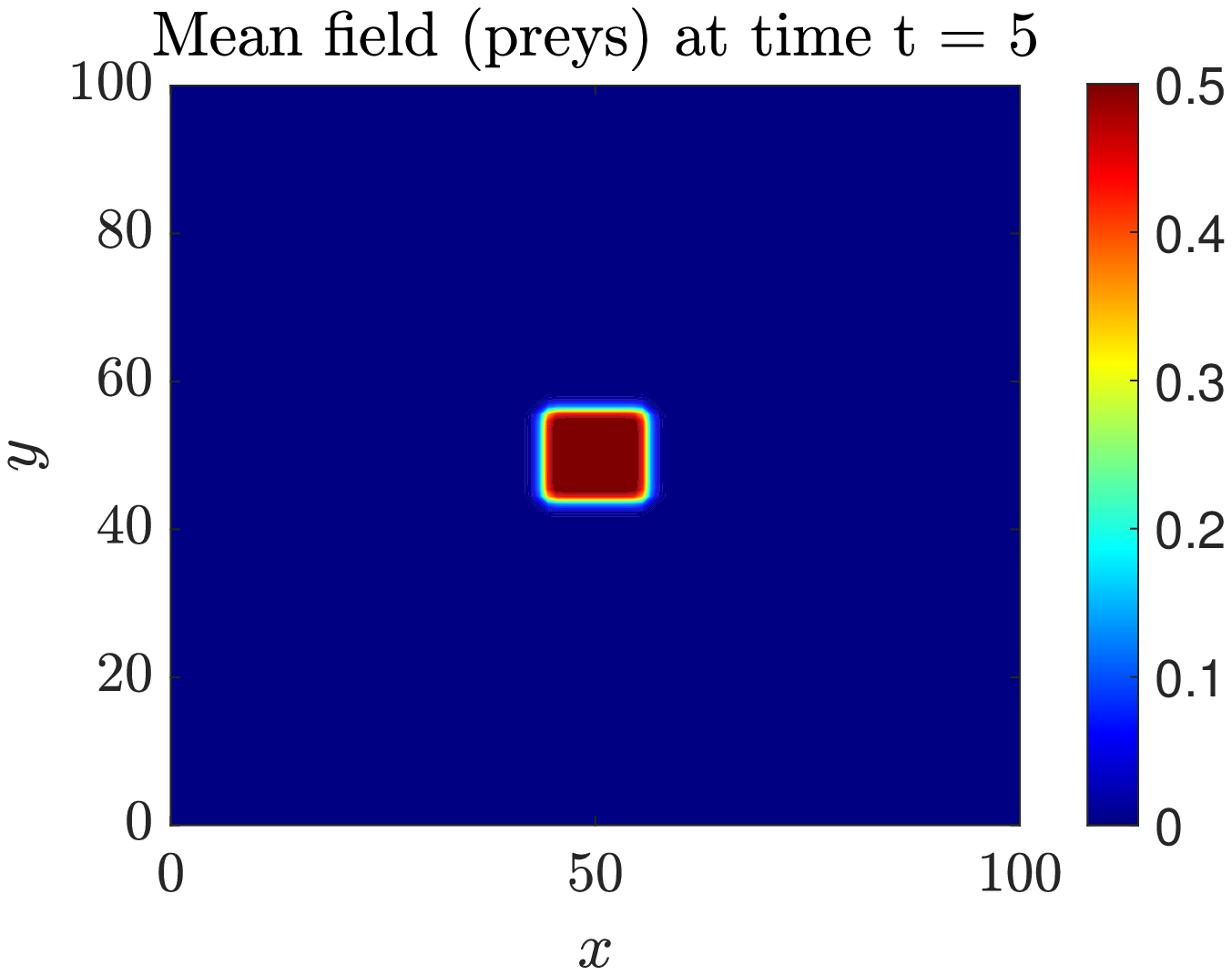}
		\includegraphics[width=0.49\linewidth]{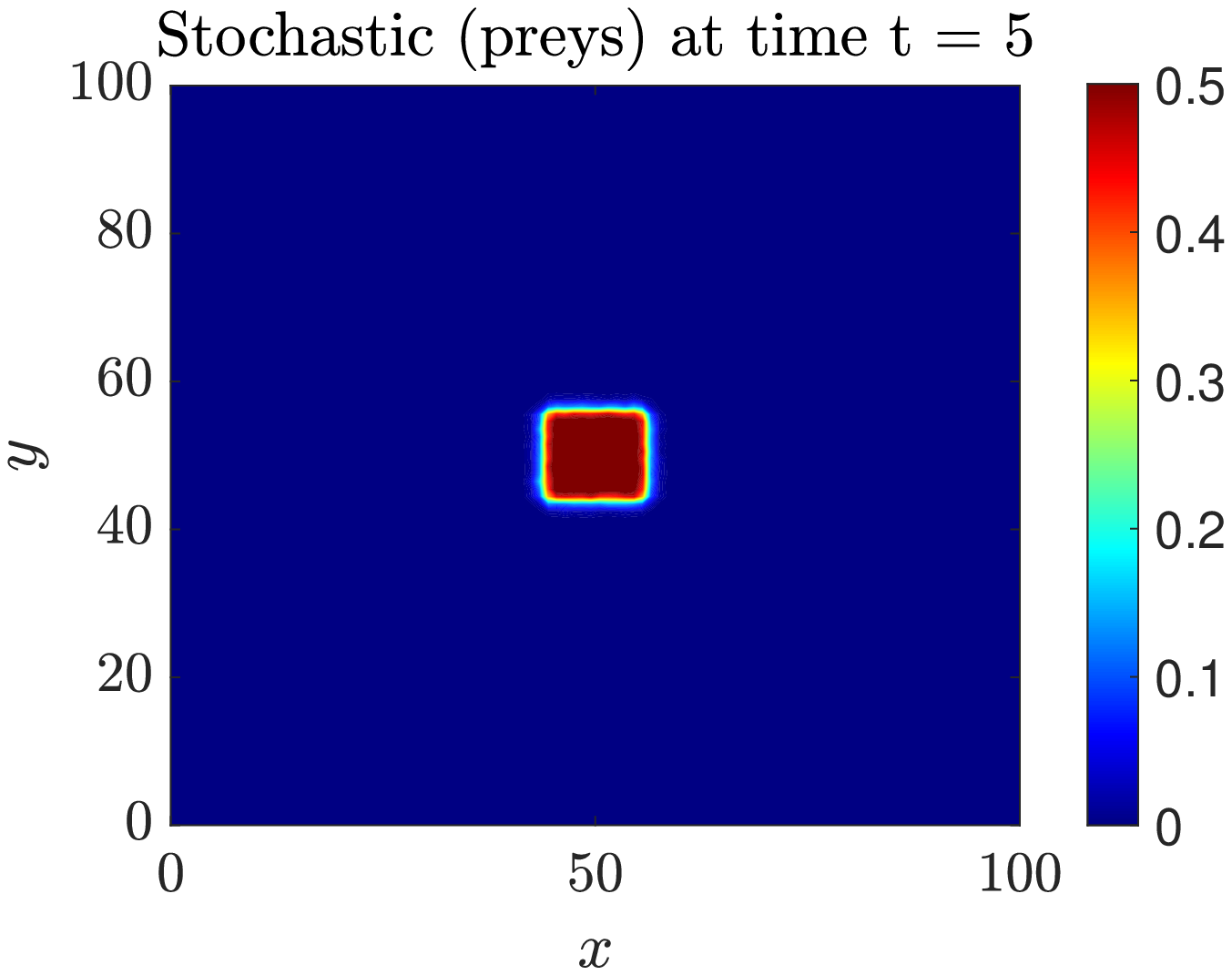}
		\includegraphics[width=0.49\linewidth]{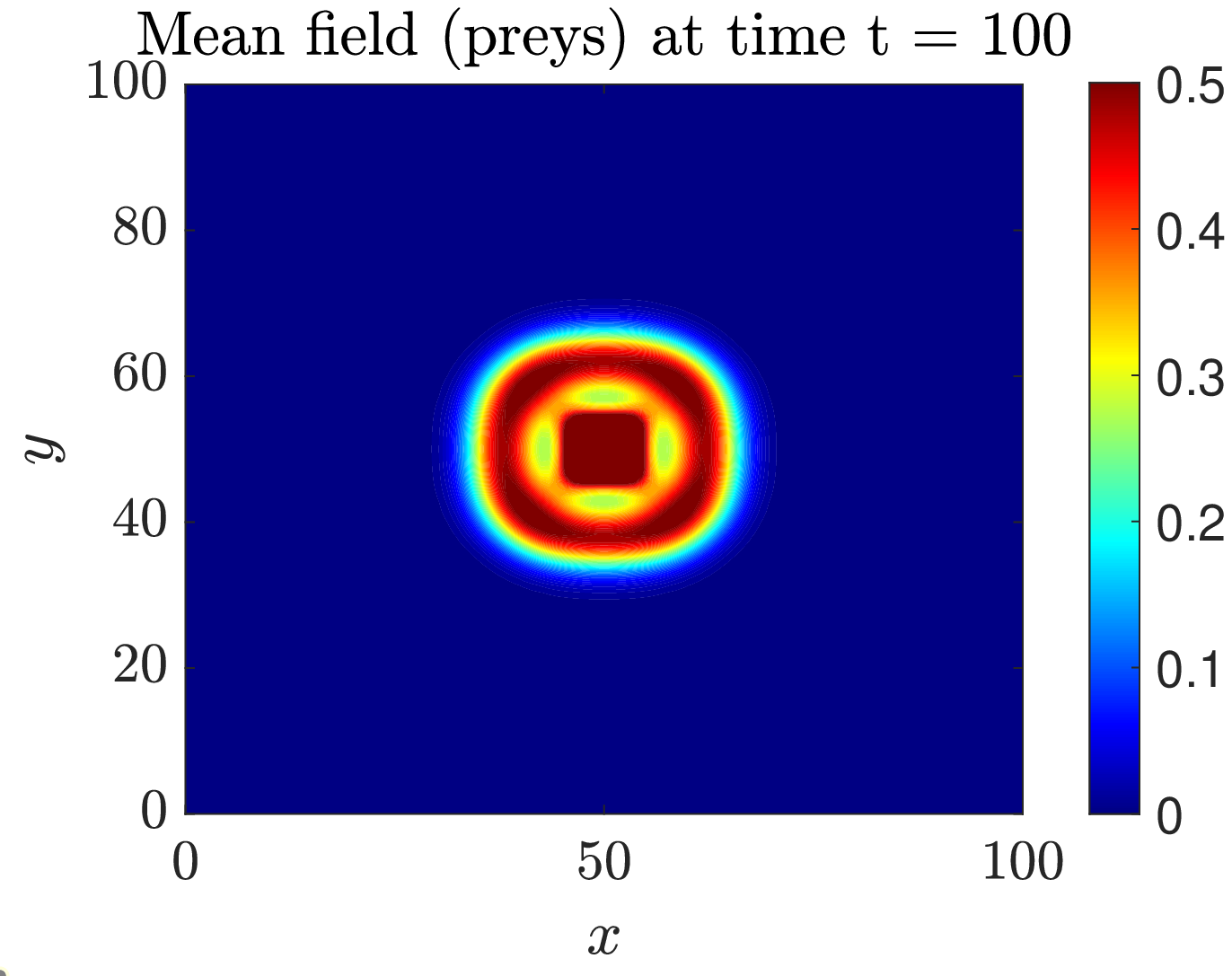}
		\includegraphics[width=0.49\linewidth]{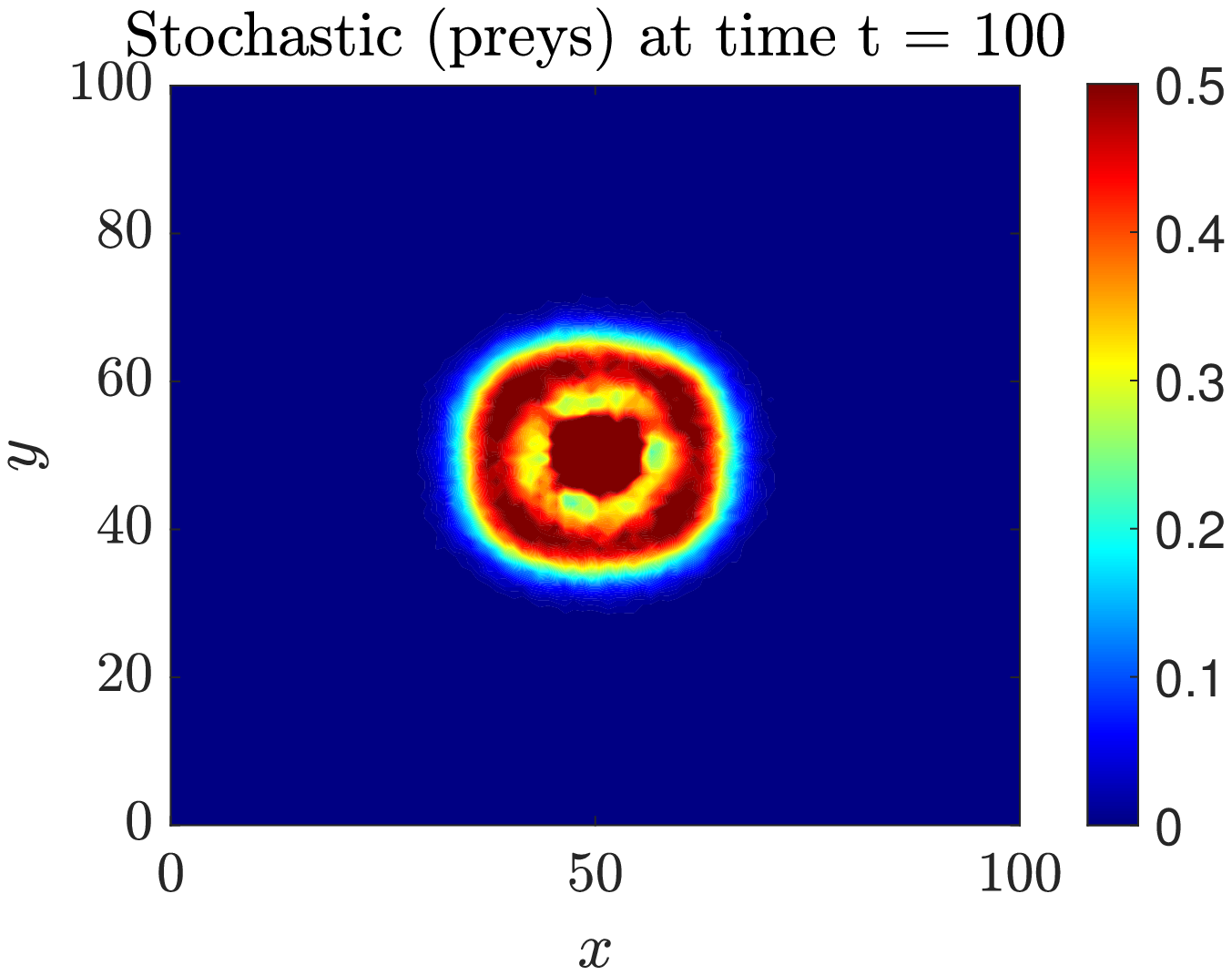}
		\includegraphics[width=0.49\linewidth]{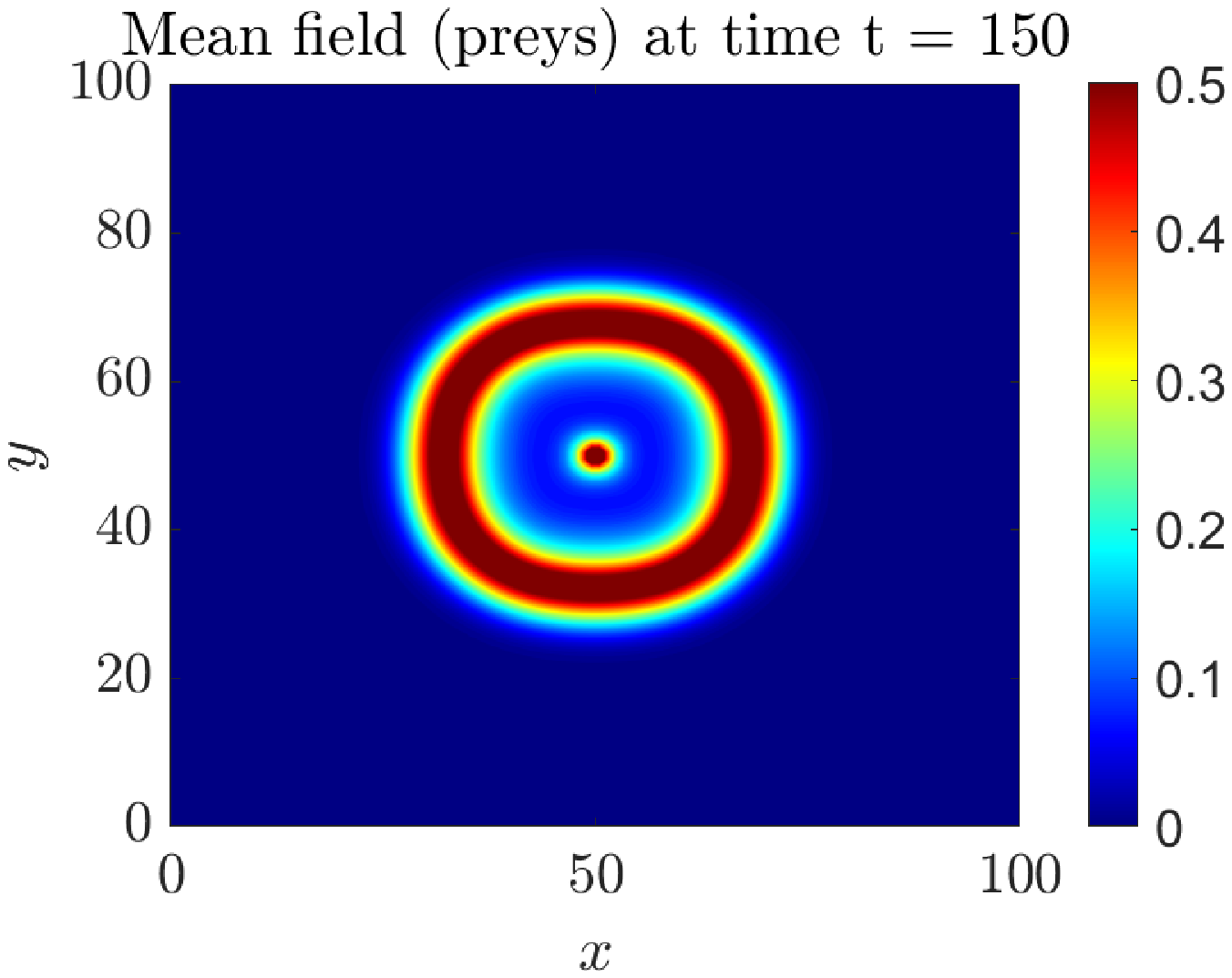}
		\includegraphics[width=0.49\linewidth]{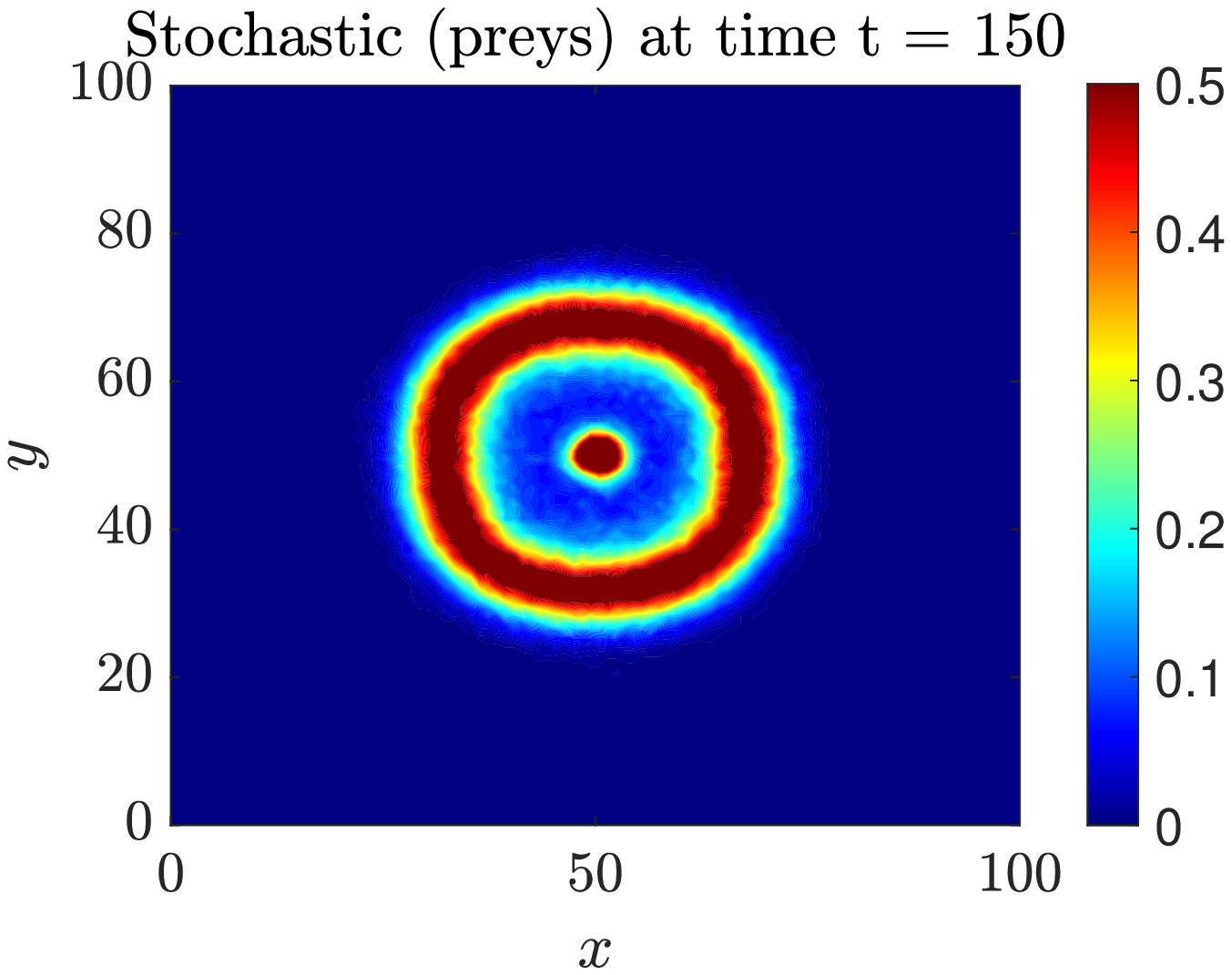}
		\caption{Heterogeneous two dimensional predator-prey model (preys population): simulation of the processes described in \eqref{eq:5a}-\eqref{eq:5b}-\eqref{eq:5c} with the efficient Monte Carlo algorithm and solutions of the mean-field equations \eqref{eq:6} in the two dimensional case for $N_c=1000$. This figure shows three snapshots taken at time $t=5$ (top), $t=100$ (middle), $t=150$ (bottom). On the left, mean-field solutions and on the right, stochastic simulations. }
		\label{fig:figure6}
	\end{figure}
	\begin{figure}
		\centering
		\includegraphics[width=0.49\linewidth]{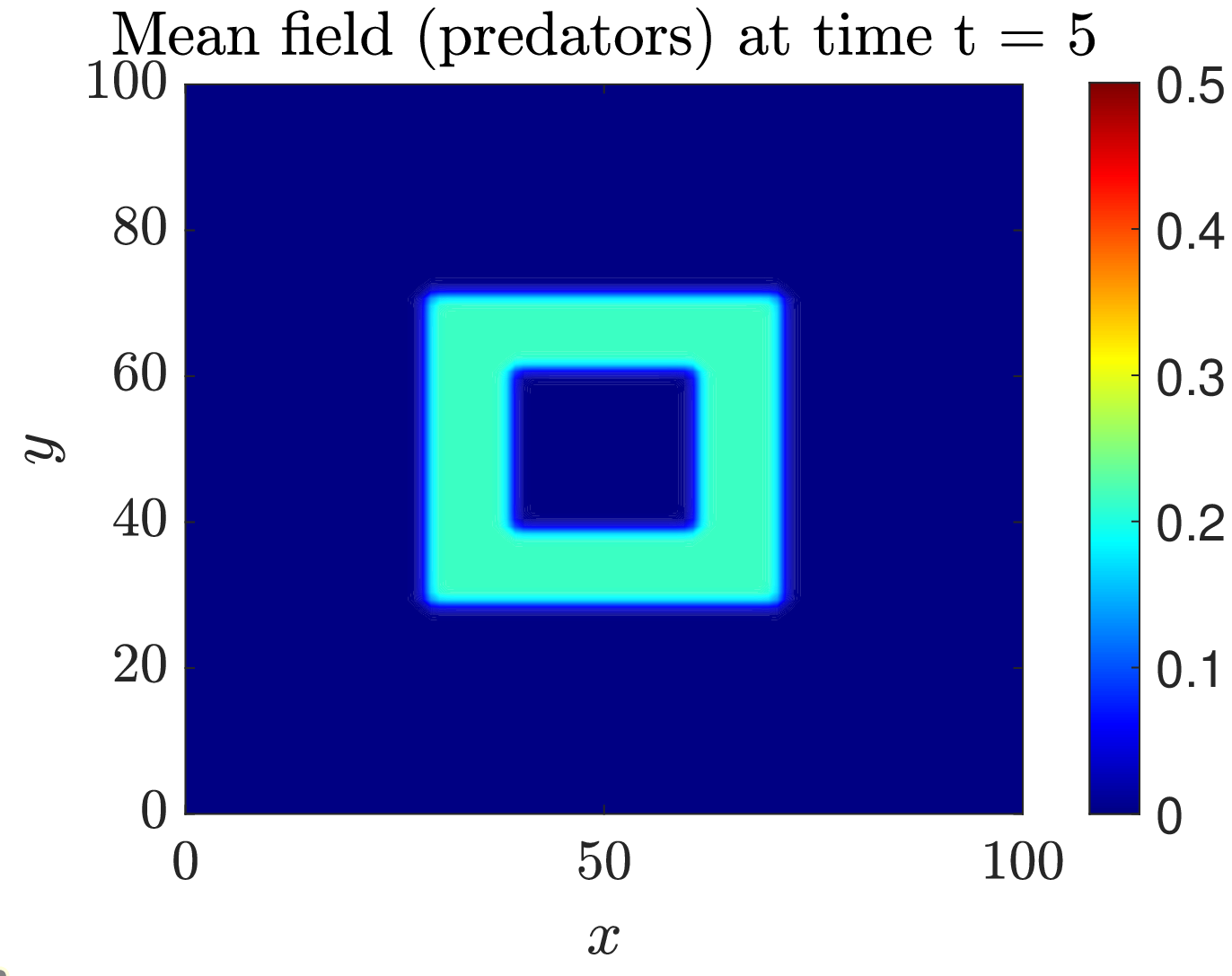}
		\includegraphics[width=0.49\linewidth]{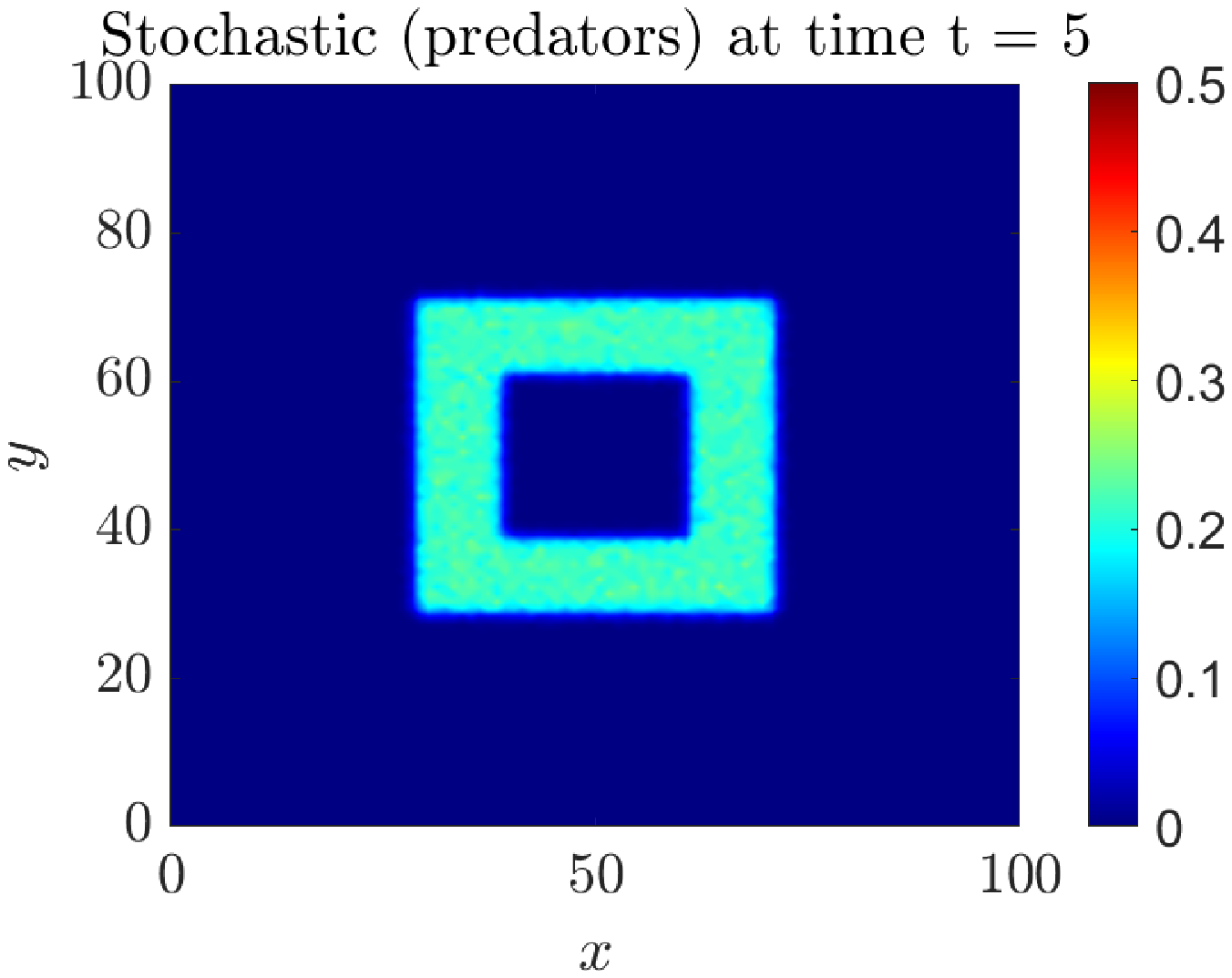}
		\includegraphics[width=0.49\linewidth]{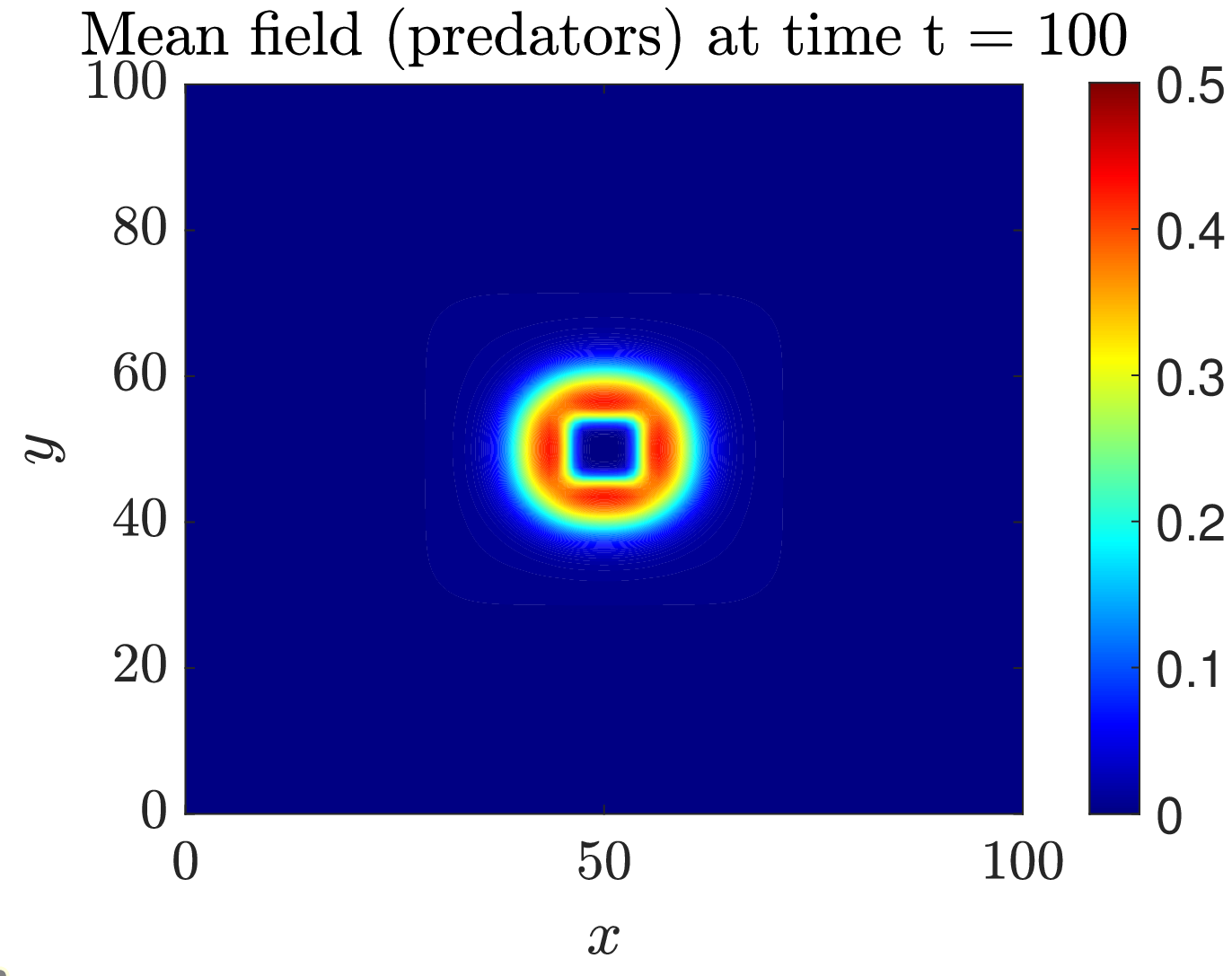}
		\includegraphics[width=0.49\linewidth]{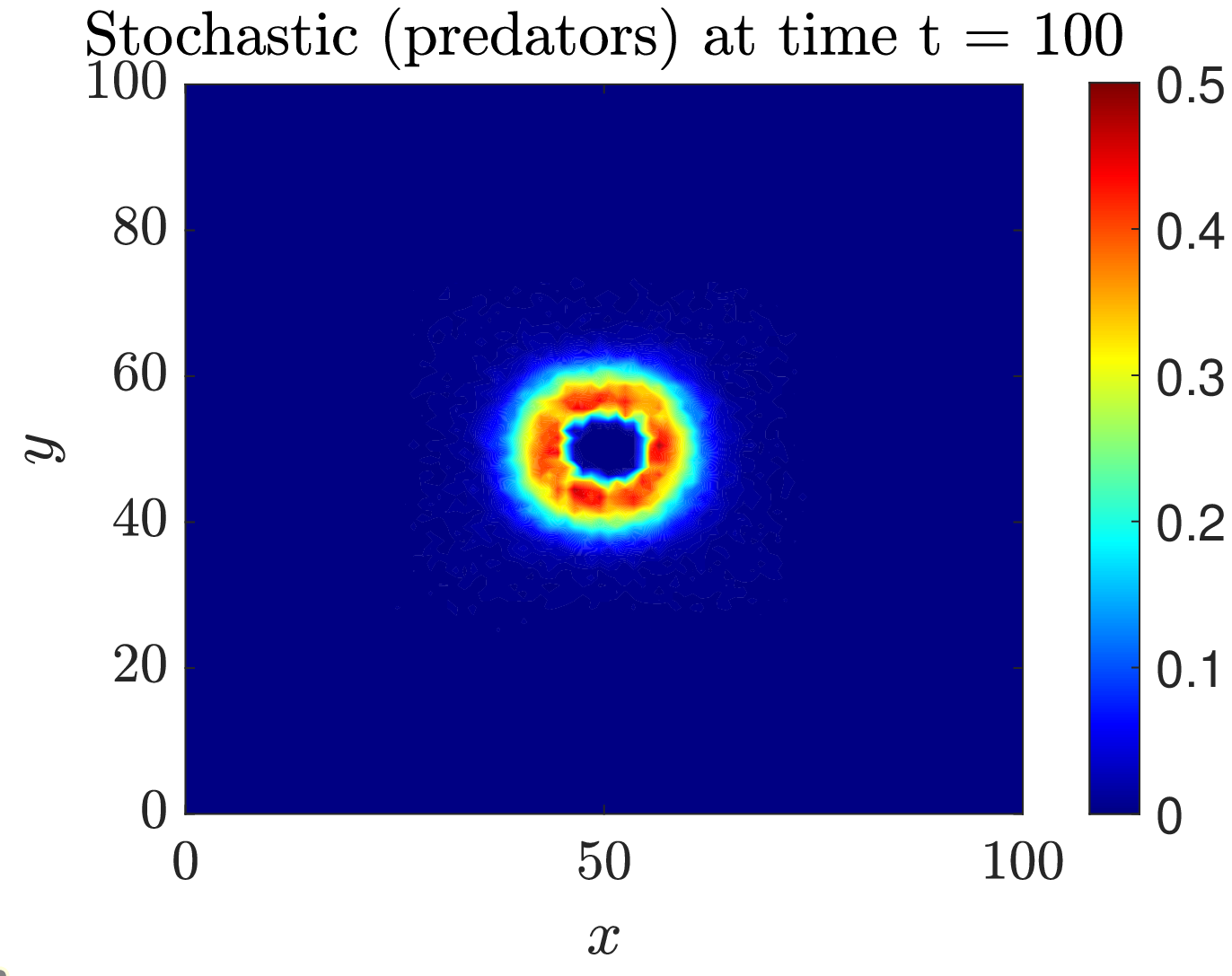}
		\includegraphics[width=0.49\linewidth]{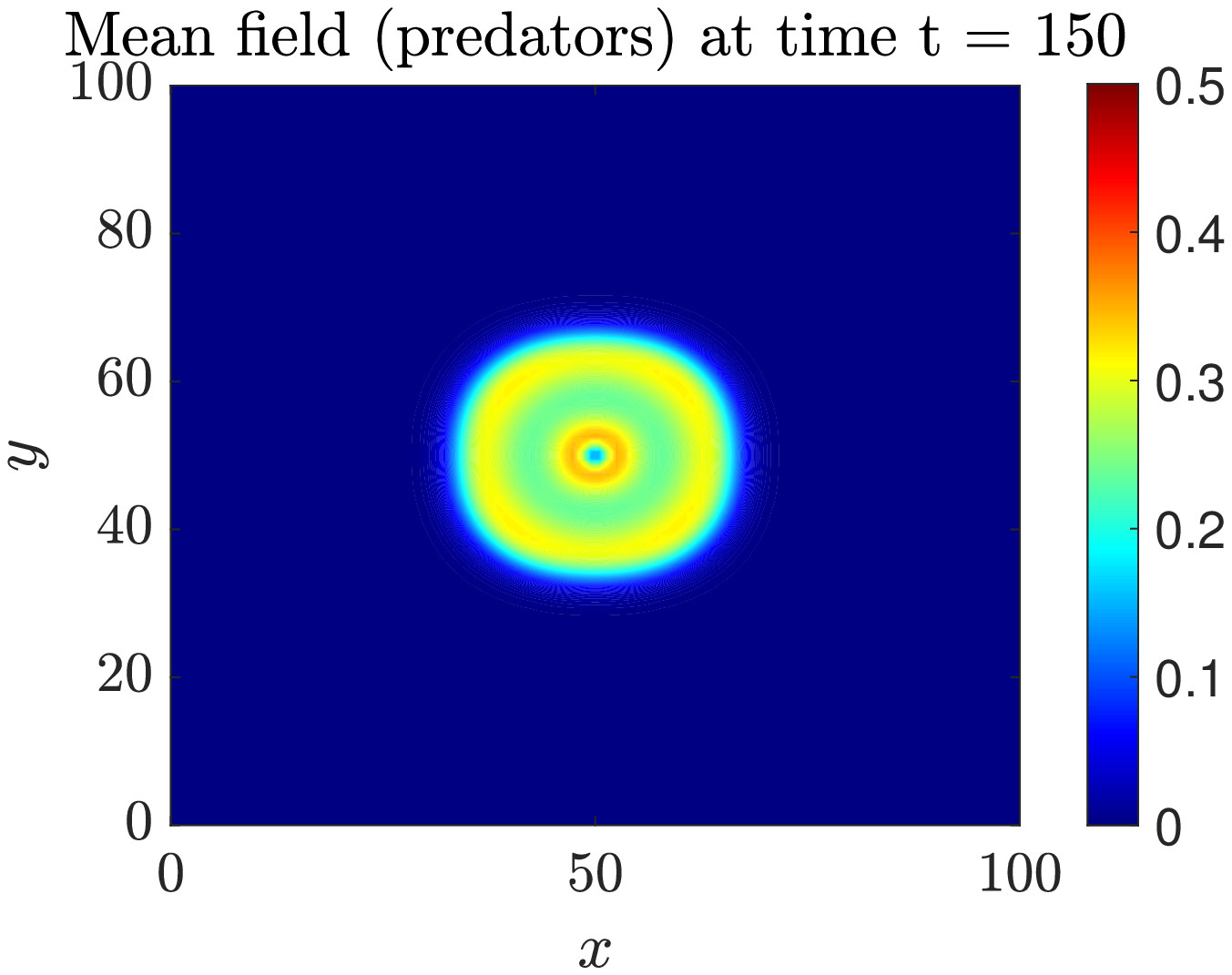}
		\includegraphics[width=0.49\linewidth]{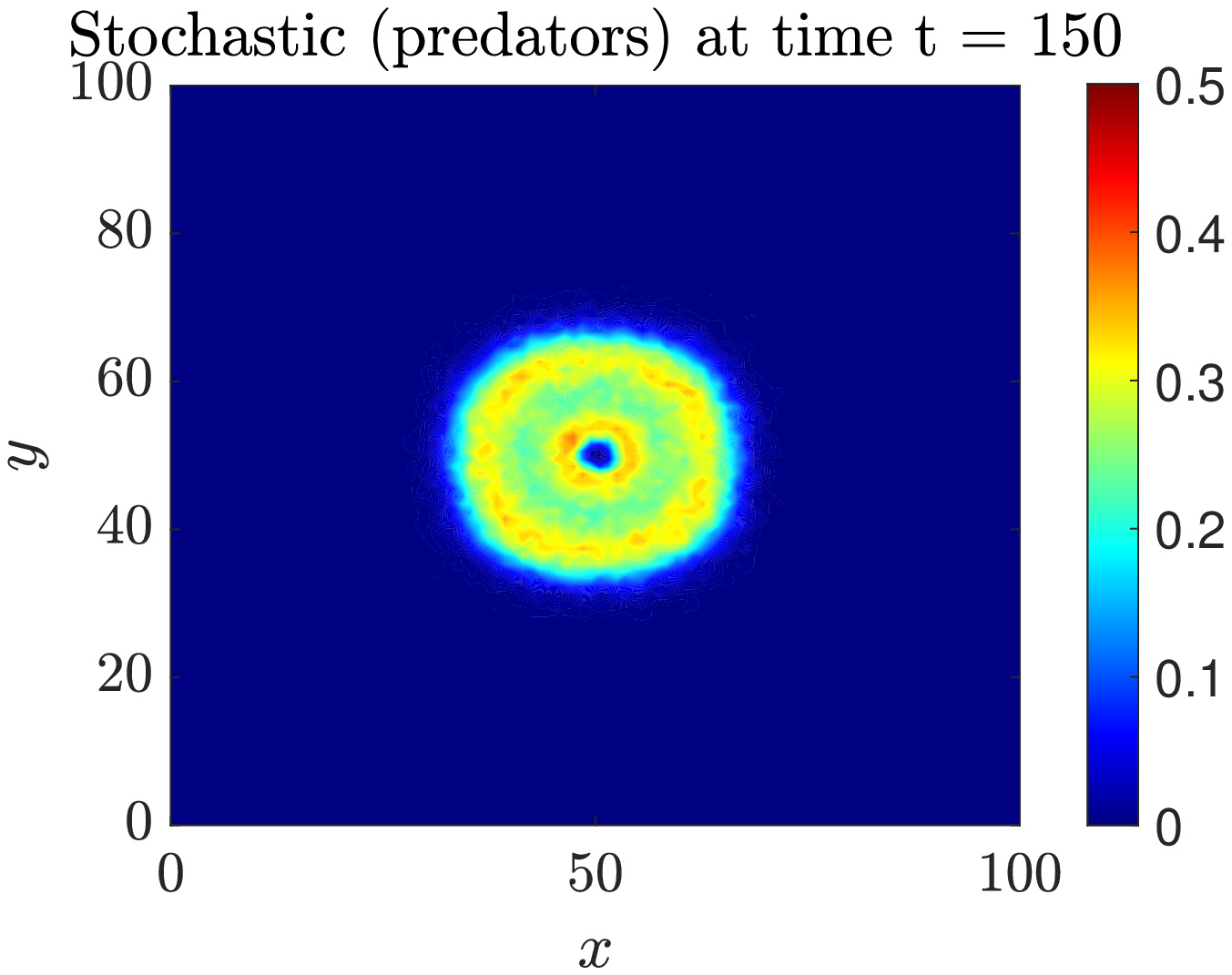}
		
		\caption{Heterogeneous two dimensional predator-prey model (predators population): simulation of the processes described in \eqref{eq:5a}-\eqref{eq:5b}-\eqref{eq:5c} with the efficient Monte Carlo algorithm and solutions of the mean-field equations \eqref{eq:6} in the two dimensional case for $N_c=1000$. This figure shows three snapshots taken at time $t=5$ (top), $t=100$ (middle), $t=150$ (bottom). On the left, mean-field solutions and on the right, stochastic simulations.  }
		\label{fig:figure7}
	\end{figure}  
	In Figure \ref{fig:asymptotic_2d} we see for simplicity just the asymptotic behavior of the predators population. One can show that both populations in long time migrate in the whole available space reaching in each cell the value given by the equilibrium \eqref{equilibrium}. 
	\begin{figure}
		\centering
		\includegraphics[width=0.49\linewidth]{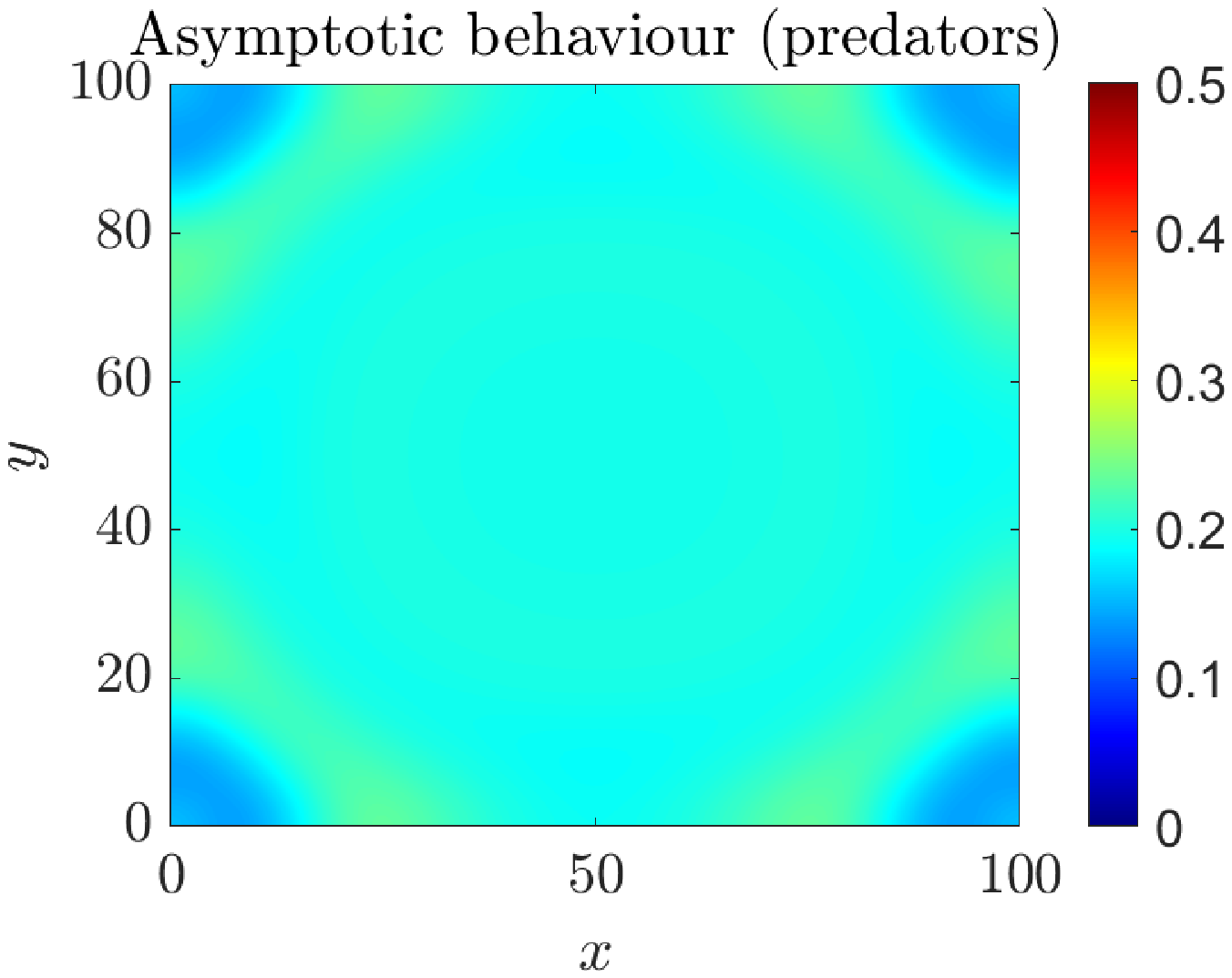}
		\includegraphics[width=0.49\linewidth]{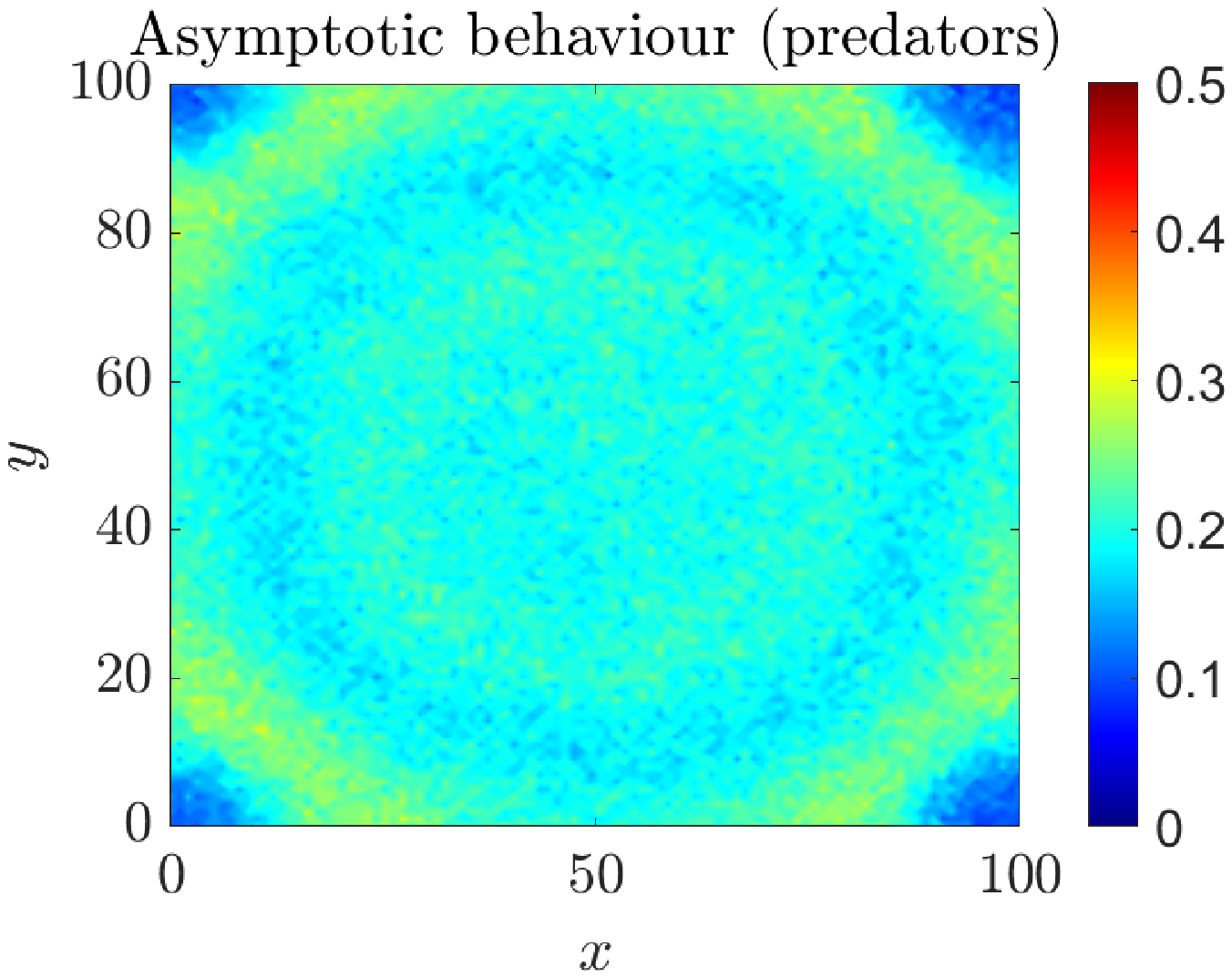}
		\caption{Heterogeneous two dimensional predator-prey model: asymptotic behaviour (at time $t=500$) of predators population at the mean-field (on the right) and stochastic (on the left) level for $N_c=1000$.}
		\label{fig:asymptotic_2d}
	\end{figure} 
	\begin{rmk}
		Note that the mean-field solutions of equations \eqref{eq:6} present a damped behavior in time while the stochastic solutions obtained with the efficient Monte Carlo algorithm have a persistent behavior in time. One can prove that the stochastic persistency is due to a resonant effect,
		\cite{mckane2005predator}.  We refer to section \ref{section5} for a dedicated test to analyzed this behavior.	
	\end{rmk} 
	\subsection{Test 2: Computational cost}
	Let us consider the homogeneous case and assume to fix the parameters $\mu=0.5$, $b^r=1$, $d_1^r=d_2^r=0.3$, and to let the competition parameters $p^r_1, p^r_2$ to vary between $0.1$ and $0.9$. Figure \ref{fig:figure8} shows that the computational cost of the efficient Monte Carlo algorithm is lower than the one of the other algorithms.
	\begin{figure}[h!]
		\centering
		\includegraphics[width=0.49\linewidth]{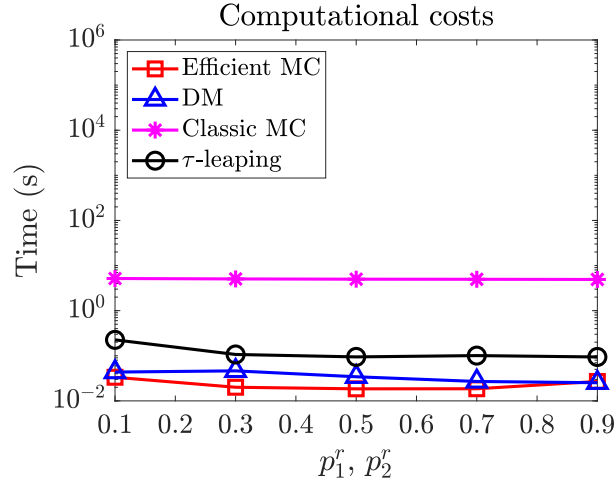}
		\caption{Homogeneous predator-prey model: computational cost of efficient and classic Monte Carlo algorithms, direct method and $\tau$-leaping method as the competition parameters vary for $N$ fixed. The dynamics in \eqref{eq:1a}-\eqref{eq:1b} is simulated for $N=500$, $\mu=0.5$, $b^r=1$, $d_1^r=d_2^r=0.3$, $p_{1}^r=p_{2}^r=0.1,\ldots,0.9$. Markers represent the computational costs relative to the parameters choice indicated.}
		\label{fig:figure8}
	\end{figure}
	Figure \ref{fig:figure8bis} shows that the efficient Monte Carlo algorithm, the direct method and the $\tau$-leaping algorithm have a computational complexity of order $N$ in time while the one of the classic Monte Carlo algorithm is of order $N^2$. 
	\begin{figure}[h!]
		\centering
		\includegraphics[width=0.49\linewidth]{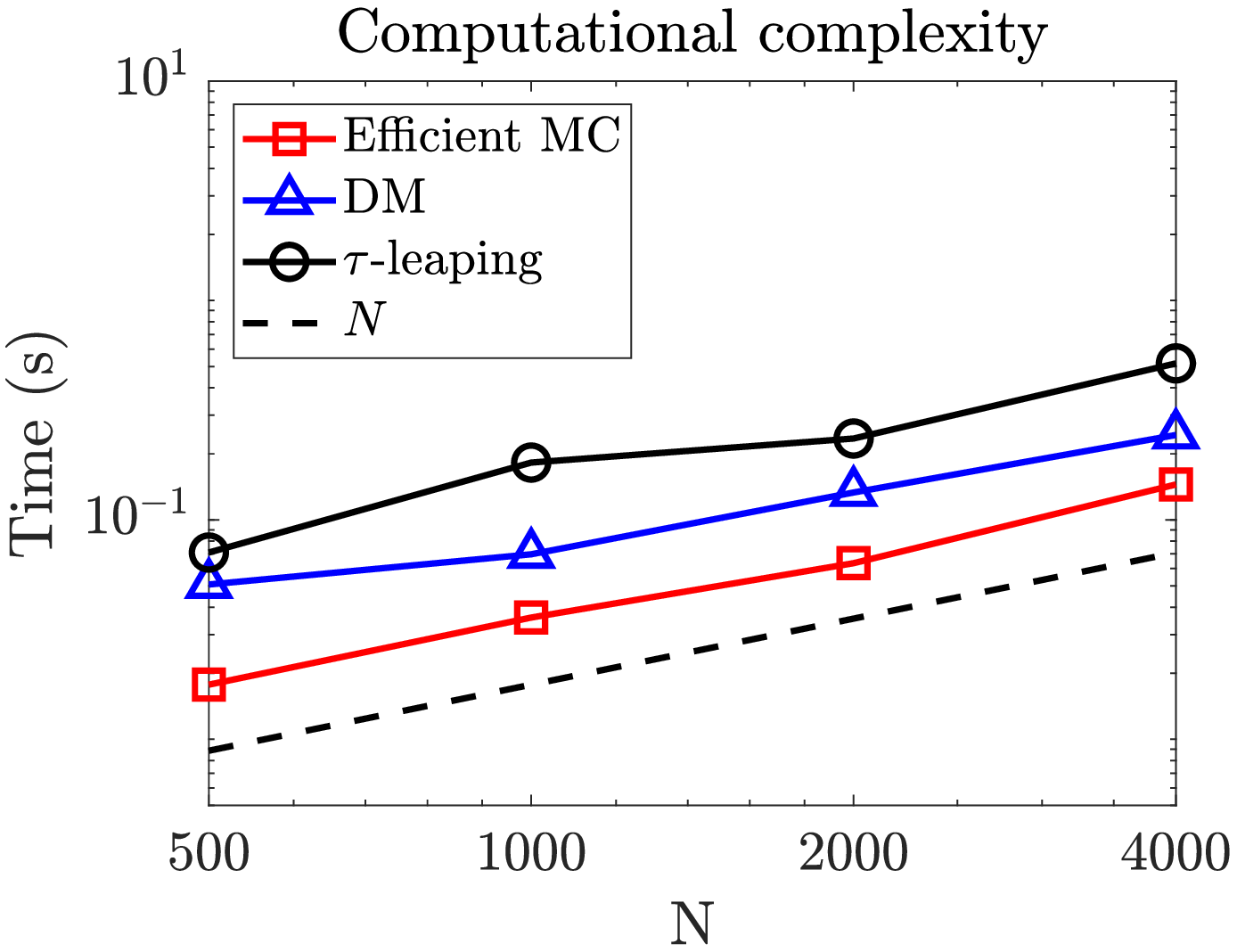}
		\includegraphics[width=0.49\linewidth]{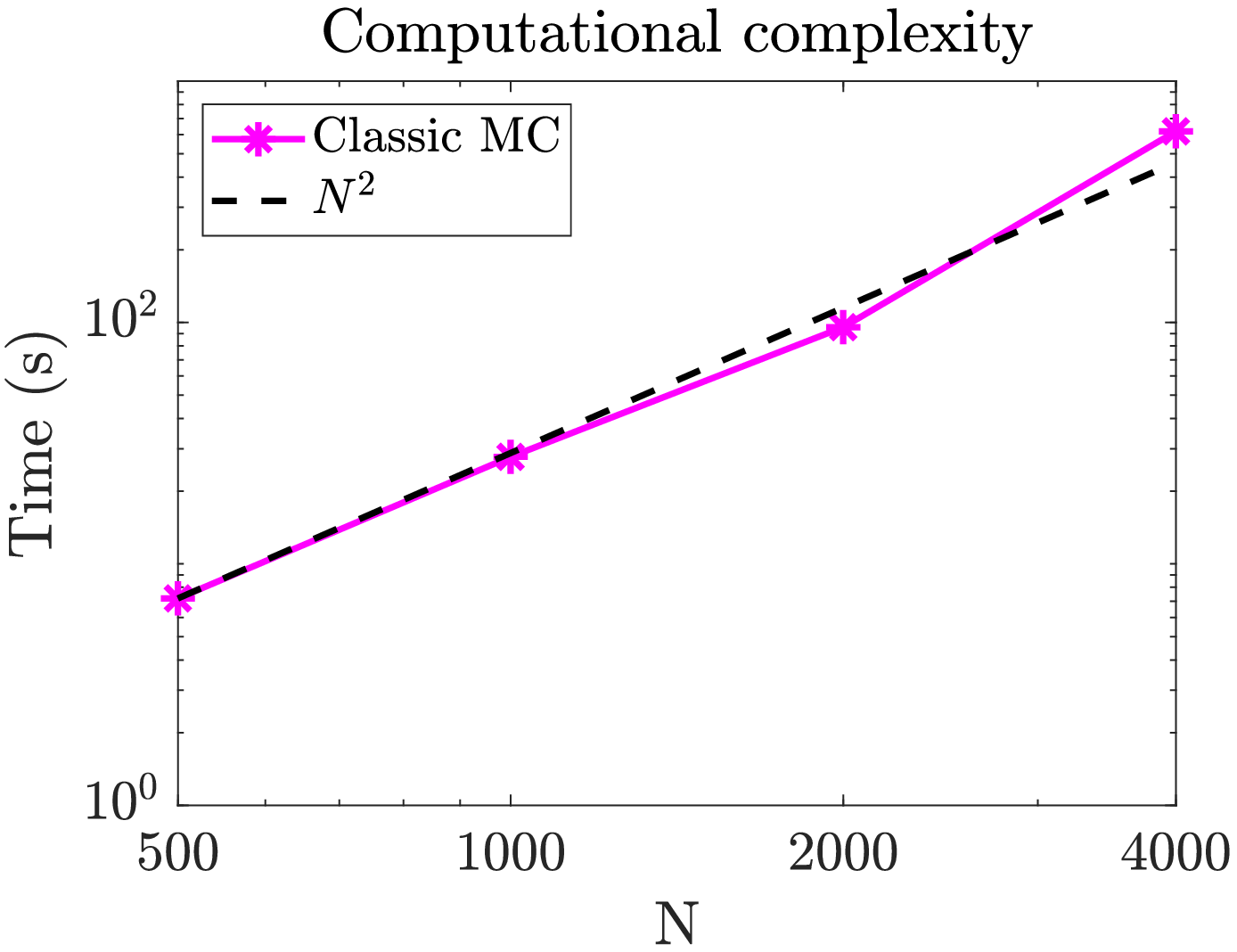}
		\caption{Homogeneous predator-prey model: computational complexity of efficient Monte Carlo algorithm, direct method and $\tau$-leaping method (on the left),  and classic Monte Carlo algorithms (on the right) as $N$ varies.The dynamics in \eqref{eq:1a}-\eqref{eq:1b} is simulated for $N=500,\ldots,4000$, $\mu=0.5$, $b^r=1$, $d_1^r=d_2^r=0.3$, $p_{1}^r=p_{2}^r=0.5$. Markers represent the computational costs relative to the parameters choice indicated. }
		\label{fig:figure8bis}
	\end{figure}
	Figure \ref{fig:figure9} shows the comparison between the computational costs of the stochastic algorithms in the one dimensional heterogeneous case. The dynamics in \eqref{eq:5a}-\eqref{eq:5b}-\eqref{eq:5c} is simulated for $N_c=100$, $q_1=0.3$, $q_2=0.3$ and $b^r=0.1$, $d_1^r=0.1$, $d_2^r=0$, $p_1^r=0.25$, $p_2^r=0.05$, letting the migration parameters $m_1^r$, $m_2^r$ to vary between $0.1$ and $0.9$. The computational cost of the efficient Monte Carlo algorithm is lower than the one of classic algorithms.
	\begin{figure}[H]
		\centering
		\includegraphics[width=0.49\linewidth]{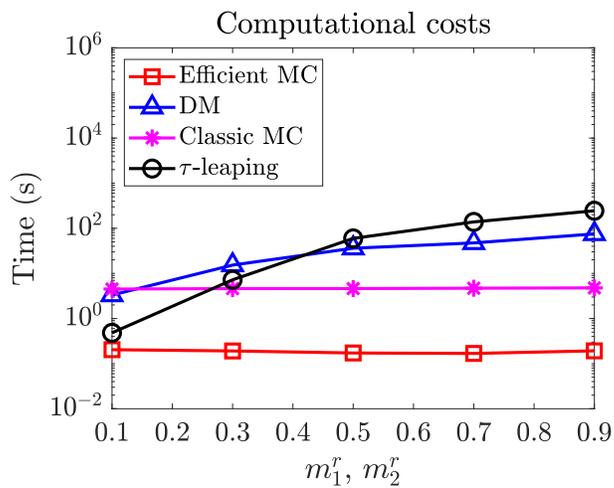}
		\caption{Heterogeneous one dimensional predator-prey model: computational cost of efficient and classic Monte Carlo algorithms, direct method and $\tau$-leaping method as the migration parameters vary, for $N_c$ fixed. The dynamics in \eqref{eq:5a}-\eqref{eq:5b}-\eqref{eq:5c} is simulated for $N_c=100$, $q_1=0.3$, $q_2=0.3$, $b^r=0.1$, $d_1^r=0.1$, $d_2^r=0$, $p_1^r = 0.25$, $p_2^r=0.05$, $m_{1}^r=m_{2}^r=0.1,\ldots,0.9$. Markers represent the computational costs relative to the parameters choice indicated. }
		\label{fig:figure9}
	\end{figure}
	Figure \ref{fig:figure10} shows the computational cost of the stochastic algorithms for different values of the migration rates as $N_c$ varies. On the left the computational costs for fixed migration rates $m_1^r=m_2^r=0.1$ and on the right for $m_1^r=m_2^r=0.9$. Note that the computational cost of the efficient Monte Carlo algorithm is always lower than the one of the classic algorithms.
	\begin{figure}[h!]
		\centering
		\includegraphics[width=0.49\linewidth]{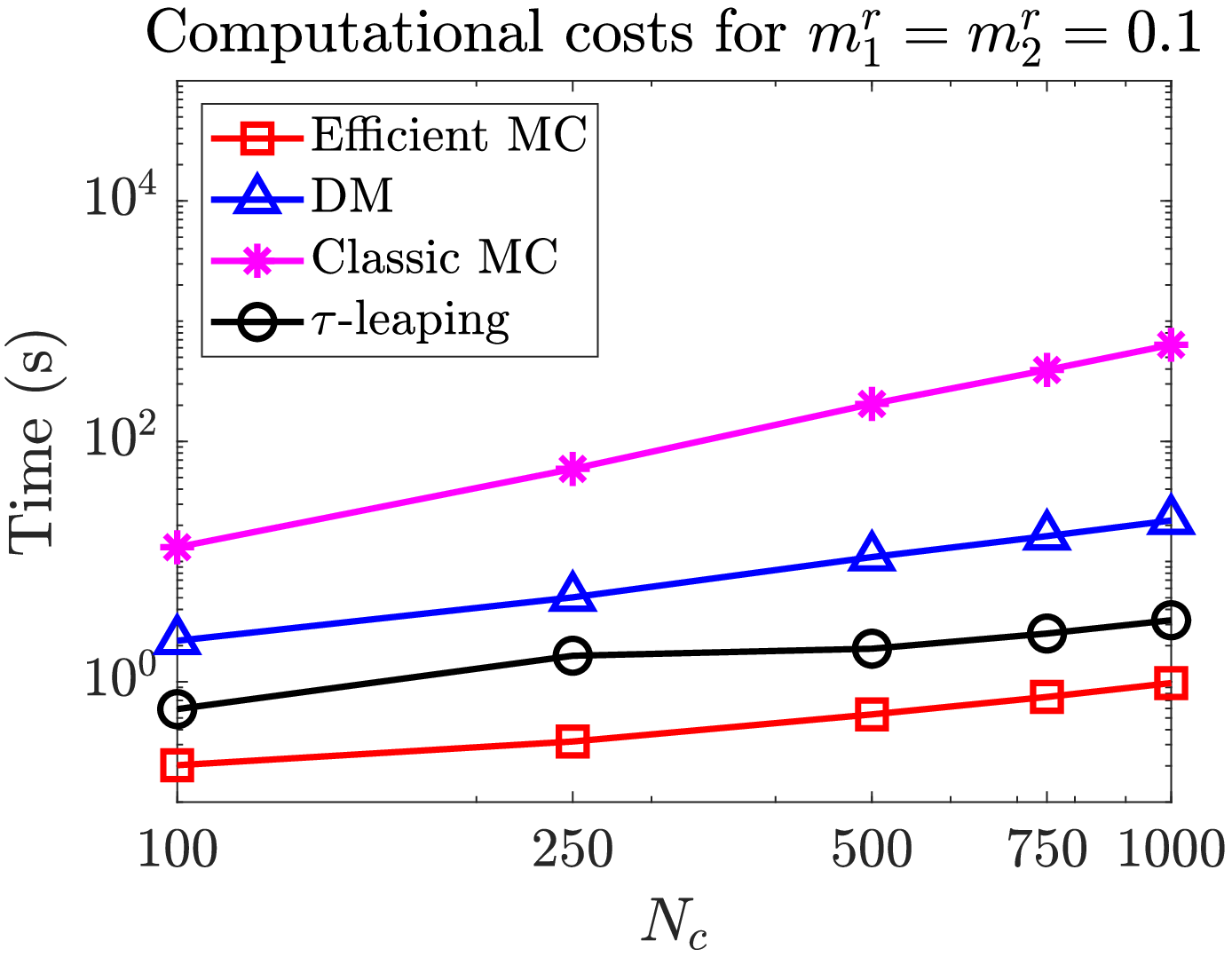}
		\includegraphics[width=0.49\linewidth]{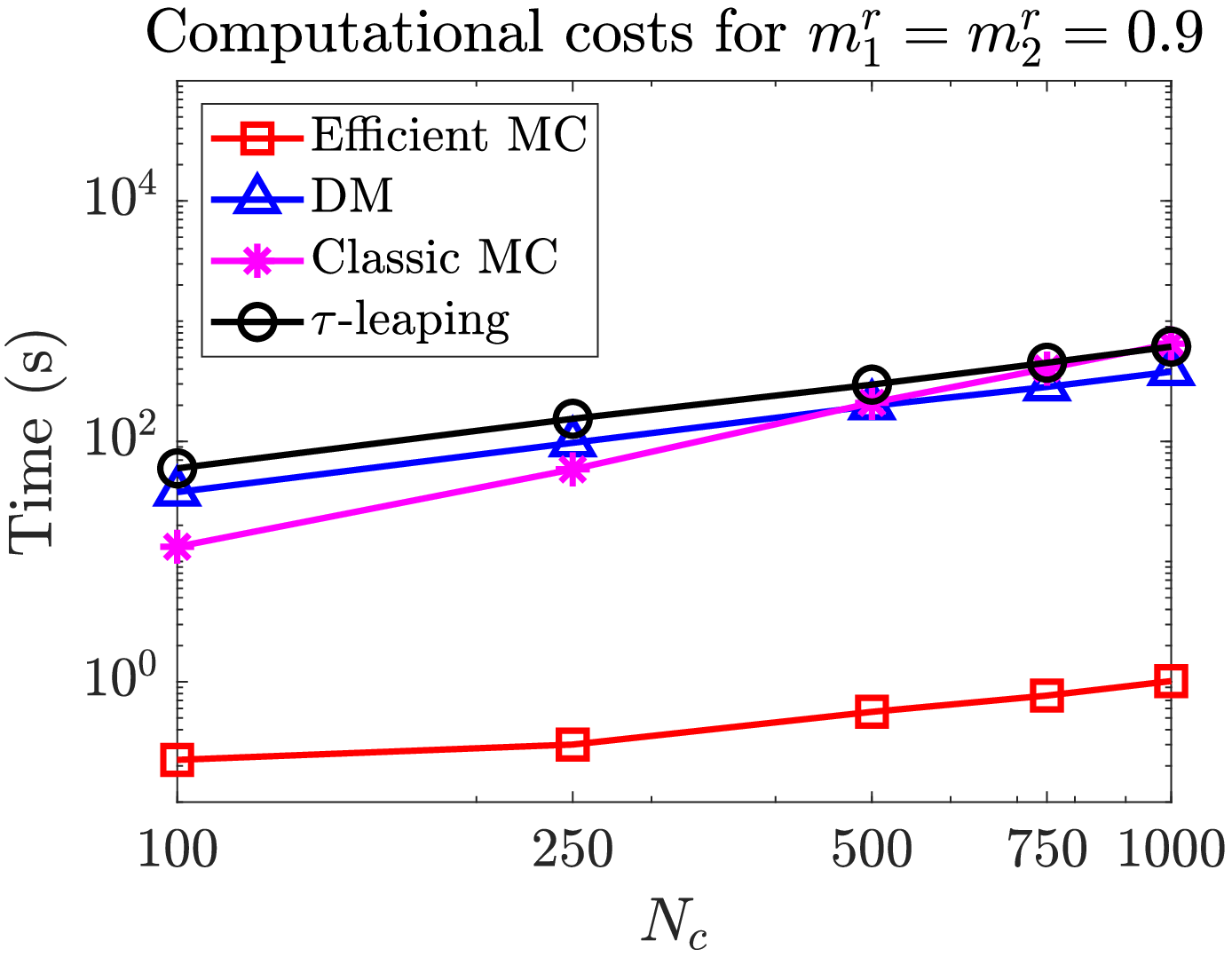}
		\caption{Heterogeneous one dimensional predator-prey model: computational cost of efficient and classic Monte Carlo algorithms, direct method and $\tau$-leaping method as $N_c$ varies. The dynamics in \eqref{eq:5a}-\eqref{eq:5b}-\eqref{eq:5c} is simulated for $N_c=100,\ldots,1000$, $q_1=0.3$, $q_2=0.3$,  $b^r=0.1$, $d_1^r=0.1$, $d_2^r=0$, $p_1^r = 0.25$, $p_2^r=0.05$, $m_{1}^r=m_{2}^r=0.1$(on the left), $m_1^r=m_2^r=0.9$ (on the right). Markers represent the computational costs relative to the parameters choice indicated.}
		\label{fig:figure10}
	\end{figure}
	In Figure \ref{fig:figure11} a comparison between the computational costs of the stochastic algorithms in the two dimensional heterogeneous case. The dynamics is simulated  for $N_c=50$ fixed, $q_1=0.3$, $q_2=0.3$,  $b^r=0.1$, $d_1^r=0.1$ $d_2^r=0$, $p_1^r=0.25$, $p_2^r=0.05$, letting the migration parameters to vary between $0.1$ and $0.9$. Also in this case the efficient Monte Carlo algorithm has a computational cost lower than the one of classic algorithms.
	\begin{figure}[H]
		\centering
		\includegraphics[width=0.49\linewidth]{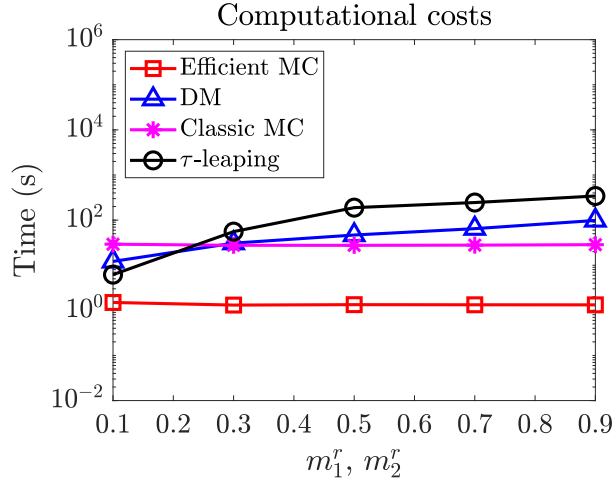}
		\caption{Heterogeneous two dimensional predator-prey model: computational cost of efficient and classic Monte Carlo algorithms, direct method and $\tau$-leaping method as the migration parameters vary, for $N_c$ fixed. The dynamics in \eqref{eq:5a}-\eqref{eq:5b}-\eqref{eq:5c} is simulated for $N_c=50$, $q_1=0.3$, $q_2=0.3$, $b^r=0.1$, $d_1^r=0.1$, $d_2^r=0$, $p_1^r = 0.25$, $p_2^r=0.05$, $m_{1}^r=m_{2}^r=0.1,\ldots,0.9$. Markers represent the computational costs relative to the parameters choice indicated. }
		\label{fig:figure11}
	\end{figure}
		\subsection{Test 3: Accuracy \& performances}
		In the following we will compare the accuracy of the direct method with the one of the efficient Monte Carlo algorithm and of the $\tau$-leaping method. We first focus on the homogeneous case and we define the errors as \begin{equation}\label{eq:errorH_DM} 
			E_f^N = \Vert  f^N_{DM}(t)-f^N \Vert_{\infty},\quad 	E_g^N = \Vert  g^N_{DM}(t)-g^N \Vert_{\infty},
		\end{equation}
		where $f^N_{DM}(t)$, $g^N_{DM}(t)$ denote the simulations obtained with the direct method and $f^N(t)$, $g^N(t)$ denote the simulations obtained either with the efficient Monte Carlo or with the $\tau$-leaping algorithms for the predators and preys populations, respectively. Figure	\ref{fig:figure_error}
		shows the errors $E_f^N$, $E_g^N$ as $N$ varies and for fixed birth, competition and death parameters (see Table \ref{tab:all_parameters}). Note that the error related to the simulations obtained with the efficient Monte Carlo algorithm is lower than the one related to the simulated $\tau$-leaping solutions for almost every $N$. Note also that both errors decrease as the sample size increases reaching a constant value when $N$ is large enough.
		\begin{figure}[H]
			\centering
			\includegraphics[width=0.49\linewidth]{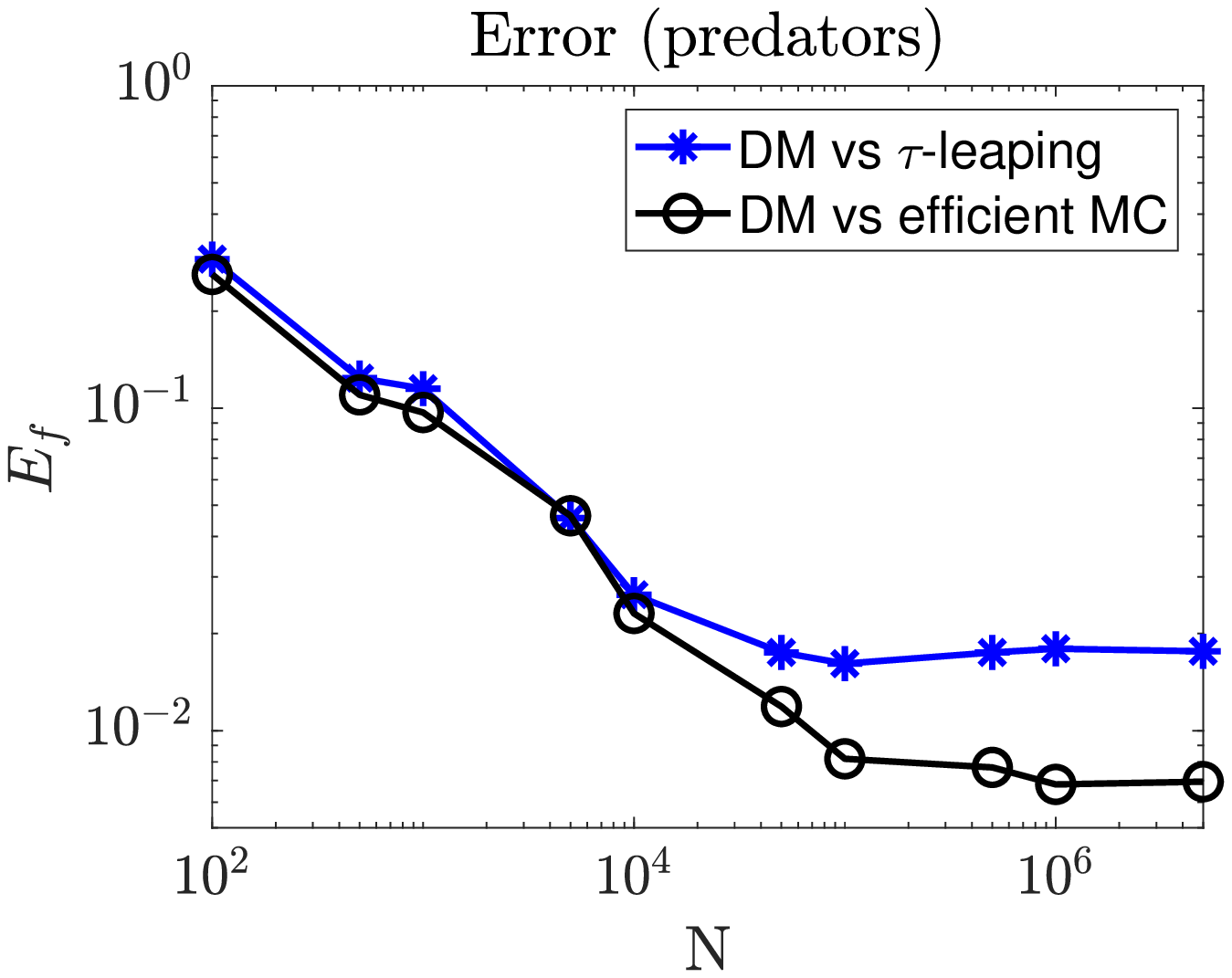}
			\includegraphics[width=0.49\linewidth]{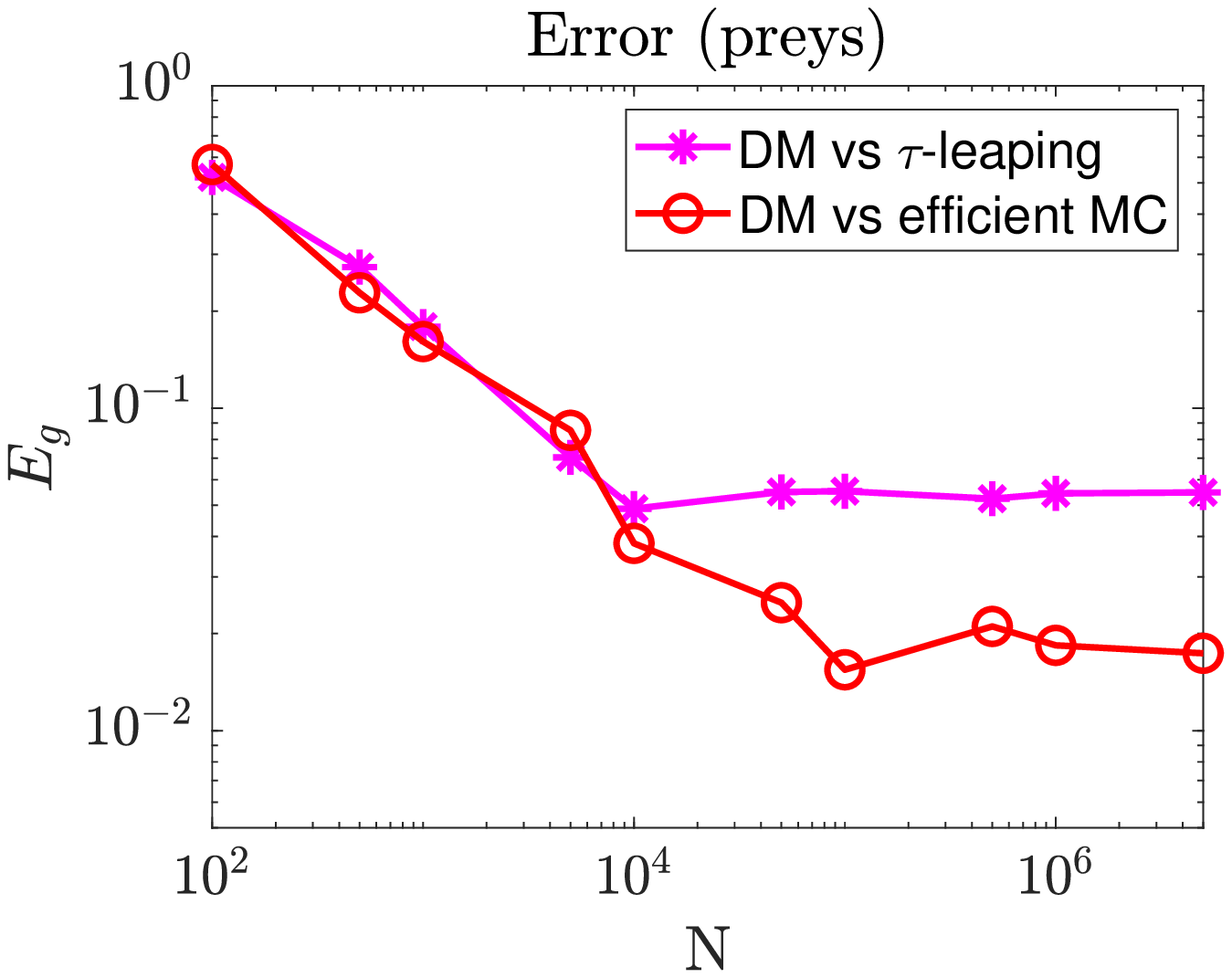}
			\caption{Homogeneous predator-prey model: accuracy of the efficient Monte Carlo and of the $\tau$-leaping algorithms with respect to the direct method as $N$ varies. The dynamics in \eqref{eq:1a}-\eqref{eq:1b} is simulated for $N=10^2,\ldots, 5 \times 10^6$, $\mu=0.5$, $b^r=0.1$, $d_1^r=0.1$, $d_2^r=0$, $p_1^r = 0.25$, $p_2^r=0.05$. Markers correspond to the values $E_f^N, E_g^N$ computed as in equations \eqref{eq:errorH_DM} for a fixed $N$. }
			\label{fig:figure_error}
		\end{figure} The same results can be obtained in the heterogeneous case with the parameters choice reported in Table \ref{tab:all_parameters}, as show in Figure \ref{fig:figure_error_NH}. Here, for simplicity we focus on the one dimensional case and we define the errors as 
		\begin{equation}\label{eq:errorNH_DM}
			\begin{split}
				&E_f^{N_c} = \left\langle  \max_{t} \vert f^{N_c}(x,t) -f^{N_c}(x,t) \vert \right\rangle_x,\quad E_g^{N_c} = \left\langle  \max_{t} \vert g^{N_c}(x,t) -g^{N_c}(x,t) \vert \right\rangle_x,
			\end{split}
		\end{equation} where $\left\langle \cdot \right\rangle $ denotes the expected value with respect to $x$.	\begin{figure}[H]
			\centering
			\includegraphics[width=0.49\linewidth]{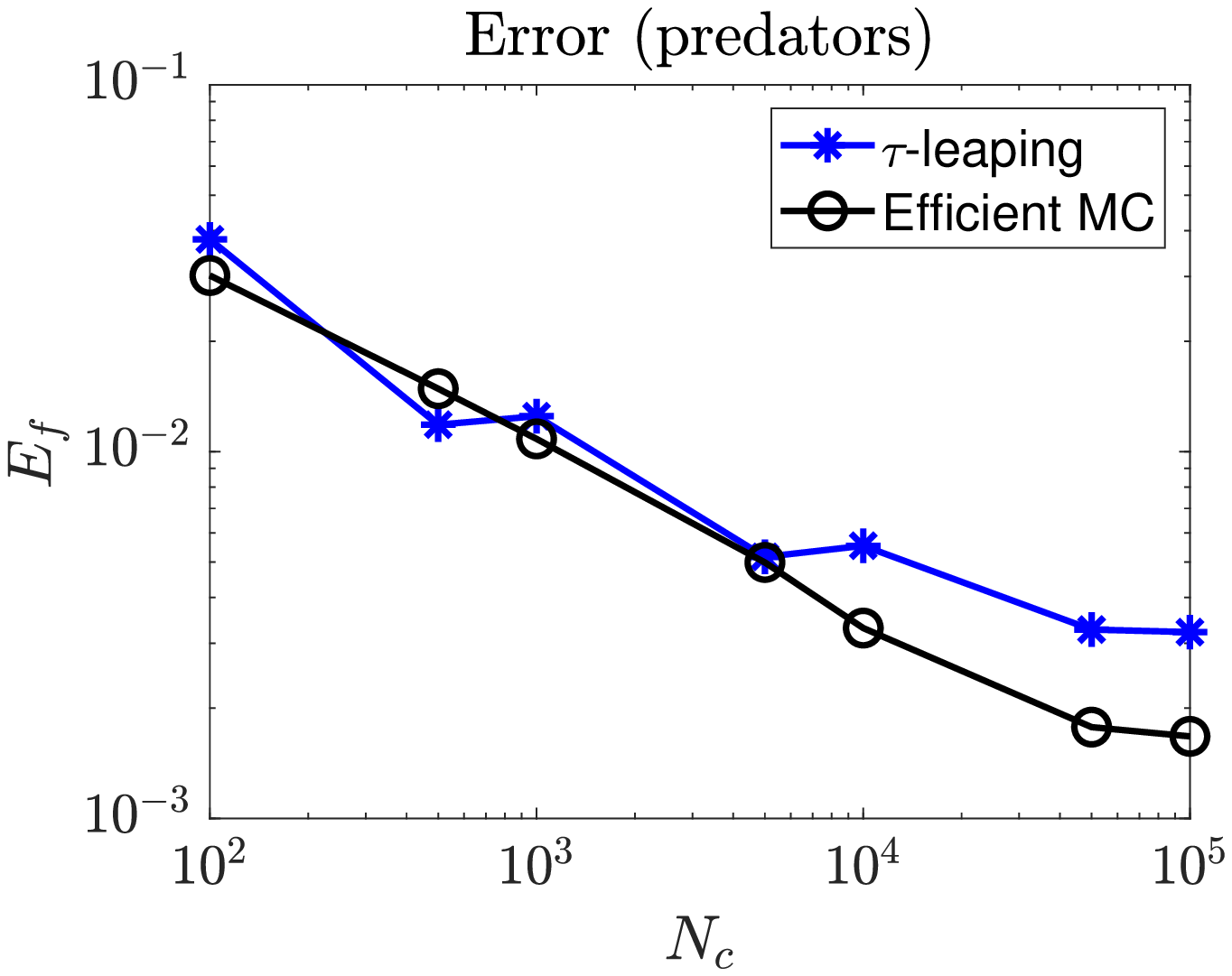}
			\includegraphics[width=0.49\linewidth]{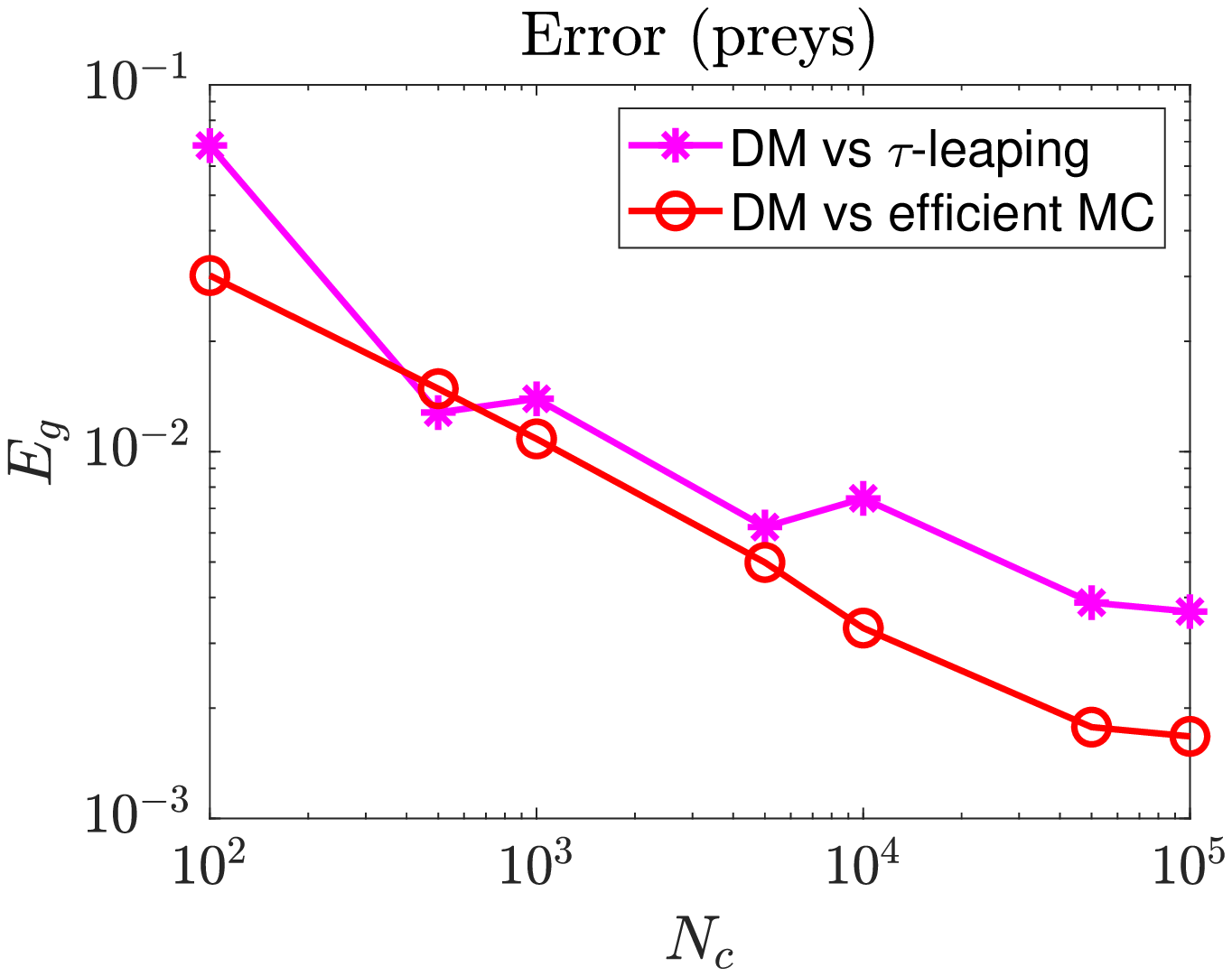}
			\caption{Heterogeneous predator-prey model: accuracy of the efficient Monte Carlo and of the $\tau$-leaping algorithms with respect to the direct method as $N_c$ varies. The dynamics in \eqref{eq:5a}-\eqref{eq:5b}-\eqref{eq:5c} is simulated for $N_c=10^2,\ldots, 10^5$, $q_1=q_2=0.3$, $b^r=0.1$, $d_1^r=0.1$, $d_2^r=0$, $p_1^r = 0.25$, $p_2^r=0.05$, $m_1=0.5$, $m_2=0.5$. Markers correspond to the values $E_f(N_c), E_g(N_c)$ computed as in equations \eqref{eq:errorNH_DM} for a fixed $N_c$. }
			\label{fig:figure_error_NH}
		\end{figure} 
		Figure \ref{fig:figure_tradeoff} 
		shows the trade off between the error $E_f$ and the computational costs of the efficient Monte Carlo algorithm and of the $\tau$-leaping method in both the homogeneous and heterogeneous case for the predators dynamics. Note that both errors decrease as the computational time increases, that is as the sample size increases.  Note also that in the homogeneous case the computational cost is lower but the error is higher than in the heterogeneous case. Here, for simplicity we focus on the predators dynamics but the same results can be obtained if we focus on the preys dynamics.
		\begin{figure}[H]
			\centering
			\includegraphics[width=0.49\linewidth]{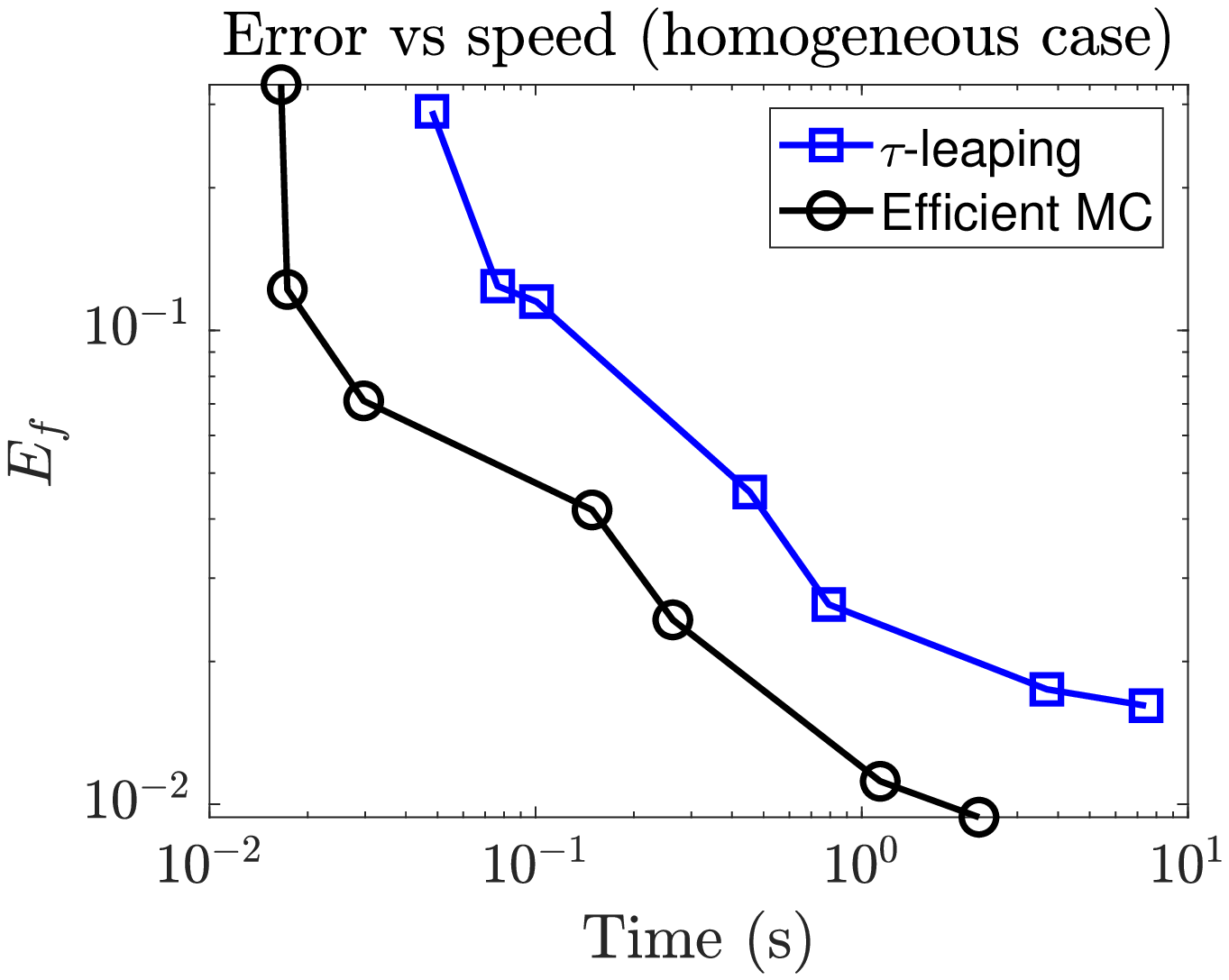}
			\includegraphics[width=0.49\linewidth]{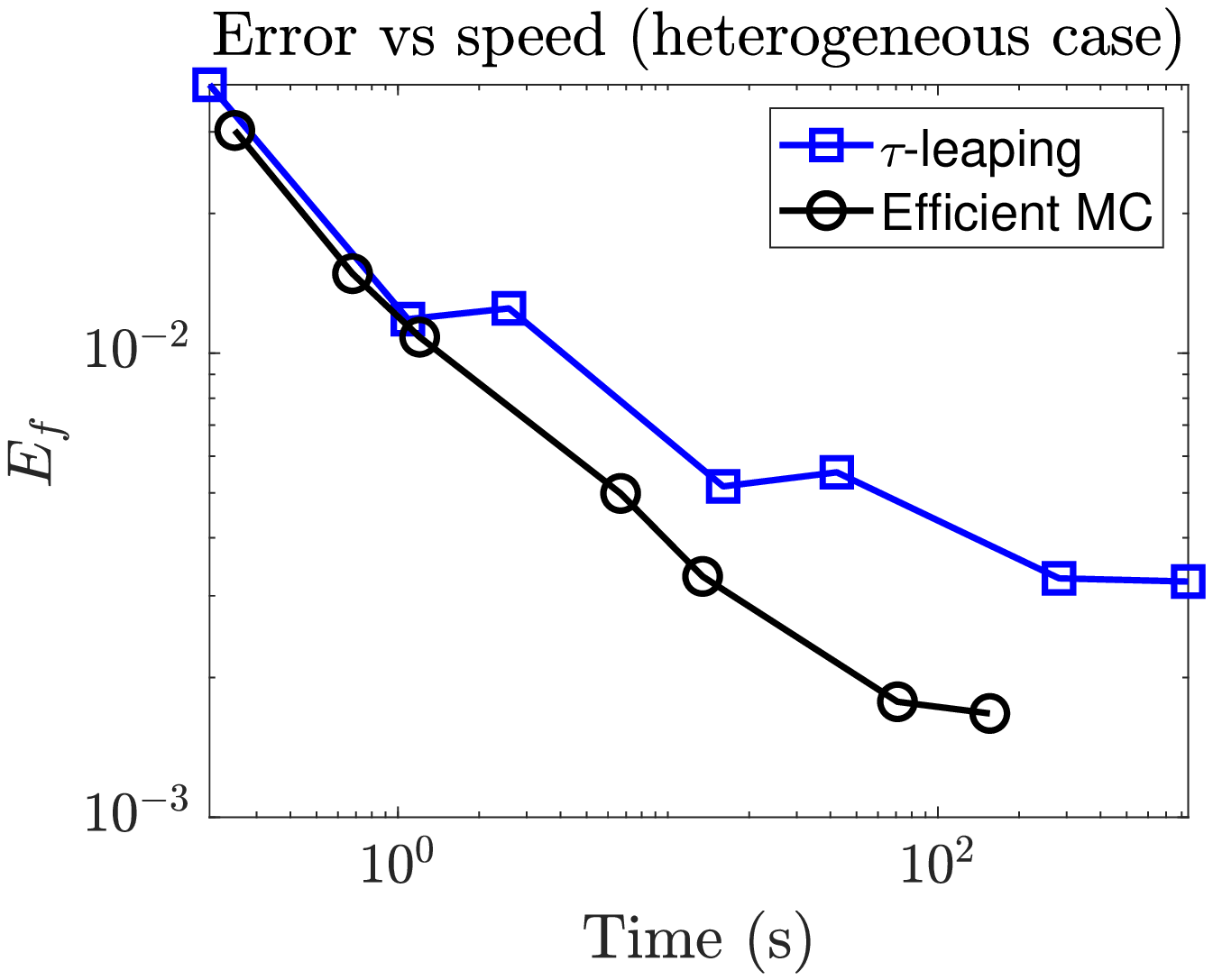}
			\caption{Homogeneous and heterogeneous one dimensional predator-prey model: trade-off of error $E_f^{N_c}$ computed as in equation \eqref{eq:errorH_DM}-\eqref{eq:errorNH_DM} vs computational cost of the efficient Monte Carlo and of the $\tau$-leaping algorithms as the computational cost varies. On the left the trade-off in the  homogeneous case, on the right in the heterogeneous case. Markers correspond to the values $E_f^{N_c}$ in relation to the computational costs. }
			\label{fig:figure_tradeoff} 
	\end{figure} 
	
	\subsection{Test 4: Stochastic persistency}\label{section5}
	We have observed in Figure \ref{fig:figure1} that both mean-field solutions and stochastic simulations present an oscillatory behavior. However, the nature of the oscillations seems to be different. Following the idea in \cite{mckane2005predator}, we can prove that oscillations in predators and preys stochastic simulations should be of order $1/\sqrt{N}$, as the error of the Monte Carlo method, but indeed are amplified by a large factor due to noise resonant effect. 
	
		In order to analyze the nature of oscillations we introduce the following linearized model corresponding to a perturbation of the mean-field dynamics \eqref{eq:A3b} in absence of spatial diffusion, see also Appendix \ref{app:homo},
		\begin{equation} \label{eq:C1b} 
			\frac{d}{dt}	\begin{bmatrix}
				{f^N} \\ 
				{g^N}
			\end{bmatrix} = \Psi \begin{bmatrix}
				f^N \\ 
				g^N 
			\end{bmatrix} + \Phi \xi, 
		\end{equation}
		where $\Psi$ is the Jacobian matrix of the mean-field equations evaluated at the equilibrium $(f^*,g^*)$, defined in \eqref{equilibrium}, $\Phi$ corresponds to the following matrix
		\[
		\Phi= \begin{bmatrix}
			0 &  \sqrt{2\tilde{p}_1^rf^*g^*} & 0 & -\sqrt{\tilde{d}_1^rf^*}  \\
			\sqrt{2\tilde{b}^rg^*(1-f^*-g^*)} & -\sqrt{2\tilde{p}_1^rf^*g^*} & -\sqrt{2\tilde{p}_2^rf^*g^*+\tilde{d}_2^rg^*} & 0
		\end{bmatrix},
		\] 
		associated to the white noise vector $\xi=[\xi_1,\xi_2,\xi_3,\xi_4]^T$, with $\xi_i$ i.i.d distributed random numbers with zero mean and variance $1/\sqrt{N}$, \cite{gillespie2000chemical},\cite{toral2014stochastic}.
		We focus on the predator density, we compute the Fourier transform of the mean-field density $f(t)$, i.e. $\tilde f(\omega)$, and we derive the power spectrum
		\begin{equation}\label{eq:spectrum2}
			P (\omega) = \left\langle\vert \tilde{f}(\omega)\vert^2\right\rangle =\frac{\Theta + \Lambda \omega^2}{(\omega^2-\Omega_0^2)^2+\Gamma^2\omega^2},
		\end{equation} 
		where $\left\langle\cdot\right\rangle$ is the expected value w.r.t. $\xi$. The parameters $\Theta, \Lambda,\Omega_0,\Gamma$ are defined as follows
		\begin{align*}
			\Theta &= \left\langle[\Psi_{12}(\Phi_{21}\xi_1+\Phi_{22}\xi_2+\Phi_{23}\xi_3)-\Psi_{22}(\Phi_{12}\xi_2+\Phi_{14}\xi_4)]^2\right\rangle
			\cr
			\Lambda &= \left\langle[(\Phi_{12}\xi_2+\Phi_{14}\xi_4]^2\right\rangle,\quad\Omega_0^2=\Psi_{12}\vert\Psi_{21}\vert,\quad \Gamma = \vert\Psi_{22}\vert.
		\end{align*}
	
	Note that $P(\omega)$ resembles the one of a damped harmonic oscillator, \cite{hauer2015nonlinear}.   As shown in Figure \ref{fig:noise} on the left, the solution $f(t)$ of the system of equations \eqref{eq:C1b} without the influence of noise behaves as an under-damped harmonic oscillator. The damped term is $\Gamma<1$ and oscillations decrease exponentially as $\tilde{A}\exp\left (-\frac{\Gamma}{2} t\right)$ where $\tilde{A}$ is the maximum oscillations amplitude. This behavior recalls the one of the mean-field solutions. If we introduce the noise term, we see in Figure \ref{fig:noise} on the right that the oscillations amplitude is influenced both by damped and resonant effects and the oscillations become sustained as the one of the stochastic simulations shown in Section \ref{section4}. Hence the resonant effect is a consequence of the white noise that is not an external factor but it is due to the stochasticity of birth, death and competitions events.
	\begin{figure}[H]
		\centering
		\includegraphics[width=0.49\linewidth]{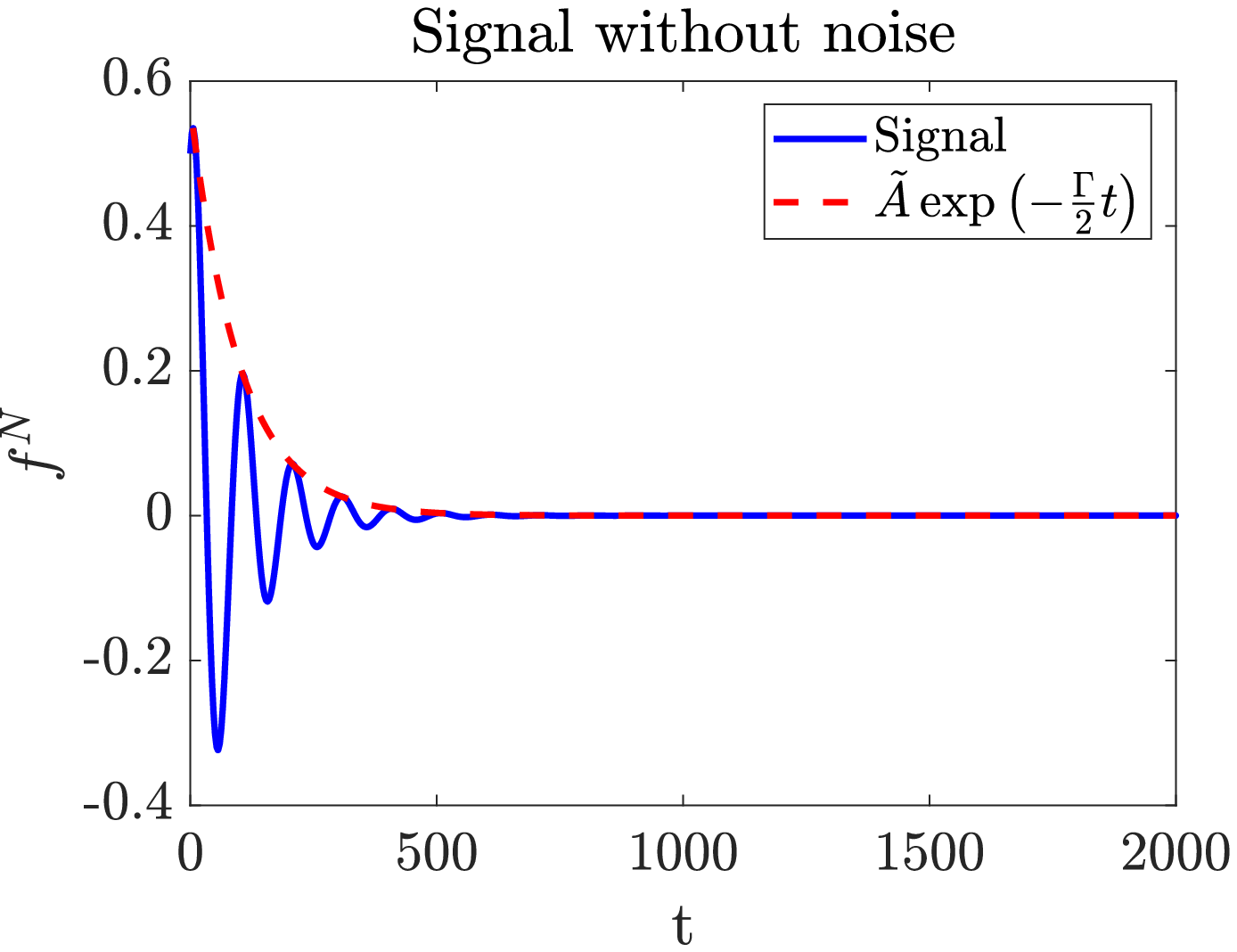}
		\includegraphics[width=0.49\linewidth]{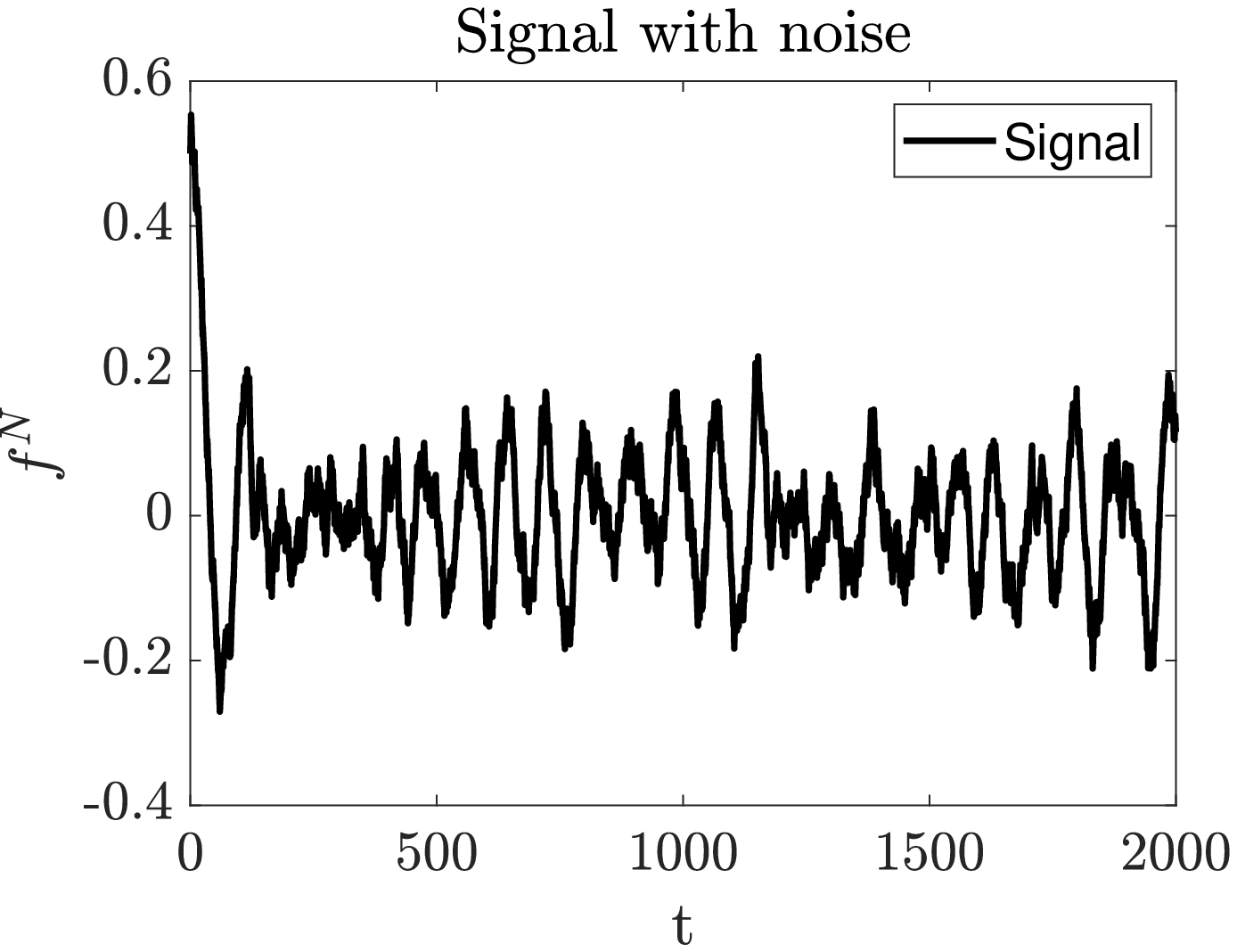}
		\caption{Predators density without noise (on the left) and with noise (on the right) as solution of equations \eqref{eq:C1b}. }
		\label{fig:noise}
	\end{figure}
In Figure	\ref{fig:spectrumH} the power spectrum computed as in equation \eqref{eq:spectrum2} and the one obtained averaging the results of 500 simulations of the processes described in \eqref{eq:1a}-\eqref{eq:1b} computed with the efficient Monte Carlo algorithm and with the direct method for different values of $N$ and for the parameters choice reported in the first line of Table \ref{tab:all_parameters}. Note that the two simulated power spectra agree with the one computed as in equation \eqref{eq:spectrum2}, especially when $N$ is large. Note also that the amplitude of the oscillations decreases as $N$ increases. 
		\begin{figure}[H]
			\centering
			\includegraphics[width=0.49\linewidth]{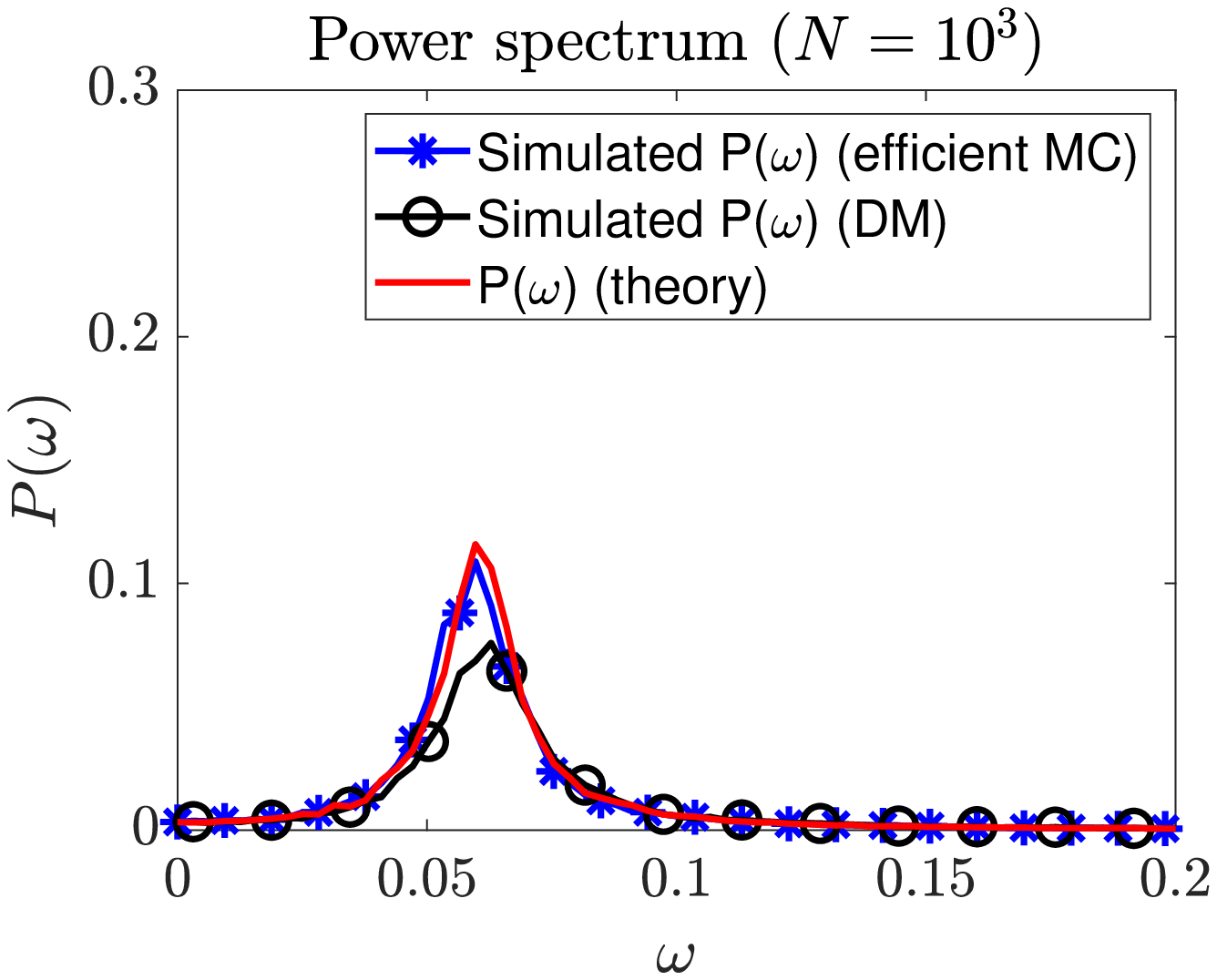}
			\includegraphics[width=0.49\linewidth]{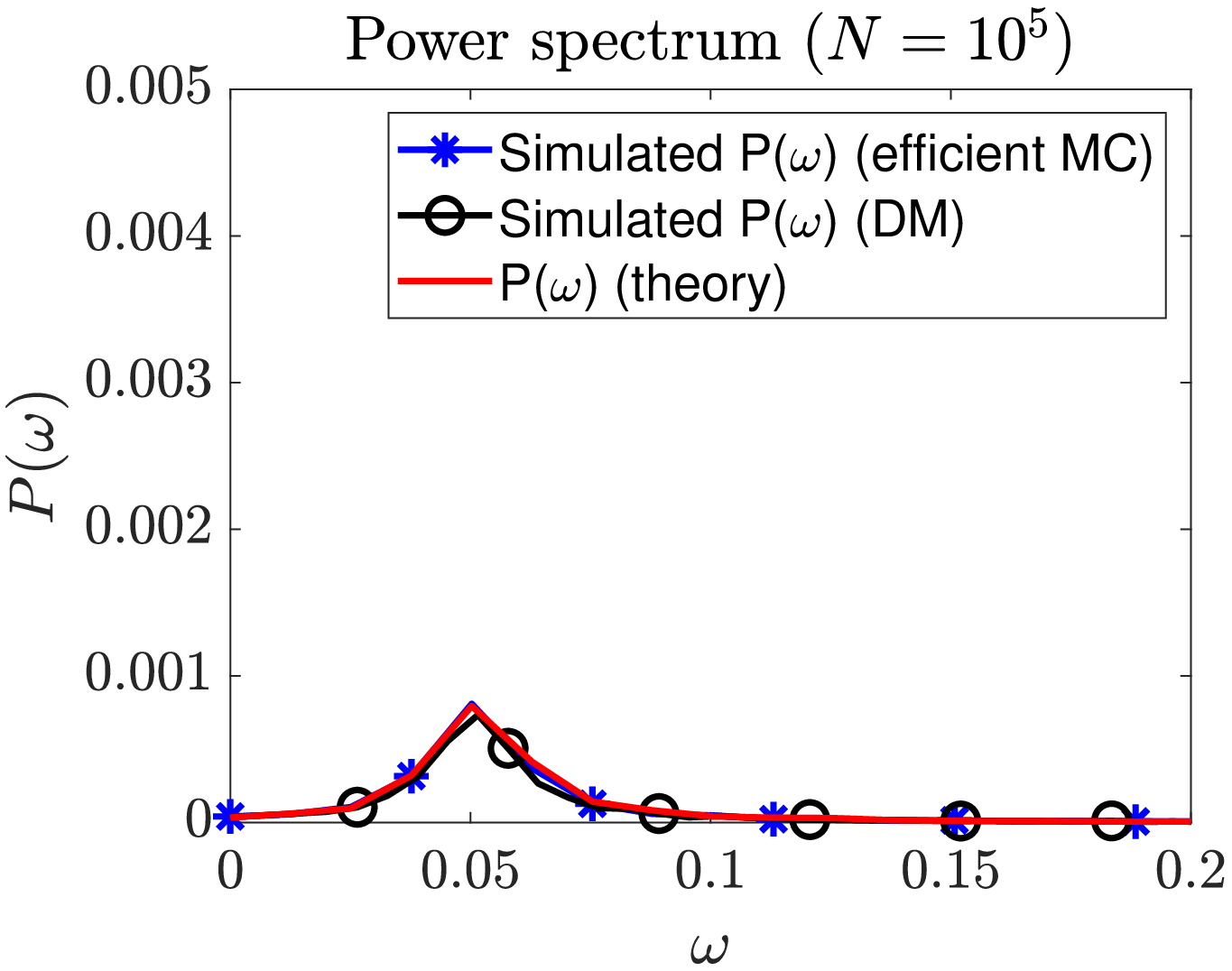}
			\caption{Homogeneous case: power spectrum computed as in equation \eqref{eq:spectrum2} in red, averaging 500 simulations of the processes described in \eqref{eq:1a}-\eqref{eq:1b} computed with the efficient Monte Carlo algorithm in blue and with the direct method in black. On the left $N=10^3$ and on the right $N=10^5$. Markers have been added just to indicate different lines. }
			\label{fig:spectrumH}
		\end{figure}
	
	The same results can be obtained in the heterogeneous case in each cell $C_\ell$, $\ell=1,\ldots,M_c$, as the time $t$ varies. In Figure \ref{fig:figureC2}, we see an example of persistent oscillatory behavior in stochastic simulations compared with the damped one of the mean-field solutions in cell $C_\ell$, $\ell=25,50$ for $N_c=100$, $M_c=100$.   
	\begin{figure}[H]
		\centering
		\includegraphics[width=0.49\linewidth]{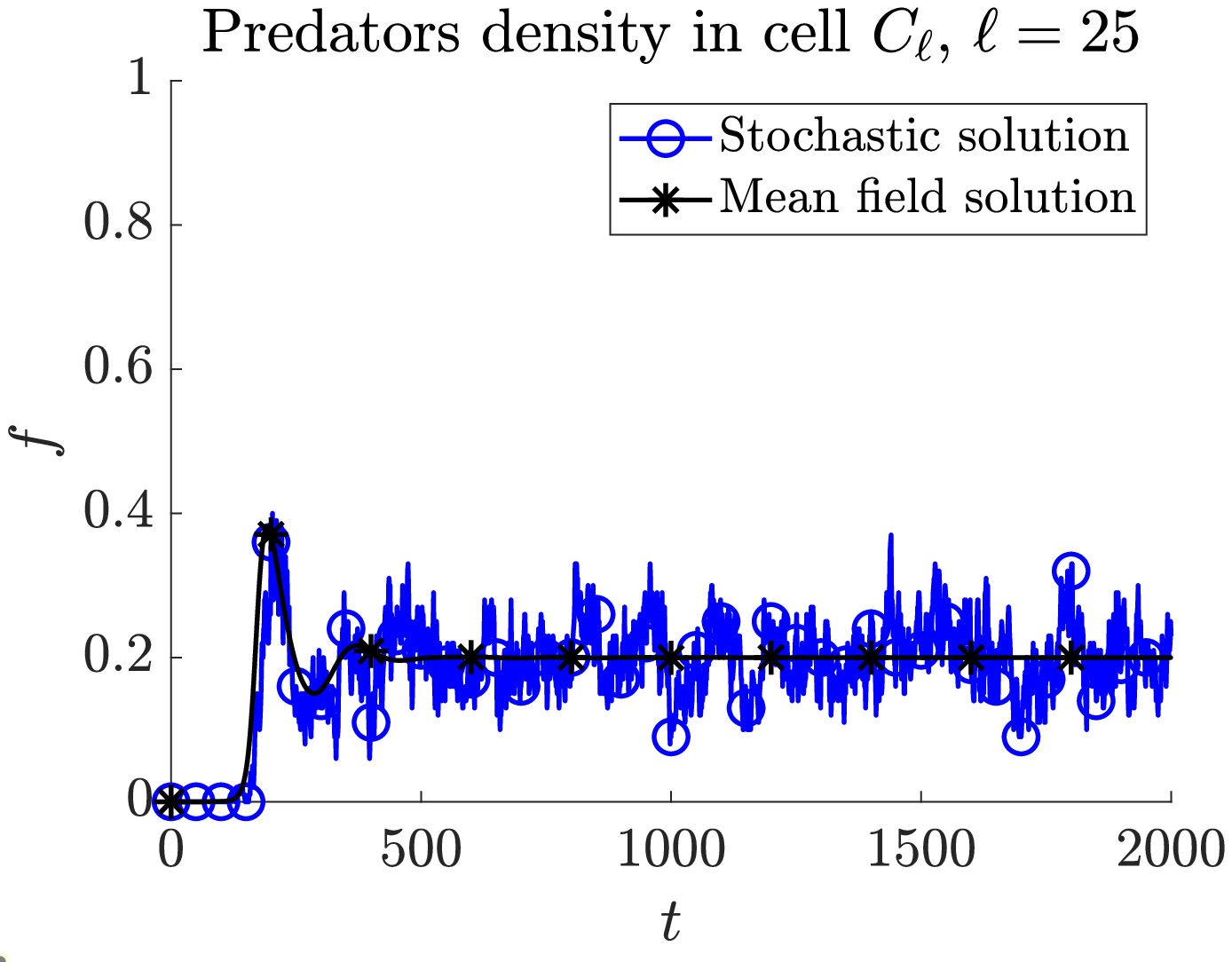}
		\includegraphics[width=0.49\linewidth]{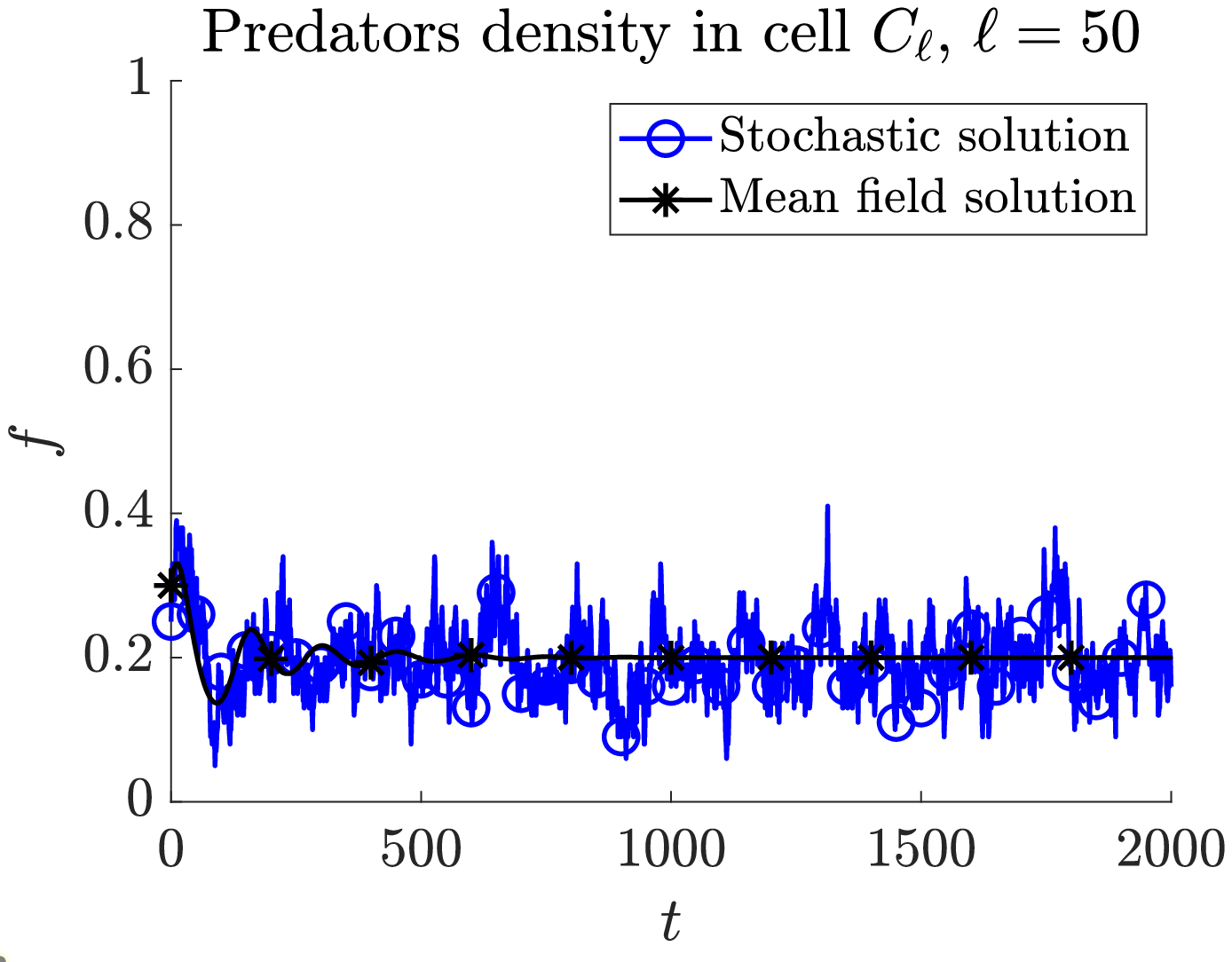}
		\caption{Heterogeneous case: time evolution of the predators density in cell $C_\ell$, $\ell=25,50$. Parameters: $N_c=100$, $b^r=0.1$, $d_1^r=0.1$, $d_2^r=0$, $p_1^r=0.25$, $p_2^r=0.05$, $m_1^r=m_2^r=0.5$, cell $C_\ell$, $\ell=25,50$.   Markers have been added just to indicate different lines.}
		\label{fig:figureC2}
	\end{figure}
	In Figure \ref{fig:spectrumNH} on the left the power spectrum computed averaging the results of 500 simulations of the processes described in \eqref{eq:5a}-\eqref{eq:5b}-\eqref{eq:5c} in cell $C_\ell$, $\ell=25,50$ with the efficient Monte Carlo algorithm for the parameters choice reported in the second line of Table \ref{tab:all_parameters} for $N_c=100$ and $T=2000$.	 On the right, the average between the power spectra computed in cell $C_\ell$, $\ell=1,\ldots,M_c$ obtained with the efficient Monte Carlo algorithm and with the direct method. Note that also in the heterogeneous case the power spectrum recalls the one of a damped harmonic oscillator. Note also that the two average power spectra agree and so that the oscillations that characterize the simulations obtained with the two algorithms are of the same nature.   
		\begin{figure}[H]
			\centering
			\includegraphics[width=0.49\linewidth]{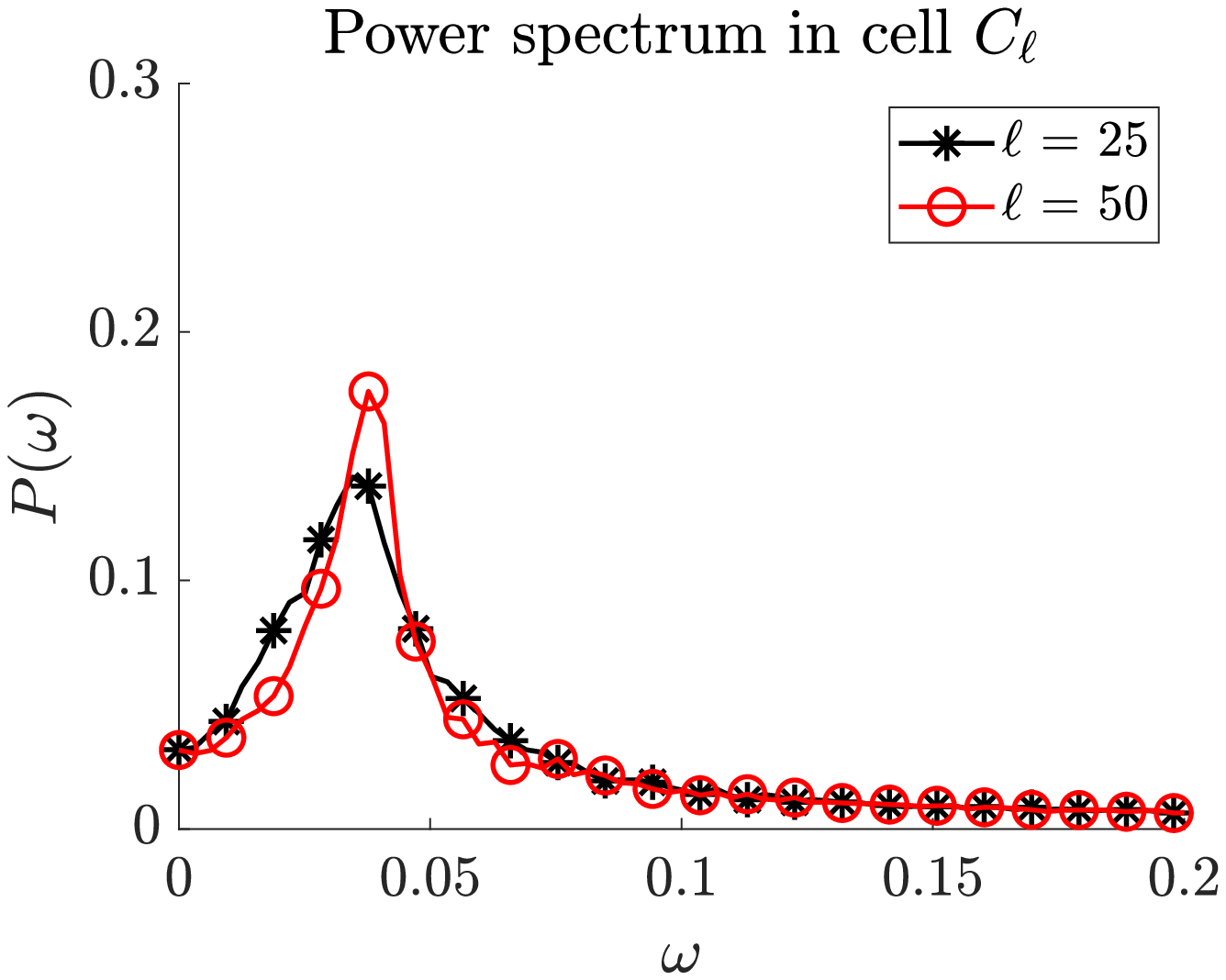}
			\includegraphics[width=0.49\linewidth]{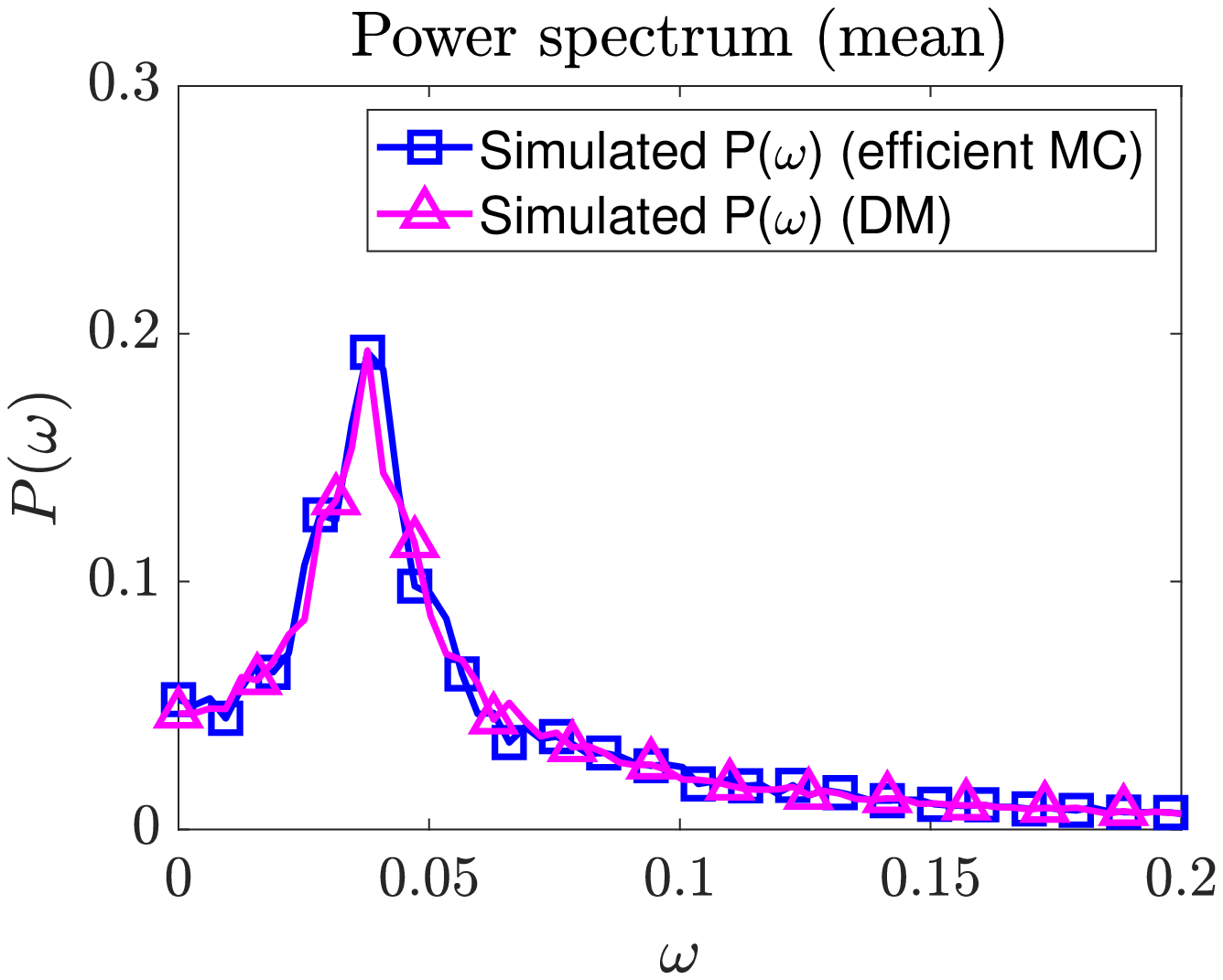}
			\caption{Heterogeneous case: power spectrum. On the left, power spectrum computed averaging 500 simulations of the processes described in \eqref{eq:5a}-\eqref{eq:5b}-\eqref{eq:5c} in cell $C_\ell$, $\ell=25,50$ obtained with the efficient Monte Carlo algorithm. On the right the average between the power spectra computed in cell $C_\ell$, $\ell=1,\ldots,M_c$ obtained with the efficient Monte Carlo algorithm (in blue) and with the direct method (in magenta).  Markers have been added just to indicate different lines.  }
			\label{fig:spectrumNH}
	\end{figure}
	
	\section{Conclusion} Starting from the microscopic level it is possible to derive the master equation that describes the time evolution of the probability of being in a certain state and the mean-field equations that provide an average description of the behavior of the system which is valid only when the sample size $N$ is arbitrary large. Stochastic models instead describe the individuals behavior of interacting species in a more realistic context. However, classic stochastic algorithms have high computational costs, especially when the sample size increases.  In this paper, we have presented a valid efficient stochastic algorithm that, compared to other algorithms, has lower computational costs,and equivalent, or even better performances. First we have proved its consistency providing a reformulation of the model and showing that the mean-field equations are still valid. Then with different numerical experiments we have proved that the efficient Monte Carlo algorithm produces solutions that oscillate around the mean-field ones and that the error between stochastic simulations and mean-field solutions as a function of $N$ is still proportional to $1/\sqrt{N}$. We have also shown which is the main advantage of this new method proving that its computational cost is lower than the one of classical algorithms for a huge set of parameters but its accuracy is preserved.
	In the end, we have compared the oscillatory behavior of the stochastic and mean-field solutions underling the differences in their oscillatory behavior.
		Due to the flexible formulation of the ensemble stochastic algorithm it can be used not only to simulate the microscopic dynamics of interacting biological entities, but further generalized to other agent-based models, with different interaction dynamics and different macroscopic limiting models.

	\appendix
	\section{Spatial homogeneous predator-prey model} \label{app:homo}
	We report here the agent-based  model associated to spatial homogeneous dynamics.
	\paragraph{Agent-based dynamics}
	At each instant of time we select one individual and with probability $\mu\in[0,1]$ we assume that an interaction occurs with another individual, randomly chosen among the remaining $N-1$, according to the following rules
		\begin{equation} \label{eq:1a} 
			\begin{split} 
				&s_Bs_E \xrightarrow{b^r} s_Bs_B,\quad
				s_As_B \xrightarrow{p^r_1} s_As_A, \quad s_As_B \xrightarrow{p^r_2} s_As_E,
		\end{split}\end{equation}
		where $b^r,p^r_1,p^r_2$ are birth, and competition rates, and
		when interaction occurs among two empty components no change is accounted.
		Otherwise, with probability $1-\mu$, we assume that the sampled individual changes according to death events with rates $d^r_1,d^r_2$, as follows
		\begin{equation} \label{eq:1b} 
			\begin{split} 
				s_A \xrightarrow{d^r_1 } s_E,\quad s_B \xrightarrow{d^r_2} s_E.
		\end{split}\end{equation} 
		We suppose that if the selected component is empty then no changes in the populations sizes happen. We assume that at most one birth/competition and one death event can occur at each time step.
	
	We focus on the evolution of the total number of predators and preys considering the transition rates from state $\xx=(A,B,E)$ to the states $\xx+\vv_j$, where $\vv_j$ denotes the $j$-th row of the stoichiometry matrix 
	\begin{equation}\label{eq:stoichH}
		V =\begin{bmatrix}
			0 &  1 & -1 \cr
			1 & -1 & 0 \cr
			0 & -1 & 1 \cr
			-1 & 0 & 1 \cr
			0 & -1 & 1
		\end{bmatrix},
	\end{equation}
	whose columns represent predators, preys and empty components respectively and whose rows represent the changes in the populations sizes due to the birth, competition and death events described in \eqref{eq:1a}-\eqref{eq:1b}.
		For such predator-prey dynamics the transition rates write as follows
		\begin{equation}\label{eq:2}  
			\begin{split}
				&\pi_{\vv_1}(\xx) = 2 \mu b^r \frac{B}{N} \frac{E}{N-1}, \quad \pi_{\vv_2}(\xx)  =  2 \mu p_1^r\frac{A}{N} \frac{B}{N-1},\quad \pi_{\vv_3}(\xx)  = 2\mu p_2^r\frac{A}{N} \frac{B}{N-1},\cr& \pi_{\vv_4}(\xx) = (1-\mu) d_1^r\frac{A}{N},\quad \pi_{\vv_5}(\xx) = (1-\mu) d_2^r\frac{B}{N},	
			\end{split} 
		\end{equation}
		where we define the operators $\pi_{\vv_j}(\xx)=\pi({\bf{x}}+{\bf v}_j|{\bf{x}})$ for any $j = 1,\ldots, M$. Here, $M=5$ denotes the total number of events described in \eqref{eq:1a}-\eqref{eq:1b}.
	In Figure \ref{fig:scheme} we depicted for $A$ predators, $B$ preys and $E$ empty states all possible states $\xx+\vv_j$ after one step of process \eqref{eq:1a}-\eqref{eq:1b}. 
	Neighbor states $\xx+\vv_j$ are connected to state $\xx$ either with outgoing arrows (red) for loss events, or with incoming arrows (blue) for gain events.
	\begin{figure}
		\centering
		\begin{tikzpicture}
			[
			mybox/.style={rectangle, draw=black!60, thick, minimum width=25mm, minimum height=10mm, rounded corners},
			]
			\node[mybox]      (X)                              {$(A,B,E)$};
			\node[mybox]        (R)       [right=of X] {$(A,B+1,E)$};
			\node[mybox]     (L)       [left=of X] {$(A,B-1,E+1)$};
			\node[mybox]         (B)       [below=of X] {$(A+1,B,E-1)$};
			\node[mybox]          (A)       [above=of X] {$(A-1,B,E+1)$};
			\node[mybox]         (BL)       [below=of L] {$(A+1,B-1,E)$};
			\node[mybox]          (RU)       [above=of R] {$(A-1,B+1,E)$};
			\draw[>=latex,->,red,thick]                ([yshift= 5pt] X.east) -- ([yshift= 5pt] R.west);
			\draw[>=latex,->,blue,thick]                ([yshift= -5pt] R.west) -- ([yshift= -5pt] X.east);
			
			\draw[<-, blue, thick] (X.north east) --(RU.south west);
			\draw[->, red, thick] (X.north) --(A.south);
			\draw[<-, blue, thick] (X.south) --(B.north);
			
			\draw[>=latex,->,blue,thick]                ([yshift= 5pt] L.east) -- ([yshift= 5pt] X.west);
			\draw[>=latex,->,red,thick]                ([yshift= -5pt] X.west) -- ([yshift= -5pt] L.east);
			\draw[->, red, thick] (X.south west) --(BL.north east);
		\end{tikzpicture}
		\caption{Neighbor states are connected to state $(A,B,E)$ either with outgoing arrows (red) for loss events, or with incoming arrows (blue) for gain events. }\label{fig:scheme}
	\end{figure}
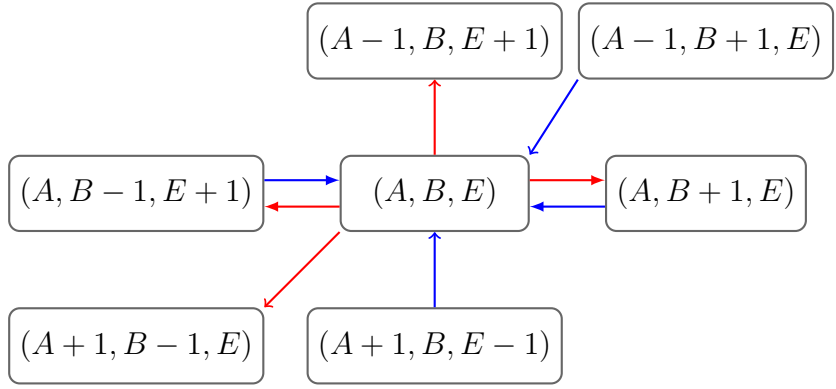
	Given the probability $P(\xx,t)$ to be in the state $\xx$ with $A$ predators, $B$ preys and $E$ empty spaces at time $t$, its time evolution is governed by the associated master equation as follows
	\begin{equation}\label{eq:master}
		\frac{d P({\bf x},t)}{dt} =\sum_{j=1}^M \Biggl[ (\mathcal E^{\vv_j}-1) \left( P({\bf x},t)   \pi({\bf x}+{\bf v}_j|\xx)\right) \Biggr ],
	\end{equation}
	where the step operator $\mathcal E^{\vv}$ acts on an arbitrary function $f$ according to 
	\begin{equation}\label{eq:step_operator}
		\mathcal E^{\vv_j}\left( f(\xx)\right) = f(\xx-\vv_j).
	\end{equation}

		\paragraph{Mean-field type approximations}
		In analogy to Section \ref{section2}, the spatial homogeneous mean-field approximation \eqref{eq:6} can be derived from the master equation \eqref{eq:master} introducing the empirical densities $f^N,g^N$ as in \eqref{eq:empirical}.
		A different approximation is obtained considering a Taylor expansion of \eqref{eq:step_operator} up to second order. In this framework, the master equation \eqref{eq:master} is approximated by the following Fokker-Planck equation 
		\begin{equation}\label{eq:fokker}
			\begin{aligned}
				&\frac{d P({\bf x},t)}{dt} =-\sum_{i=1}^3 \partial_{x_i}\Biggl(  \sum_{j=1}^{M-1} A_{ij}(\xx)P(\xx,t) \Biggr)+\frac{1}{2} \sum_{i=1}^3\partial_{x_ix_i}\Biggl(  \sum_{j=1}^{M-1} D_{ij}(\xx)P(\xx,t) \Biggr)+\\
				&\qquad\qquad\quad \sum_{i=1}^3 \sum_{k=1 \atop k \neq i}^3 \partial_{x_i x_k}\Biggl(  \sum_{j=1}^{M-1} C_{ikj}(\xx)P(\xx,t) \Biggr),
			\end{aligned}
		\end{equation}
		where \begin{equation}\label{eq:A8}
			\begin{split}
				&A_{ij} (\xx) =\vv_j^i \Pi_{\vv_j}(\xx), \quad D_{ij}(\xx)= (\vv_j^i)^2 \Pi_{\vv_j}(\xx),\quad C_{ikj} (\xx) =  \vv_j^i\vv_j^k \Pi_{\vv_j}(\xx).
			\end{split}
		\end{equation} Here $\vv_j^i$ denotes the $(i,j)$ component of the stoichiometry matrix $V$ and 
		$\Pi_{\vv_l}(\xx) = \pi_{\vv_l}(\xx)$ for $l\in \{1,2,4\}$ and $\Pi_{\vv_3}(\xx) = \pi_{\vv_3}(\xx) + \pi_{\vv_5}(\xx)$.
		As stated in \cite{gillespie2000chemical}, \cite{toral2014stochastic}, \cite{gillespie1996multivariate}, equation \eqref{eq:fokker} is the Fokker-Planck equation associated to the Langevin equations
		\begin{equation}\label{eq:langevin_general}
			\frac{dx_i}{dt} = \sum_{j=1}^{M-1} \left( A_{ij}(\xx)+ \sqrt{ D_{ij}(\xx)} \xi_j\right),
		\end{equation}
		for $i=1,\ldots,3$, where  $\xi_j$, $j=1,\ldots,M-1$ are normally distributed random numbers with zero mean and variance equal to $1/\sqrt{N}$ . Hence, multiplying both sides of equation \eqref{eq:langevin_general} by $P(\xx,t)/N$ and summing over all the values of $x_i$, $i=1,\ldots,3$  we can write 
		\begin{equation}\label{eq:langevin}
			\begin{aligned}
				\frac{df^N}{d\tau} &= (\beta  g^N -\delta)f^N +\sqrt{2\tilde{p}_1^r f^N g^N} \xi_2 -\sqrt{\tilde{d}_1^r f^N} \xi_4,\cr
				\frac{dg^N}{d\tau} &= r g^N \left( 1-\frac{g^N}{K} \right) -\alpha f^N g^N+\sqrt{2\tilde{b}^r g^N (1-f^N-g^N)}\xi_1 \cr
				&\qquad\qquad\qquad-\sqrt{2\tilde{p}_2^r f^N g^N +\tilde{d}_2^r} \xi_3 -\sqrt{2\tilde{p}_1^r f^N g^N} \xi_2,
			\end{aligned}
		\end{equation}
		where we have substituted the value $A_{ij}$ and $D_{ij}$ defined in \eqref{eq:A8}. Time is scaled according to $\tau = t/N$, and parameters are defined as $\beta = 2\mu p^r_1$, $\delta =(1-\mu) d^r_1$, $\alpha = 2\mu(p^r_1+p^r_2+b^r)$,$r=2\mu {b^r}-(1-\mu){d^r_2}$, and $K=1-\frac{(1-\mu){d^r_2}}{2\mu{b^r}}$. 
		Note that in the limit $N\to \infty$ the Langevin equations in \eqref{eq:langevin} collapse to the spatial homogeneous mean-field equations \eqref{eq:6}.

		\section{Monte Carlo algorithms for agent-based dynamics}\label{sec:algH}
		We report some of the classic stochastic algorithms for agent-based models with predator-prey interaction without spatial interaction. We refer to \cite{gillespie1976general, gillespie2007stochastic, chen2016non} for classical approaches, and to \cite{marchetti2017simulation} for a comprehensive collection on stochastic algorithms for agent-based dynamics. 	
		\paragraph{Direct method} 
		The main idea of the direct method is to estimate which is the next firing event $j$ and at which time $\tau$ it will occur from the probability density function
			\begin{equation}\label{eq:pdf}
				p(\tau,j|\xx,t) = a_{j}(\xx) e^{-a_0(\xx)\tau}.
			\end{equation}
			Here $a_{j}(\xx) = N \pi(\xx+\vv_{j}|\xx)$ 
			represents the propensity of the event $j$ and $a_0$ is the sum of all the propensities.
		At each time $t$ suppose to split the probability density function defined in equation \eqref{eq:pdf} into the product of two probability functions, one for the firing time $\tau$ and the other for the event $j$ that occurs at time $t+\tau$, as follows 
		\begin{equation}
			p(\tau,j|\xx,t) = p_1(\tau|\xx,t) p_2(j|\tau,\xx,t),
		\end{equation}
		where 
		\begin{equation}\label{eq:pdf1}
			p_1(\tau|\xx,t) = a_0(\xx) e^{-a_0(\xx) \tau}, \quad p_2(j|\tau,\xx,t) = \frac{a_{j}(\xx)}{a_0(\xx)}.
		\end{equation}
		Consider two uniformly distributed random numbers $r_1,r_2$ $\sim U(0,1)$, the index of the next firing event $j$ can be computed as the smallest index such that
		\begin{equation}\label{eq:DMindex} 
			\sum_{k=1}^{j} a_k(\xx) \geq r_1 a_0(\xx).
		\end{equation}
		The time in which the next event happens can be computed from the first equation in \eqref{eq:pdf1} as 
		\begin{equation}\label{eq:DMtime}
			\tau = \frac{1}{a_0(\xx)} \ln(r_2^{-1}).
		\end{equation}
		Algorithm \ref{alg_DM} outlines the details of the direct method. 
		\begin{alg}[Direct method]~ \label{alg_DM}
			\begin{enumerate}
				\item[\texttt 1.] Define the stoichiometry matrix $V$, the initial state $\xx$, the initial time $t=0$ and the final time $T$.
				\item[\texttt 2.] \texttt{while} $t<T$ 
				\begin{enumerate}
					\item Compute the propensities $ a_j$, $j=1,\ldots, M$.
					\item Consider two uniformly distributed random numbers $r_1,r_2$ $\sim U(0,1)$.
					\item Select the index of the next firing event as in equation \ref{eq:DMindex}.
					\item Compute the time in which the next event fires as in equation \ref{eq:DMtime}.
					\item Set $\xx=\xx+\vv_j$ where $\vv_j$ is the $j$-th row of the stoichiometry matrix $V$ defined as in equation \ref{eq:stoichH}.
					\item Set $t \leftarrow t+\tau$.
				\end{enumerate}
				\texttt{repeat}
			\end{enumerate}
		\end{alg}
		\paragraph{Classic Monte Carlo algorithm}
	The classic Monte Carlo method is an approximated stochastic algorithm. It allows multiple events to occur at the same time step that is supposed to be inversely proportional to the sample size $N$. Its computational costs increase as the sample size increases and can be comparable with the ones of exact algorithms. The main idea of the classic Monte Carlo algorithm is the following.
		Suppose to divide a priori the time interval considering a constant time step $\tau/N$. At each time $t$ select two components and assume that with probability $\mu \in [0,1]$ a birth or a competition event can occur. If the two selected components are
		\begin{itemize}
			\item a prey and an empty space: with probability $b_r \tau$ a new prey born and occupies the empty space;
			\item a predator and a prey: with probability $p_1^r \tau$ a new predator born and with probability $p_2^r \tau$ the prey dies and a new empty space is added to the system.
		\end{itemize}
		Extract another component and assume that with probability $1-\mu$ a death event can occur. If the selected component is occupied by a predator then with probability $d_1^r\tau$ the predator dies and if it is occupied by a prey then with probability $d_2^r\tau$ the prey dies. 
		Algorithm \ref{alg_MC} defines the details of the classic Monte Carlo algorithm. 
		\begin{alg}[Classic Monte Carlo algorithm]~ \label{alg_MC}
			\begin{enumerate}
				\item[\texttt 1.] Define the sample, the initial time $t=0$, the final time $T$, the time step as $\tau/N$ and a parameter $\mu \in [0,1]$.   
				\item[\texttt 2.] \texttt{while} $t<T$ 
				\begin{enumerate}
					\item Select two components inside the sample.
					\item Assume that with probability $\mu$ birth and competition events happen \[
					BE\xrightarrow{b}BB,\quad AB\xrightarrow{p_1} AA, \quad AB \xrightarrow{p_2}AE,
					\]
					where $b=b^r\tau$, $p_1^r\tau$ and $p_2^r\tau$.
					\item Select another component in the sample.
					\item Assume that with probability $1-\mu$ death events happen \[
					A\xrightarrow{d_1} E, \quad B \xrightarrow{d_2}E,
					\]
					with  $d_1=d_1^r\tau$ and $d_2=d_2^r\tau$.
					\item Update the sample.
					\item Set $t \leftarrow t+\tau/N$.
				\end{enumerate}
				\texttt{repeat}
			\end{enumerate}
		\end{alg}
		\paragraph{$\tau$-leaping method} The $\tau$-leaping method is another approximated stochastic algorithm. Its main idea is to assume that multiple events can happen in the same time interval that is suppose to vary in time. Therefore this algorithm, as the Monte Carlo and the direct method, has to cope with high computational costs. In the following we will describe more in details how the $\tau$-leaping algorithm works. Assume that more than one event can fire simultaneously at any time step. The probability that $k_j$ events fires in the time interval $(t,t+\tau)$ follows a Poisson distribution of parameter $a_j(\xx) \tau$ where $a_j(\xx)$ is the propensity of event $j$ and $\tau$ the time step. The number of firing events $k_j$ is generated by sampling its corresponding Poisson distribution, $Poi(a_j(\xx) \tau)$.  The state can be updated according to the following rule
		\begin{equation}\label{eq:leaping}
			\xx (t+\tau)= \xx (t)+ \sum_{j=1}^M k_j \vv_j = \xx + \sum_{j=1}^M Poi(a_j(\xx) \tau)\vv_j.
		\end{equation}
		The time step $\tau$ is updated according to a leap selection:
		\begin{itemize}
			\item choose an initial time step $\tau $;
			\item let $0<  \epsilon < 1$ and until $\Delta a_j(\xx) :=     \vert a_j(\xx(t+\tau))-a_j(\xx(\tau)) \vert >\epsilon$ set $\tau = \tau/2$. 
		\end{itemize}
		Algorithm \ref{alg_tau} outlines the details of the $\tau$-leaping method. 
		\begin{alg}[$\tau$ - leaping method]~ \label{alg_tau}
			\begin{enumerate}
				\item[\texttt 1.] Define the stoichiometry matrix $V$, the initial state $\xx$, the initial time $t=0$ and the final time $T$, the time step $\tau$.
				\item[\texttt 2.] \texttt{while} $t<T$ 
				\begin{enumerate}
					\item Compute the propensities $ a_j$, $j=1,\ldots, M$.
					\item Consider $M$ uniformly distributed random numbers $k_1,\ldots , k_M$ $\sim Poi(a_j(\xx)\tau)$.
					\item Update the state as in equation \eqref{eq:leaping}.
					\item Choose $\tau$ with leap selection.
					\item Set $t \leftarrow t+\tau$.
				\end{enumerate}
				\texttt{repeat}
			\end{enumerate}
		\end{alg}

		\section*{Acknowledgment} GA and FF would like to thank the Italian Ministry of Instruction, University and Research (MIUR) to support this research with funds coming from PRIN Project 2017 (No. 2017KKJP4X entitled “Innovative numerical methods for evolutionary partial differential equations and applications”).
	\bibliographystyle{abbrv}
	\bibliography{biblio}

\begin{thebibliography}{10}

\bibitem{albi2019vehicular}
G.~Albi, N.~Bellomo, L.~Fermo, S.-Y. Ha, J.~Kim, L.~Pareschi, D.~Poyato, and
  J.~Soler.
\newblock Vehicular traffic, crowds, and swarms: From kinetic theory and
  multiscale methods to applications and research perspectives.
\newblock {\em Mathematical Models and Methods in Applied Sciences},
  29(10):1901--2005, 2019.

\bibitem{allen2008asymptotic}
L.~J. Allen, B.~M. Bolker, Y.~Lou, and A.~L. Nevai.
\newblock Asymptotic profiles of the steady states for an sis epidemic
  reaction-diffusion model.
\newblock {\em Discrete \& Continuous Dynamical Systems}, 21(1):1, 2008.

\bibitem{berryman2002population}
A.~Berryman.
\newblock {\em Population cycles: the case for trophic interactions}.
\newblock Oxford University Press, 2002.

\bibitem{black2012}
A.~J. Black and A.~J. McKane.
\newblock Stochastic formulation of ecological models and their applications.
\newblock {\em Trends in ecology \& evolution}, 27(6):337--345, 2012.

\bibitem{Blasius2020}
B.~Blasius, L.~Rudolf, G.~Withoff, U.~Gaedke, and G.~F. Fussmann.
\newblock Long-term cyclic persistence in an experimental predator-prey system.
\newblock {\em Nature}, 577:226--230, 2020.

\bibitem{boscheri2021modeling}
W.~Boscheri, G.~Dimarco, and L.~Pareschi.
\newblock Modeling and simulating the spatial spread of an epidemic through
  multiscale kinetic transport equations.
\newblock {\em Mathematical Models and Methods in Applied Sciences}, pages
  1--39, 2021.

\bibitem{caflisch1998monte}
R.~E. Caflisch.
\newblock Monte {C}arlo and quasi-{M}onte {C}arlo methods.
\newblock {\em Acta numerica}, 7:1--49, 1998.

\bibitem{carrillo2018zoology}
J.~A. Carrillo, Y.~Huang, and M.~Schmidtchen.
\newblock Zoology of a nonlocal cross-diffusion model for two species.
\newblock {\em SIAM Journal on Applied Mathematics}, 78(2):1078--1104, 2018.

\bibitem{chen2016non}
S.~Chen and U.~C. T{\"a}uber.
\newblock Non-equilibrium relaxation in a stochastic lattice
  {L}otka--{V}olterra model.
\newblock {\em Physical biology}, 13(2):025005, 2016.

\bibitem{cristiani2014multiscale}
E.~Cristiani, B.~Piccoli, and A.~Tosin.
\newblock {\em Multiscale modeling of pedestrian dynamics}, volume~12.
\newblock Springer, 2014.

\bibitem{duncan2016hybrid}
A.~Duncan, R.~Erban, and K.~Zygalakis.
\newblock Hybrid framework for the simulation of stochastic chemical kinetics.
\newblock {\em Journal of Computational Physics}, 326:398--419, 2016.

\bibitem{engblom2009simulation}
S.~Engblom, L.~Ferm, A.~Hellander, and P.~L{\"o}tstedt.
\newblock Simulation of stochastic reaction-diffusion processes on unstructured
  meshes.
\newblock {\em SIAM Journal on Scientific Computing}, 31(3):1774--1797, 2009.

\bibitem{flegg2015convergence}
M.~B. Flegg, S.~Hellander, and R.~Erban.
\newblock Convergence of methods for coupling of microscopic and mesoscopic
  reaction--diffusion simulations.
\newblock {\em Journal of computational physics}, 289:1--17, 2015.

\bibitem{friedman2017ecological}
J.~Friedman and J.~Gore.
\newblock Ecological systems biology: The dynamics of interacting populations.
\newblock {\em Current Opinion in Systems Biology}, 1:114--121, 2017.

\bibitem{gillespie1976general}
D.~T. Gillespie.
\newblock A general method for numerically simulating the stochastic time
  evolution of coupled chemical reactions.
\newblock {\em Journal of computational physics}, 22(4):403--434, 1976.

\bibitem{gillespie1996multivariate}
D.~T. Gillespie.
\newblock The multivariate langevin and fokker--planck equations.
\newblock {\em American Journal of Physics}, 64(10):1246--1257, 1996.

\bibitem{gillespie2000chemical}
D.~T. Gillespie.
\newblock The chemical {L}angevin equation.
\newblock {\em The Journal of Chemical Physics}, 113(1):297--306, 2000.

\bibitem{gillespie2007stochastic}
D.~T. Gillespie.
\newblock Stochastic simulation of chemical kinetics.
\newblock {\em Annu. Rev. Phys. Chem.}, 58:35--55, 2007.

\bibitem{hauer2015nonlinear}
B.~Hauer, J.~Maciejko, and J.~Davis.
\newblock Nonlinear power spectral densities for the harmonic oscillator.
\newblock {\em Annals of Physics}, 361:148--183, 2015.

\bibitem{hellander2007hybrid}
A.~Hellander and P.~L{\"o}tstedt.
\newblock Hybrid method for the chemical master equation.
\newblock {\em Journal of Computational Physics}, 227(1):100--122, 2007.

\bibitem{hellander2017mesoscopic}
S.~Hellander, A.~Hellander, and L.~Petzold.
\newblock Mesoscopic-microscopic spatial stochastic simulation with automatic
  system partitioning.
\newblock {\em The Journal of chemical physics}, 147(23):234101.

\bibitem{hundsdorfer2013numerical}
W.~Hundsdorfer and J.~G. Verwer.
\newblock {\em Numerical solution of time-dependent
  advection-diffusion-reaction equations}, volume~33.
\newblock Springer Science \& Business Media, 2013.

\bibitem{marchetti2017simulation}
L.~Marchetti, C.~Priami, and V.~H. Thanh.
\newblock {\em Simulation algorithms for computational systems biology}.
\newblock Springer, 2017.

\bibitem{MATLAB:2017}
The Mathworks, Inc., Natick, Massachusetts.
\newblock {\em MATLAB version 9.3.0.713579 (R2017b)}, 2017.

\bibitem{maynard1974models}
S.~J. Maynard.
\newblock Models in ecology.
\newblock {\em Cambridge UP, Cambridge}, 1974.

\bibitem{mckane2004stochastic}
A.~J. McKane and T.~J. Newman.
\newblock Stochastic models in population biology and their deterministic
  analogs.
\newblock {\em Physical Review E}, 70(4):041902, 2004.

\bibitem{mckane2005predator}
A.~J. McKane and T.~J. Newman.
\newblock Predator-prey cycles from resonant amplification of demographic
  stochasticity.
\newblock {\em Physical review letters}, 94(21):218102, 2005.

\bibitem{murray2007mathematical}
J.~D. Murray.
\newblock {\em Mathematical biology: I. An introduction}, volume~17.
\newblock Springer Science \& Business Media, 2007.

\bibitem{nisbet2003modelling}
R.~M. Nisbet and W.~Gurney.
\newblock {\em Modelling fluctuating populations: reprint of first Edition
  (1982)}.
\newblock 2003.

\bibitem{pareschi2013interacting}
L.~Pareschi and G.~Toscani.
\newblock {\em Interacting multiagent systems: kinetic equations and Monte
  Carlo methods}.
\newblock OUP Oxford, 2013.

\bibitem{pielou1969introduction}
E.~C. Pielou et~al.
\newblock An introduction to mathematical ecology.
\newblock {\em An introduction to mathematical ecology.}, 1969.

\bibitem{renshaw1993modelling}
E.~Renshaw.
\newblock {\em Modelling biological populations in space and time}, volume~11.
\newblock Cambridge University Press, 1993.

\bibitem{roose2007mathematical}
T.~Roose, S.~J. Chapman, and P.~K. Maini.
\newblock Mathematical models of avascular tumor growth.
\newblock {\em SIAM review}, 49(2):179--208, 2007.

\bibitem{shigesada1980spatial}
N.~Shigesada.
\newblock Spatial distribution of dispersing animals.
\newblock {\em Journal of mathematical biology}, 9(1):85--96, 1980.

\bibitem{spill2015hybrid}
F.~Spill, P.~Guerrero, T.~Alarcon, P.~K. Maini, and H.~Byrne.
\newblock Hybrid approaches for multiple-species stochastic reaction--diffusion
  models.
\newblock {\em Journal of computational physics}, 299:429--445, 2015.

\bibitem{toral2014stochastic}
R.~Toral and P.~Colet.
\newblock {\em Stochastic numerical methods: an introduction for students and
  scientists}.
\newblock John Wiley \& Sons, 2014.

\bibitem{van1992stochastic}
N.~G. Van~Kampen.
\newblock {\em Stochastic processes in physics and chemistry}, volume~1.
\newblock Elsevier, 1992.

\bibitem{webb1981reaction}
G.~Webb.
\newblock A reaction-diffusion model for a deterministic diffusive epidemic.
\newblock {\em Journal of Mathematical Analysis and Applications},
  84(1):150--161, 1981.

\end{thebibliography}

\end{document}